\newtheorem{definition}{Definition}
\newtheorem{theorem}{Theorem}
\newtheorem{Remark}{Remark}\newtheorem{proposition}{Proposition}
\newtheorem{corollary}{Corollary}[theorem]
\newtheorem{lemma}{Lemma}
\newenvironment{system}{\left\lbrace\begin{array}{@{}l@{}}}{\end{array}\right.}
\newcommand{\e}{\varepsilon}
\newcommand{\ue}{u^\e}
\renewcommand{\div}{\operatorname{div}}
\renewcommand{\theta}{\vartheta} 
\renewcommand{\phi}{\varphi}
\DeclareMathOperator{\tr}{tr}
\providecommand{\tr}{\mathrm{tr}}
\providecommand{\nltwo}[2][]{|| #1 ||_{#2}}
\newcommand{\N}{{\mathbb{N}}}
\newcommand{\R}{{\mathbb{R}}}
\providecommand{\pt}{\partial_3}
\providecommand{\pa}{\partial_\alpha}
\providecommand{\pab}{\partial_{\alpha\beta}}
\providecommand{\ua}{u_{\alpha}}
\providecommand{\va}{v{_\alpha}}
\providecommand{\vt}{v{_3}}
\providecommand{\eab}{e_{\alpha\beta}}
\providecommand{\eaa}{e_{\alpha\alpha}}
\providecommand{\ett}{e_{33}}
\providecommand{\eat}{e_{\alpha 3}}
\let\wto\rightharpoonup
\renewcommand{\O}{\Omega}
\renewcommand{\o}{\omega}
 \let\e\varepsilon
\providecommand{\eab}{e_{\alpha\beta}}
\providecommand{\eaa}{e_{\alpha\alpha}}
\providecommand{\eat}{e_{\alpha 3}}
\providecommand{\ett}{e_{33}}
\providecommand{\eat}{e_{\alpha 3}}
\providecommand{\eit}{e_{i 3}}
\providecommand{\Qab}{Q_{\alpha\beta}}
\providecommand{\Qtt}{Q_{33}}
\providecommand{\Qat}{Q_{\alpha3}}
\providecommand{\ue}{u^\e}
\providecommand{\uea}{\ue_\alpha}
\providecommand{\uet}{\ue_3}
\providecommand{\ut}{{u_3}} 
\providecommand{\ve}{v^\e}
\providecommand{\Qtt}{Q_{33}}
\newcommand{\Of}{{\O_f}}
\newcommand{\Ob}{{\O_b}}
\newcommand{\Oef}{{\O^\e_f}}
\newcommand{\Oeb}{{\O^\e_b}}
 \renewcommand{\Ob}{{\Omega_{b}}}
\renewcommand{\Oeb}{{\Omega^\e_{b}}}
\title{Gamma-convergence results for nematic elastomer bilayers: relaxation and actuation}
\author{Pierluigi Cesana, Andrés A. León Baldelli}
\begin{document}

\ifpdf
\DeclareGraphicsExtensions{.pdf, .jpg, .tif}
\else
\DeclareGraphicsExtensions{.eps, .jpg}
\fi

\maketitle

\begin{abstract}
We compute effective energies of thin bilayer structures composed of soft nematic elastic liquid crystals in various geometrical regimes and functional configurations.
Our focus is on elastic foundations composed {of} an isotropic layer attached to a nematic substrate where order-strain interaction results in complex opto-mechanical instabilities  activated via coupling through the common interface.
We compute Gamma-limits  for vanishing thickness in two main scaling regimes respectively exhibiting spontaneous stress relaxation and shape-morphing, allowing in both cases out-of-plane displacements. This extends the plane strain modelling of \cite{cesana2018variational}, and shows the asymptotic emergence of fully coupled macroscopic active-nematic foundations.
Subsequently, we focus on actuation and compute asymptotic configurations of an active plate on nematic foundation interacting with an applied electric field. From the analytical standpoint, the presence of an electric field and its associated electrostatic work turns the total energy  non-convex and non-coercive.
We show that equilibrium solutions are min-max points of the system, that min-maximising sequences pass to the limit and, that the limit system can exert mechanical work under applied electric fields.
 \end{abstract}
 
\paragraph{Keywords.}
$\Gamma$-convergence; order tensor; linearised elasticity; nematic elastomers; dimension reduction.

\tableofcontents

\section{Introduction}

are classes of soft shape-memory alloys where order states and optical instabilities can be triggered, tuned, or suppressed by means of mechanical deformations and electrostatic fields. 
NLCEs, typically synthesised as thin strips, appear in the form of gels or rubbery solids. 
Structurally, they are constituted by polymeric chains which act as the 
backbone of the material, to which attach molecules of a nematic liquid crystal, the optically active units. 
Liquid crystal molecules have a two-fold response to stimuli.
Indeed, they re-orient 
{parallel to a common direction (identified by a unit vector $n$ called director)}
as a consequence of elastic deformations and 
stretches dictated by \emph{internal energy} minimisation and they rotate collectively parallel to electric or magnetic forces,
activated by \textit{external fields}.

We are interested in testing and analysing the interaction between elastic, optic, and electrostatic forces (characterising the \textit{material} behaviour) and  geometric constraints (which cause \textit{structural} instabilities) in two distinct physical regimes relevant for technological applications.

Continuum modelling of nematic elastomers based on the Gaussian approximation of the distribution of polymer chains traces back to the work of \cite{BTW93}, \cite{WT03}. Subsequently, such mechanical models have been cast in a variational framework within the context of non-convex minimisation in nonlinear elasticity, see \cite{DeS99}, \cite{desimone2002macroscopic}.
{In the present contribution we focus on linearised elasticity. }
Despite intrinsic limitations of an approach with infinitesimal displacements, a linearised model is particularly suitable for treating the coupling of multiphysical phenomena, by superposition.
We refer to \cite{DST09} and \cite{ADS11} for a discussion on the relationship between the linearised model adopted here and
the classical nonlinear theory.
{In our setting, the twofold multiphyisical interaction pertains to the interplay between nematic ordering and elasticity  and to the opto-electric interaction. While the former effect is caused by the coupling of polymeric chains with LC optic states, the latter is the connection between the liquid crystal and the dielectric vector field.}

We consider an energy model introduced in \cite{cesana2009strain-order} for the description of equilibrium states of NLCEs under an electric field  to study the asymptotic behaviour of sequences of functionals for bi-layer structures where a NLCE membrane sustains a stiff and thin isotropic film.

Two non-dimensional quantities (interpreted as length scale ratios) collapse several material and geometric parameters identifying two opposite phenomenological regimes: that of thin films of large planar extent (which we call the ``large bodies''), and that of thin films of small area (referred to as ``thin particles''). We follow  the terminology and modelling philosophy introduced in~\cite{de-simone1993energy}.

A number of contributions have appeared in recent years on the mathematical modelling of {effective} thin nematic elastomer structures in various geometries and regimes.
Mechanical relaxation and formation of microstructure are analysed in semi-soft planar membranes in~\cite{Conti2002PRE},\cite{conti2002soft}, and in relation to opto-mechanical wrinkling in~\cite{cesana2015effective}.
In \cite{ADS20} the rich order-stretch interaction is studied for nonlinear plate models of thin nematic monolayers, showing that the asymptotic elastic behaviour dramatically depends on the morphology of nematic textures.
{In the same spirit,} one-dimensional finite elasticity models of NLCE ribbons are derived in~\cite{ADSK17} and~\cite{ADS17}, 
showing that imprinted LC director arrangements lead to nontrivial shape designs through  spontaneous  activation of flexural and torsional stretches.

The main contribution in this paper is the derivation, the analysis, and the computation of effective two-dimensional plate models for multilayer structures comprising active soft nematic plates, in the regimes of spontaneous relaxation and shape-morphing actuation.
In both limits the underlying elastic behaviour is represented by an effective linear plate energy of Kirchhoff-Love type, enriched by an additional nonlinear active foundation term which we explicitly characterise.
The latter contribution, which is a consequence of the nemato-elastic coupling in the active layer, emerges in the limiting structure thanks to kinematic compatibility at the interface.

The first regime (that of large bodies) entails the relaxation of elastic stresses by formation of optimal optic microstructure, with full or partial coupling between in-plane and out-of-plane deformations depending on the thickness scaling. This process ultimately characterises the mechanical behaviour of large thin elastic sheets.
This setting is explored in the first part of this paper, inspired by observations of pattern formation and opto-elastic relaxation in bilayer systems of nematic elastomers, see \cite{greco2013reversible}. There, a complex material-structure interaction is observed in thin membranes of NLCE in contact with an overlying isotropic film, resulting in formation of micro-wrinkles competing with visible shear-band microstructures of the optical axis.
The former manifestation is a typical structural instability, also observed in thin stretchable membranes~\cite{vandeparre2010hierarchical,bella2014wrinkles}, whereas the latter is a material instability observed in various classes of media, encompassing, e.g. solid elastic crystals(\cite{ball1989fine}) besides liquid crystal elastomers.
This paper complements and completes the analysis developed in~\cite{cesana2018variational} on planar-constrained nematic bilayers by exploring the full three-dimensional elastic model allowing for out-of plane elastic deformations.
In our main relaxation results,~{Theorems~\ref{thm:relaxcoupled} and~\ref{thm:relaxweaklycoupled}, } we compute effective energies of thin (fully coupled) and thick (weakly coupled) nematic foundations. 
In the  former  regime, we show that out-of-plane deformations can be activated by in-plane traction boundary data
as a by-product of material relaxation and nematic coupling, 
as well as that antiplanar optical states can be tuned through planar boundary conditions and viceversa.
This is a nontrivial coupling mechanism with potential technological applications in opto-mechanical sensing devices.

The latter regime (small particles), corresponds to the physical setting of small multilayer domains which can be actuated into nontrivial out-of-plane deformation modes by uniform fields, as we show in the second part of the paper.
Building upon 
the
relaxation result,
we describe this asymptotic behaviour, where the optic director is free to rotate and realign under the influence of applied external fields, albeit homogeneously.
These small domains may be regarded as elementary ``building blocks'' for more complex morphing shapes that can be assembled via actuation of the frozen director, in presence of suitable boundary conditions.
In the second part of this article we turn our attention on actuation, i.e. the capability of controlling the shape of a membrane thanks to the activation of liquid crystal molecules by external fields. Our investigation is inspired by a number of recent experimental realisations. In \cite{plucinsky2016programming}, \cite{plucinsky2018patterning} and~\cite{Kvari20}, design of complex shapes is performed via thermal actuation of heterogeneously patterned nematic elastomer films.
Using thin motion-controlled strips~\cite{DSGN15} conceptualises soft nematic elastomer robots;
in \cite{bai2020photomechanical} and \cite{korner2020a-nonlinear} opto-elasticity in nematic elastomers is investigated to show that soft NLCE robots can execute work cycles thanks to the cooperative interaction between light absorption and mechanical deformations.
We refer to \cite{white2015programmable} for a review focussed on liquid crystalline materials from the view point of thermal-photo-elastic coupling. 

From the mathematical perspective, we perform the exact computation of the $\Gamma$-limit of a family of energy functionals parametrised by the two scale parameters.
For the actuation problem, because of the presence of an energetic contribution due to external fields possibly unbounded below, we face the issue of non-convexity and non-coercivity of the total energy functional which thus lacks sequential lower semicontinuity.
The asymptotic analysis is nonetheless successful in 
showing that limit regime enjoys a saddle structure   by computing the exact expression of asymptotic Lagrangians.
In our main contribution (Theorems~\ref{thm:actuationcoupled} and~\ref{thm:actuationthick}) we demonstrate that the limit system can indeed transform and convert work produced by electrostatic forces into shape deformation, which is, to date, a challenge in soft robotics.
To illustrate our purpose, we numerically solve a simple actuation problem for membrane bending and show, as a mathematically relevant example, the  equilibrium configuration of a nematic bilayer {induced by an imprinted LC arrangement}.

The outline of the paper is as follows. After  presenting   the functional setting in the Introduction,  we discuss the kinematics and the mechanics of the problem (Section 2). 
Section 3 is devoted to the analysis of relaxation results for thin and large NLCE bilayers. 
{
Because these build substantially upon material produced in \cite{cesana2018variational}, we 
limit to the body of the article only essential proofs,
postponing mathematical details in Appendix~\ref{sec:appendix}.}
In Section 4 we analyse limit functionals for thin and small NLCE bi-layers. 
After characterising  the asymptotic behaviour of  saddle points of the energy functionals, as an example of our analytical work, we describe numerical calculations showing shape-actuation.

\subsection{Notation} \label{par:notation}

Throughout the paper, Greek indices run from $1$ to $2$ whereas Latin indices run from $1$ to $3$. The summation convention on repeated indices is assumed, unless explicitly stated. To highlight the dependency with respect to in-plane vs. out-of-plane coordinates, a prime sign indicates planar components of a vector, of a second order tensor, and of differential operators, 
as in
$v',B',$ and $\nabla'(\cdot)=\partial_\alpha (\cdot)$ respectively.
We use $\iota_\alpha$ to indicate unit vectors in the $x_1-x_2$ plane.
In order to distinguish homologue quantities defined in the two layers, we superpose a hat to those which refer to the nematic layer, as in $k, \hat k$ to indicate limit rescaled strains in the film and nematic layer, respectively.
The inner product is denoted by a dot.
In general (but with some exceptions, like $\nu$), material parameters or effective coefficients are indicated by sans serif letters, cf. Table~\ref{2008061744} for a collection of relevant parameters and physical constants.
With $u \otimes_s v$ we signify the symmetrised outer product $\frac{1}{2}(u\otimes v + u\otimes v) $ between vectors $u, v$ and by  $I$ the identity matrix in $\R^{n\times n}$.
Throughout the paper, $C$ stands for a generic constant which may change from line to line. Thickness averages are indicated by an overbar, as in $\bar v(x'):= 1/H \int v(x', x_3)dx_3$ where $H$ denotes the size of the (transverse) integration domain. We adopt standard notation for functional spaces, such as $L^2(\O, \R^n)$,
$L^2(\O, \R^{n\times n})$, and $H^1(\O, \R^n)$, $H^1(\O, \R^{n\times n})$, for the Lebesgue spaces of square integrable maps from $\O$ onto $\R^n$ and $\R^{n\times n}$, and the Sobolev space of square integrable maps with square integrable weak derivatives on $\O$. Concisely, we write $L^2(\O)$ and $H^1(\O)$ whenever $n=1$.
All $\e$-dependent quantities refer to the physical three-dimensional system, a thin bilayer structure whose thickness depends on $\e$. After introducing appropriate scalings for all material quantities and rescaling the physical domain we drop the $\e$-dependence.

\newcommand{\mmu}{}
\newcommand{\lamm}{{\frac{\nu}{1-2\nu}}}
\newcommand{\coefopt}{{\frac{\nu }{1-\nu}}}
\newcommand{\coeftransvopt}{{\frac{-\nu}{1-\nu}}}
\newcommand{\avguet}{\bar u^\e_3}
\newcommand{\avguea}{\bar u^\e_\alpha}

\newcommand{\Jepfunc}{\mathcal{J}^p_\e}
\newcommand{\Jz}{\mathcal{J}^0}
\newcommand{\Jm}{\mathcal{J}^-}

\newcommand{\Jepen}{J^p_\e}

\newcommand{\Jez}{\mathcal J^0_\e}
\newcommand{\Jep}{\mathcal J^p_\e}
\newcommand{\Jepj}{\mathcal J^p_{\e_j}}
\newcommand{\Jp}{\mathcal J^p}

\newcommand{\Fp}{{\mathcal F}^p}
\newcommand{\Fpe}{\mathcal{F}^p_\e}
\newcommand{\Fpej}{\mathcal{F}^p_{\e_j}}
\newcommand{\Fz}{\mathcal{ F}^0}
\newcommand{\Fm}{\mathcal{ F}^-}
\newcommand{\Fze}{\mathcal{ F}^0_\e}

\newcommand{\Jg}{J^0}
\newcommand{\Jmg}{J^-}

\newcommand{\ukl}{{u}}

\newcommand{\uklsad}{{\ukl^*}}
\newcommand{\Qsad}{{\overline{Q}^*}}
\newcommand{\phisad}{{\overline{\phi}_{\Qsad}}}
 \newcommand{\termA}{{\textcircled{\small a}}}
 \newcommand{\termB}{{\textcircled{\small b}}}
 \newcommand{\termC}{{\textcircled{\small c}}}
 \newcommand{\termD}{{\textcircled{\small d}}}
 \newcommand{\termE}{{\textcircled{\small e}}}
 \newcommand{\termEi}{{\textcircled{\small f}}}
 \newcommand{\termEii}{{\textcircled{\small g}}}
 \newcommand{\termEiii}{{\textcircled{\small h}}}

\section{Setting of the problem}

\paragraph{Domain.}
Let $\O^\e$ be a sufficiently smooth three-dimensional domain, constituted by the union of two thin layers: a linearly elastic film occupying $\Oef=\o\times(0, \e L)$ and a soft nematic elastomer occupying $\Oeb=\o\times (-\e^{p+1}L, 0]$  where $\e\ll 1$ is a small parameter.
The two layers are attached to a rigid substrate which imposes a hard condition of place, see Figure~\ref{2008062138}.
The basis of the cylindrical three-dimensional domain is $\o\subseteq \R^2$ with characteristic size $L>0$.
We focus on \emph{thin} limit systems as $\e\to 0$, by requiring that $p+1>0$.
\begin{figure}[tb]
	\centering  
	\includegraphics[height=4cm]{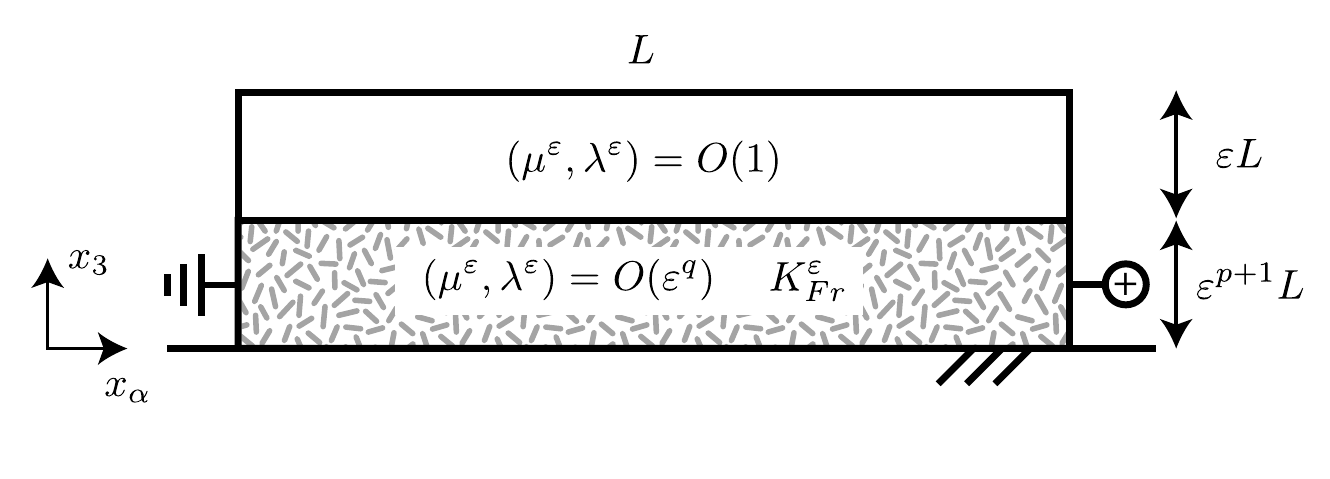}
\caption{Physical three-dimensional domain of a thin bilayer system consisting in one nematic elastomer layer supporting a stiff thin film. The system undergoes in-plane (membrane) and out-of-plane (bending) displacements, subject to mechanical volume and surface loads as well as electrical work inducing nematic reorientation. We distinguish two physically relevant regimes, depending on the scaling laws of physical parameters: the relaxation regime (with formation of microstructure) and the actuation regime (with frozen optic tensor).}
	\label{2008062138}
\end{figure}	
The elastic film can deform both in-plane through membrane deformations and out-of-plane, by bending.

\paragraph{Order tensors.}

{
According to   classical theories of liquid crystals, the description of optical axes and of order states for a cluster of nematic molecules is encoded in the eigenvalues and eigenvectors of a tensor field $Q$.
}
We define the set of biaxial (De Gennes) tensors \cite{de-gennes1993the-physics} as
\begin{eqnarray}\label{defiQQ}
\mathcal{Q}_{B}:=\Bigl\{ Q\in\R^{3\times 3}, \tr Q=0,Q=Q^T:\,\,
-\frac{1}{3}\leq\lambda_{\text{min}}(Q)\leq \lambda_{\text{max}}(Q)\leq
\frac{2}{3}
\Bigr\},
\end{eqnarray}
where $\lambda_{\text{min}}(Q)$ and $\lambda_{\text{max}}(Q)$ denote the
smallest and largest eigenvalue of the matrix $Q$. We remind that
$\mathcal{Q}_{B}$ is convex, closed and bounded. 
We introduce $\mathcal{Q}_{U}$, that is, the subset of $\mathcal{Q}_{B}$ populated by all uniaxial tensors \cite{ericksen1991liquid},
\begin{eqnarray}\label{2012131903}
\mathcal{Q}_{U}:= \Bigl\{Q\in\mathcal{Q}_{B}:\,\,
|Q|^6=54(\det Q)^2 \Bigr\}.
\end{eqnarray}

Also, we
introduce the set of (uniaxial) Frank tensors \cite{frank1958on-the-theory} which uses only the eigenframe of $Q$ as the nematic state variable, 
constrained to have eigenvalues $2/3,-1/3,-1/3$. Uniaxial tensors range
in the set
\begin{eqnarray}\label{phd012}
\mathcal{Q}_{Fr}:= \Bigl\{Q\in\mathcal{Q}_{U}:
\lambda_{\text{max}}(Q)=\frac{2}{3},\,\,
\lambda_{\text{min}}(Q)=-\frac{1}{3} \Bigr\}.
\end{eqnarray}
{We remark that \eqref{2012131903} and \eqref{phd012} are   pointwise closed and   closed in all strong topologies.}
Observe that any tensor in \eqref{phd012} can be represented in the following manner:
\begin{eqnarray}\label{2012131925}
Q= n\otimes n-\frac{1}{3} I
\end{eqnarray}
for some $|n|=1$.
It is important to remark that, whenever a liquid crystal system is described by a tensor in the form \eqref{2012131925}, then $n$
represents the common direction of the perfectly aligned nematic molecules. Instead, $\mathcal{Q}_U$ and $\mathcal{Q}_B$ describe disordered states, that is, configurations where the liquid crystal fails to be perfectly aligned. Instead, the description of such systems should be performed in probabilistic terms, and $\mathcal{Q}_U$ and $\mathcal{Q}_B$ model   probabilistic information derived from the theories of Ericksen~\cite{ericksen1991liquid} and de Gennes~\cite{de-gennes1993the-physics}, respectively.
Notice that, since $\tr Q=0$, this suffices to describe the
spectrum of $Q$. It follows by the definition that
$\mathcal{Q}_{Fr}$ is a closed and non-convex set
and the inclusion $\mathcal{Q}_{Fr}\subset\mathcal{Q}_{U}\subset\mathcal{Q}_{B}$ holds.
Importantly, $\mathcal{Q} _B$ coincides with the convex envelope of
$\mathcal{Q}_{Fr}$ and of $\mathcal{Q}_{U}$.

\paragraph{Mechanical model.}
The total energy of the system is modelled on the classical theory of linearised elasticity. Thus, we may assume  physical forces are additive and their effects are algebraically superposed.
The total energy combines a film contribution (measured on $\O_f^{\e}$) to the contribution of the nematic bonding layer (defined on $\O_b^{\e}$).
The latter, in turn, is the sum of three terms: a bulk energy density which measures the strain-order interaction of nematic elastomers according to the well-known model defined in \cite{cesana2009strain-order} and analysed in \cite{cesana2010relaxation, cesana2011nematic, cesana2011quasiconvex}; a curvature term (or Frank energy) proportional to the square of the gradient of the $Q$-tensor which, heuristically, induces molecules {to be parallel to each other;} and finally, a loading term representing the external work, the only possibly non-positive contribution to the energy.

Considering here only electrostatic work and summing all contributions, the total energy reads
\begin{multline}\label{2007291439}
	E_\e(v, Q):=
	\frac{1}{2}\int_\Oef \frac{\mathsf E^\e}{1+\nu}\left( \mmu|e(v)|^2 
	+ \lamm \tr^2	e(v) \right)dy \\
	+ \frac{1}{2}\int_\Oeb \frac{\mathsf E^\e}{1+\nu} \left(  \mmu|
	e(v)-Q|^2 + \lamm \tr^2 e(v) \right)dy  \\
	+ \frac{1}{2}\int_\Oeb K^\e_{Fr} |\nabla_\e Q|^2 dy -\frac{1}{2}\int_{\Omega^{\e}_b}\nabla\tilde{\varphi}^T{\mathsf {\tilde D}}(Q)\nabla\tilde{\varphi} dy,
\end{multline}
where admissible spaces for displacements $v$, the optic tensor $Q$, and the electrostatic potential $\tilde \varphi$ read
$$
 v\in \mathcal V_\e :=\{H^1(\O^{\e},\R^3), v( x', -\e^{p+1})=0 \text{ a.e. } x'\in \o\}, \quad Q\in  H^1(\Oeb,\mathcal{Q}_{X}),\quad \tilde{\varphi}\in H^1(\Oeb).
$$
{
Here and in what follows we adopt the notation $\mathcal{Q}_X$ 
(where $X$ stands for either $Fr,U$ or $B$) to indicate {the three} available order tensor models.
Observe that the choice of the admissible order tensor set is indeed a modelling assumption {in that, e.g.,} by constraining $Q$ to be of Frank type, we rule out biaxial order states and optical isotropy as finite-energy minimisers.}

\paragraph{Material regime (assumptions on the scaling of material parameters).} \label{par:material_regime_}
We make explicit, for definiteness, the assumptions on material parameters by fixing a parametric scaling law defining the relative elastic and nematic stiffness. 
Considering that the nematic bonding layer is much softer than the overlying film, we assume the following 
\begin{equation}
		(\mathsf E^\e, \nu^\e)(x)=\begin{cases}
	(\mathsf E, \nu), &x\text{ in }\Of\\
	(\e^q\mathsf E, \nu), &x\text{ in }\Ob 
\end{cases},\quad \text{with } q>0, 
\qquad
K^\e_{Fr} = \frac{\e^q\mathsf E}{1+\nu}\tilde\delta_{\e}^2.
\label{eqn:scalingmaterial}
\end{equation}
Here, $\mathsf E$ is the Young modulus of the elastic film and $-1<\nu<1/2$ its Poisson ratio. 
From now on, to simplify the notation without any loss of generality we assume {$\mathsf E/(1+\nu) = 1$}, leaving explicit reference to the only meaningful elastic nondimensional parameter, the Poisson ratio $\nu$. Note that this is always licit and amounts to a rescaling of displacements.
In the expression above, $\tilde\delta_{\e}$ represents the characteristic length scale which emerges from the competition between the shear modulus of nematic rubber vs. the Frank constant of the liquid crystal. For the purpose of our analysis, $\tilde\delta_\e$ identifies a critical material parameter which, as $\e$ goes to zero, may vanish or blow up, leading to the two separate regimes  of relaxation or of director actuation, respectively.
In order to bootstrap the asymptotic procedure focussing on the interplay between membrane and bending modes, we further scale dependent and independent variables as follows 
\begin{equation}\label{eqn:scalings}
v(y', y_3)=\left\{\begin{aligned}
	&L (\e \ua(L x', L\e x_3), \ut(Lx', L\e x_3))  & &\text{in } \Of\\
	&L(\e  \ua(L x', L\e^{p+1} x_3), \e^r \ut(L x', L\e^{p+1} x_3))  & &\text{in } \Ob,
\end{aligned}\right.
\end{equation}
where $r$ is the magnitude of vertical displacements, a parameter that ultimately depends on the loads.
The scaling above has a twofold goal, that of mapping the physical, $\e$-dependent domain onto a fixed, unit, domain, and that of exposing the interplay between in-plane vs. out-of-plane displacements which, in turn, depends on the type and intensity of the loads.

Similarly, we introduce the nondimensional (rescaled) electrostatic potential $\varphi$
\begin{equation}\label{2009150122}
\tilde{\varphi}(y',y_3)=\varphi_0^\e  \varphi(Lx',L\e^{p+1}x_3)\quad \textrm{in } \Ob,
\end{equation}
where $\varphi_0^\e$ is the electrostatic scale gauge.
Note that, because the electric field is solved independently from the opto-elastic problem, its scale is imparted by its boundary conditions which, in turn, can be freely chosen in such a way that the electric energy is of the same order of magnitude as the elastic terms.

\begin{figure}[t!]
	\centering
	\includegraphics[width=\textwidth]{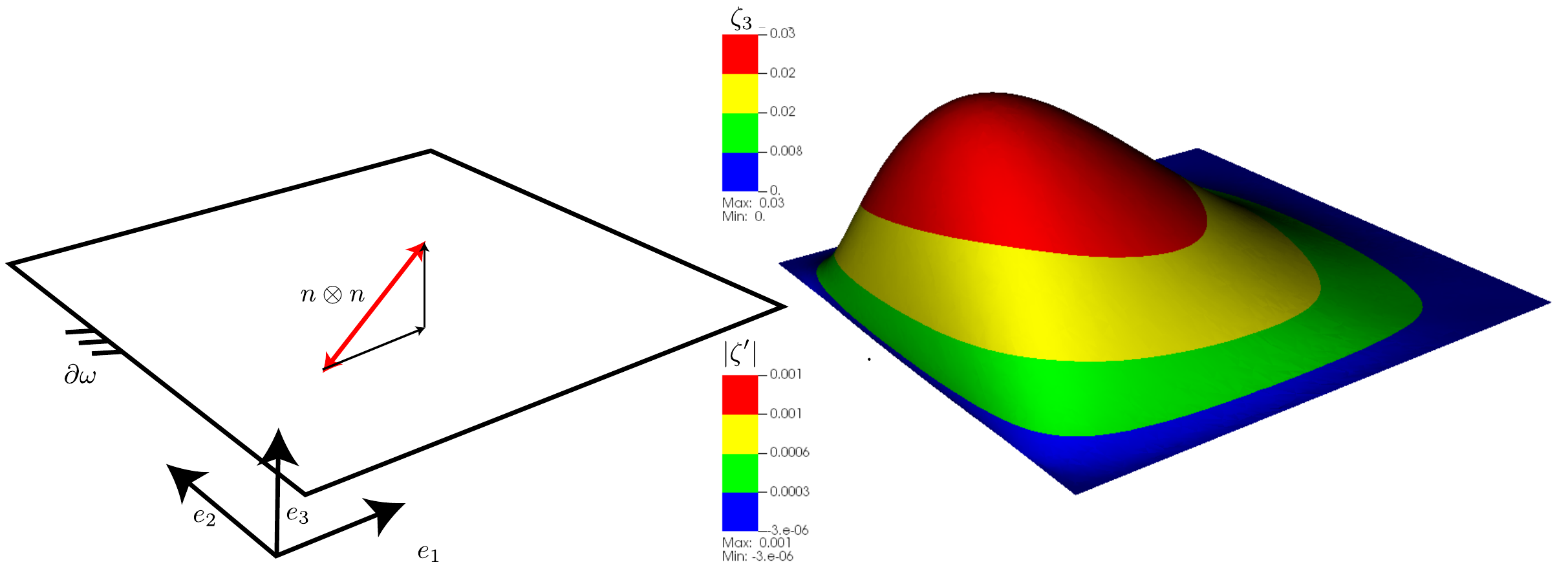}
	\caption{
	Illustrative numerical calculation of a thin nematic bilayer plate in the actuation regime, cf. Section~\ref{sec:actuation}, Theorem~\ref{lem:existenceactuation}.
The effective model given by the asymptotic theory is a fully coupled macroscopic opto-elastic plate.
	By exploiting strain-order coupling, spontaneous deformations of a multilayer composite achieve out-of-plane bending deformations under external electric stimulation.
Here, an initially flat thin active bilayer, clamped at the boundaries, is actuated by a uniform optic tensor $Q=n_0\otimes n_0-\frac{1}{3}I$ 
described by the homogeneous director
$n_0 = (\iota_1 + \iota_3)/\sqrt{2}$ (cf. image left). Colour coding in figure refers to the Euclidean norm of in-plane deformations $|\zeta'|$, the legend for $\zeta_3$ is displayed to indicate the relative scaling of transverse displacements.
	}
	\label{fig:actuation}
\end{figure}

\begin{table}[]
\centering
\begin{tabular}{@{}ll@{}}
\toprule
Symbol & Quantity \\ \midrule
$\O^{\e},\O$ & Bilayer (union of film and nematic layer)\\ 
$\O^{\e}_f, \Oeb; \Of, \Ob$  &Film and nematic layers (3D; rescaled)\\ 
$\o, L$ & Membrane planar section, diameter \\ 
$\nu, (\lambda, \mu)$ &Elastic constants: Poisson ratio, Lamé coefficients\\
$K_{Fr}^\e$  &Frank constant\\
$\epsilon_o; \epsilon_{\perp}, \epsilon_{||} $  & Dielectric constant (in vacuum);  relative constants (perpendicular and parallel)  \\ 
$ \mathsf K(\zeta',\zeta_3) $  & Effective stiffness of nematic foundation (relaxation regime)\\
$\mathsf{\tilde D}(Q), \mathsf D^{}(Q),  \mathsf {\overline D}(\overline Q)$  &Dimensional, nondimensional, and averaged matrix of dielectric coefficients \\ 
${\mathsf{ \overline B}}, \mathsf{ \overline B}(\overline Q)$  &Relaxed matrices of dielectric coefficients\\ 
${J^\e_f, J^\e_b,J^\e_{ele}}$ & Rescaled energies of film and bonding layers; electrostatic work  \\
$\mathcal J^p_\e,  J^p_\e, $ & Energy functional, mechanical model (relaxation regime)\\ 
$I_{\e}, I_0$ & Electrostatic work, asymptotic limit \\
{$\Fpe; \Fz,\Fm$} & {Energy functionals under Gauss Law, 3D and 2D limits} \\
{$\Jg, \Jmg$} & {Integral energies   under Gauss Law, 2D limits} \\

\bottomrule
\end{tabular}
\caption{Material and geometric parameters.} \label{2008061744}

\end{table}

\paragraph{Film energy.} Writing the energy~\eqref{2007291439} in terms of the scaled quantities identified in~\eqref{eqn:scalings}, the film contribution reads
\begin{multline}
\e^3L^3 J_f^\e(u)
	=\e^3L^3
	\frac{1}{2}\int_\Of \left( \mmu  |\eab(u)|^2+({\e^{-2}}\ett(u))^2 + 2|\e^{-1}\eat(u)|^2
	 +  \lamm \left(\eaa(u)+ \e^{-2}\ett(u) \right)^2 
 \right)dx. 	\label{2008072345}
\end{multline}

\paragraph{Nematic energy.} 
On the other hand, using~\eqref{eqn:scalings}, the nematic contribution to the total energy~\eqref{2007291439} reads
\begin{multline}\label{eqn:nemenergy}
\e^3 L^3\e^{q+p-2}J^\e_b(u,Q):= \\ 
L^3\frac{1}{2}\e^q\e^{p+1}\int_\Ob \mmu \left( |\e e'(u)-Q'|^2+ \left( \frac{\e^{r}}{\e^{p+1}}\ett(u)-\Qtt \right) ^2 + 2\left| \frac{1}{2} \left( \e^r \nabla' \ut + \frac{\e}{\e^{p+1}}\pt u' \right) -(Q \iota_3)' \right|^2\right) dx\\ 
+  L^3\frac{1}{2}\e^q\e^{p+1}\int_\Ob\lamm \left( \e\tr(e'(u))+ \frac{\e^r}{\e^{p+1}}\ett(u) \right)^2dx+
\frac{1}{2}L\int_\Ob K^\e_{Fr} \left( |\nabla' Q|^2+\left| \frac{\pt Q}{\e^{p+1}} \right|^2  \right)\e^{p+1} dx,
\end{multline}
where the last term, the curvature energy, rewrites
\begin{multline}
\frac{1}{2}L\int_\Ob K^\e_{Fr} \left( |\nabla' Q|^2+\left| \frac{\pt Q}{\e^{p+1}} \right|^2  \right)\e^{p+1} dx=
\frac{1}{2}L\e^{p+1+q}\tilde\delta_\e^2\int_\Ob  \left( |\nabla' Q|^2+\left| \frac{\pt Q}{\e^{p+1}} \right|^2  \right) dx=\\
\frac{1}{2}L\frac{\e^{p+1+q}\tilde \delta_{\e}^2}{\e^{2p+2}}\int_\Ob  \left(\e^{2p+2}|\nabla' Q|^2+\left| \pt Q \right|^2  \right) dx.
\end{multline}{In the expression above and throughout the paper, $(Q \iota_3)'$  is a planar vector whose components are $\Qat$.}
Here, with no loss of generality, we have adopted the so-called single-constant approximation of the full nematic curvature energy, collapsing to one single deformation term the splay, bend, and twist modes (see \cite{virga1995variational}, \cite{de-gennes1993the-physics}).

We choose to keep an explicit dependence upon $p$ because, depending on its value (the relative thickness of the film layers), we identify phenomenologically different limit regimes.
{Note that the curvature energy penalises spatial reorientation of the order tensor and in turn, pattern formation. Owing to linearity this contribution is integrated over the reference domain $\Ob$. 
We refer to~\cite{BDS15} for an approach based on non-linear Frank model whereby the curvature penalisation is measured on the deformed configuration.}

\paragraph{Electrostatic work.}
The electrostatic work density is the (scalar) product of the electrostatic vector $\mathbb{E}$ by the dielectric displacement vector $\mathbb{D}$.
Based on the linear model of nematic liquid crystals, the relation between $\mathbb{D}$ and $\mathbb{E}$ is obtained introducing the tensor of dielectric coefficients $\mathsf{\tilde D}(Q)$ so that $\mathbb{D}:=\mathsf{\tilde D}(Q)\mathbb{E}$, see~\cite{de-gennes1993the-physics}, \cite{longa1987an-extension}.

Upon introduction of the  
electrostatic potential, related to the 
electric field by
$\mathbb{E}=-\nabla \tilde\phi$, the electrostatic work density is given by ${\frac{1}{2}}\mathbb{D}\cdot \mathbb{E}={\frac{1}{2}} \mathbb{E}^T\mathsf{\tilde D}(Q)\cdot\mathbb{E}$ where $ \tilde{\mathsf{{D}}}(Q)=\epsilon_0\mathsf D(Q)$.
The tensor of dielectric coefficients depends linearly on the order tensor $Q$. In turn, the electric field is obtained by optimisation and depends, in an intricate way, upon $Q$. We shall elaborate on their connection in the Actuation, Section~\ref{sec:actuation}.
The scaled electrostatic work reads

\begin{equation}\label{2008072347}
{\frac{1}{2}}\int_{\Omega^{\e}_b}\nabla\tilde{\varphi}^T\mathsf {\tilde D}(Q)\nabla\tilde{\varphi}\,dx=
\e^3	(\phi_0^\e)^2\e^{p-2}\epsilon_0 L{\frac{1}{2}}\int_{\Ob} \nabla_\e^T  \varphi  \mathsf D(Q)\nabla_\e  \varphi
\, dy=:
\e^3 (\phi_0^\e)^2\e^{p-2}\epsilon_0 L J^\e_{ele}(Q, \phi).
\end{equation}
where we have concisely denoted  by $\nabla_\e \varphi$ the scaled gradient of a scalar function, namely $\nabla_\e \varphi:=\left(\nabla'\varphi,\frac{1}{\e^{p+1}}\partial_3\varphi\right)$.
Note that the work integral above, for a fixed $Q\in \mathcal{Q}_X$ ($X$ being a place-holder for $Fr, U$ or $B$) and for a fixed $\e>0$, is a standard elliptic functional modelled upon the symmetric positive definite matrix of nondimensional dielectric coefficients $\mathsf D(Q)$.

\paragraph{Additional mechanical loads.}
Finally, we consider applied body and surface loads by prescribing two force densities, $f^\e$ in the interior and $g^\e$ on the upper face of the film domain.
Without loss of generality, we scale imposed loads in such a way that the corresponding work is of the order of magnitude of the elastic film energy.
Accordingly, we set $f^\e := \e^2 f, g^\e := \e^3 g $ with $f:L^2(\Of, \R^3), g:L^2(\o\times \{1\}, \R^3)$ so that the scaled linear form corresponding to mechanical loads (cf., e.g., s~\cite[vol.~2]{ciarlet1988three-dimensional}) reads
\begin{equation}
\int_{\Oef} f^\e v dy+\int_{ \o\times \{\e\}} g^\e v dy =\e^3 L^3 \int_{\Of} f u dx+ \e^3L^2 \int_{\o\times \{1\}} gu dx'.
\end{equation} 
{Here we have used~\eqref{eqn:scalings}.}
Note that such assumption on the scaling of loads is not restrictive owing to the fact that the mechanical work is a continuous perturbation to the total energy.

\subsection{Scaling regimes} \label{par:scaling}
We specialise the scaling laws introduced in~\eqref{eqn:scalingmaterial},~\eqref{eqn:scalings} in order to focus on the material regime in which there is possible coupling between membrane and bending deformation modes, as well as with the optoelastic behaviour of the nematic layer.
Heuristically, the bending energy of the film scales like $\e^3$, thus we fix the scaling parameters of the system in such a way that both i) the energy of the nematic bonding layer is of the same order of magnitude of the bending energy of the film, ii) we focus on vertical displacements which are of the same order of the thickness of the overlying film, and iii) the electrostatic work is of the same order of magnitude of the membrane energy of the film. Respectively, we set
\begin{equation}\label{2009011817}
\text{i) }	q+p-2=0, \qquad\text{ii) } r=p+1,\qquad \text{iii) }\phi_0^\e=L\e^{-1-p/2} {\epsilon_0}^{-1/2}.
\end{equation}
Under these assumptions, the total energy, i.e., the sum of film and nematic layer energies minus the external work, as defined in 
\eqref{2008072345},
\eqref{eqn:nemenergy}
and 
\eqref{2008072347},
reads
\begin{multline}
\label{2009051223}
\Jepen(v,Q,\varphi) : = 
J_f^\e(v)+J^\e_b(v,Q)-J^\e_{ele}(Q,\varphi)
\\
	=\frac{1}{2}\int_\Of \left( |e'(v)|^2+({\e^{-2}}\ett(v))^2 + 2|\e^{-1}(e(v)e_3)'|^2\right) dx + \lamm \left((\tr(e'(v)))+ \e^{-2}\ett(v) \right)^2dx\\
	+\frac{1}{2} \int_\Ob  \left( |\e e'(v)-Q'|^2+ 
	\left( \ett(v)-\Qtt \right) ^2 \right)dx \\
	+ \frac{1}{2}\int_\Ob 2\left| \frac{1}{2} \left( \e^{p+1} \nabla' \vt + \e^{-p}\pt v' \right) -(Q \iota_3)' \right|^2dx+
	\lamm \left( \e\tr(e'(v))+ \ett(v) \right)^2dx\\
	+ \frac{1}{2}\int_\Ob \frac{\tilde\delta_\e^2}{L^2\e^{2p+2}} \left(\e^{2p+2}|\nabla' Q|^2+\left| {\pt Q} \right|^2  \right)dx
	-
{\frac{1}{2}}\int_{\Ob}\left(\nabla'\varphi,\frac{1}{\e^{p+1}}\partial_3\varphi\right)^T\mathsf D(Q) \left(\nabla'\varphi,\frac{1}{\e^{p+1}}\partial_3\varphi\right) dx.
\end{multline} 
The quantity 
\begin{equation}
	\delta_\e^2 :=\frac{ \tilde\delta_\e^2}{L^2\e^{2p+2}},
\end{equation}
identifies a material length scale stemming from the ratio between Frank's curvature constant and the bonding layer's stiffness, relatively to the size of the domain $L$ and the thickness of the nematic layer.
Notice that this quantity is scaled with respect to the thickness, hence, depending on the material regime and geometric dimensions may either vanish or blow up, as $\e\to 0$.
These two scenarios indeed correspond to the  distinct material regimes of actuation (with fixed orientation of the director) and that of spontaneous relaxation (with emergence micro-textured patterns).

More precisely,
the relaxation scenario is dominated by the \emph{rescaled} length scale $\delta_\e$, in the regime  $\delta_\e\to 0$. In this setting, $\delta_\e$ is the smallest scale of the system well below the layers' thickness and allows for  transition layers of negligible energetic cost.
Contrarily, the actuation regime is characterised by the \emph{macroscopic} length scale $\tilde\delta_\e$, in the limit $\tilde\delta_\e\to \infty$. In this context the optic tensor is  rigid, its homogeneity is forced under the influence of applied external fields.

Because an electric field generated by an external device acts on the nematic elastomer by orienting the LC molecules and thus performing work, the sign of the functional is undefined. A careful analysis is required to study critical points of the total energy which are of saddle-type.
We devote Section~\ref{sec:actuation} to the analysis of the nematic elastic foundations and electric fields, 
whereas we focus our attention in the next section to the analysis of the regime of nematic relaxation where optoelastic patterns spontaneously emerge, without external stimuli, in such a way to relax mechanical stresses. 
Accordingly, we set $\varphi\equiv 0$ in~\eqref{2009051223}
and compute the asymptotics as $\e\to 0$ of the energy ${J_\e^p(v,Q,0)}$.

\section{Relaxation}\label{sec:relaxation}
The relaxation regime for nematic multilayers is characterised by the spontaneous emergence of textured microstructures and a strong two-way coupling between optic axis and elastic displacements.
This scenario, in turn, occurs as Frank's curvature energy is small and transitions between differently oriented microscale domains can be accommodated with little energetic cost. Indeed, in this case, Frank's stiffness provides the smallest length scale of the system.
In line with the modelling approach introduced for micromagnetics~\cite{de-simone1993energy}, 
relaxation occurs as $\delta_{\e}$ vanishes, corresponding to the regime of a large plates with a small bending constant.

The program is to explicitly compute the effective stress relaxation induced by a local accommodation of the optical texture under mechanical deformation, a mechanism which is responsible of the emergence of fine scale, possibly periodic, optical patterns {of martensitic type (see \cite{DeS99}, \cite{Bhatta03})}. 
In energetic terms, this amounts to first computing locally-optimal nematic textures at microscale and then performing the dimension reduction to derive the following macroscopic two-dimensional one-variable model  
\cite{cesana2018variational,cesana2015effective,cesana2010relaxation,desimone2002macroscopic}

\begin{equation}
	\mathcal{J}^p_\e(u):=
	\begin{cases}
	\displaystyle{\inf_{Q\in H^1(\O_b, \mathcal{Q}_{X})} {J}^p_\e(u, Q, 0)} &\text{if }u\in \mathcal V \\
		+\infty &\text{if } u\in L^2(\Omega,\R^3)\setminus \mathcal V,
	\end{cases}
\label{eqn:total3denergy}
\end{equation}
where 
\begin{equation}
\label{eqn:3dspace}
	\mathcal V := \{ H^1(\O,\R^3), u(x',-1)=0\}
\end{equation}
 is the set of  kinematically admissible three-dimensional displacements 
and 
$X$ stands for either $Fr,U$ or $B$, depending on the underlying  order tensor model.
By computing and matching a lower and an upper bound, 
we show that, the $\Gamma$-limit of \eqref{eqn:total3denergy} is the same to all {nematic order} models and corresponds to an effective biaxial order-tensor model.

{Our work builds upon  \cite{cesana2018variational} where the  strain-order coupling 
has been fully explored and clarified for effective models of NLCE  bilayers in planar confinement.}
{
Here, we focus on the specific aspects related to the coupling between in-plane and out-of-plane displacements,
pertaining to the compactness and the characterisation of the limit space for energy minimising displacements, as well as its mechanical role.
We elaborate and we give full account of this in our proof of the Gamma-liminf inequality.
The self-contained proof of the Gamma-limsup inequality is postponed to the Appendix, adapting the result in~\cite{cesana2018variational} to the present situation.
}

\begin{Remark}
{We may rewrite~\eqref{eqn:total3denergy} in compact notation} by introducing scaled strain tensors in the film $\kappa_\e$ and in the nematic layer $\hat \kappa_\e$, reading respectively
\begin{equation}\label{2007092341}
\kappa_\e(u)=
\left( 	\begin{matrix}
		e'(u) & {\frac{1}{2\e}}\left( \nabla' \ut + \pt u' \right)\\
		\text{sym} & \e^{-2} \ett(u) 
	\end{matrix} \right) \quad \text{and} \quad
\hat\kappa_\e(u)=
\left( 	\begin{matrix}
		e'(u) & {\frac{1}{2\e}}\left( \e^{p+1}\nabla' \ut + \e^{-p}\pt u' \right)\\
		\text{sym} & \ett(u) 
	\end{matrix} \right),
\end{equation}
 For $u\in \mathcal V$
\begin{multline}
\label{2007291555}
	J^p_\e(u)=
		\frac{1}{2} 
		\int_\Of \left( |\kappa_\e|^2 + \lamm \operatorname{tr}^2 \kappa_\e \right)dx  \\
		+ \frac{1}{2}\inf_{Q\in H^1(\O_b, \mathcal{Q}_{X})}
		\int_\Ob \left( |\hat \kappa_\e - Q|^2 +\lamm \operatorname{tr}^2 \hat \kappa_\e+ \frac{1}{2}  \delta_\e^2 \left( \e^{p+2}|\nabla' Q|^2+\left| {\pt Q} \right|^2  \right) \right)dx .
\end{multline}
\end{Remark}
The convergence properties of minimising sequences of displacements associated to the functional above characterise the limit space of displacements, independent of the thickness ratio $p$.
However, it is \emph{the rate} of convergence of minimising sequences (depending on values of $p$)  that identifies the contributions entering into the limit asymptotic models. For this reason, we  carry explicit dependence on $p$ in the total energy functional.
Also note that, because it is the boundedness of scaled terms that implies sharp convergence properties of displacements, the formulation via rescaled strains~\eqref{2007291555} proves to be effective in clarifying and rendering explicit the compactness of minimising sequences in \eqref{eqn:total3denergy}.

\subsection{Estimates and compactness}\label{par:estimates}
We start with two preliminary results frequently invoked in the reminder of the article.

\begin{lemma} [Poincaré-type inequality]
\label{lem:poincare}
Let $u\in L^2(\O,\R^3)$, with $\O=\o\times (-1,1)$, 
{$\partial_3 u\in L^2(\O)$ with $u(x', -1)=0$} a.e. $x'\in \o$. Then there exists a constant $C>0$ depending only on $\o$, such that
$$
\nltwo[u]{L^2(\O)}^2\leq C \nltwo[\pt u]{L^2(\O)}^2
$$
\end{lemma}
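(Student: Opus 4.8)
The plan is to establish the Poincaré-type inequality by integrating along vertical segments and exploiting the homogeneous boundary condition on the bottom face $x_3=-1$. Since $u(x',-1)=0$ for a.e.\ $x'\in\o$ and $\partial_3 u\in L^2(\O)$, for a.e.\ fixed $x'\in\o$ the function $x_3\mapsto u(x',x_3)$ is absolutely continuous on $(-1,1)$ (as the integral of its $L^2$, hence $L^1$, derivative on a bounded interval), and we may write the fundamental theorem of calculus
\[
u(x',x_3)=\int_{-1}^{x_3}\partial_3 u(x',t)\,\mathrm{d}t
\qquad\text{for a.e.\ }x'\in\o,\ x_3\in(-1,1).
\]

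Next I would apply the Cauchy--Schwarz inequality in the $t$-variable: for each such $x',x_3$,
\[
|u(x',x_3)|^2=\left|\int_{-1}^{x_3}\partial_3 u(x',t)\,\mathrm{d}t\right|^2
\leq (x_3+1)\int_{-1}^{x_3}|\partial_3 u(x',t)|^2\,\mathrm{d}t
\leq 2\int_{-1}^{1}|\partial_3 u(x',t)|^2\,\mathrm{d}t,
\]
using $x_3+1\le 2$ and enlarging the domain of integration to all of $(-1,1)$. The right-hand side no longer depends on $x_3$.

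Then I would integrate this pointwise bound over $\O=\o\times(-1,1)$. Integrating in $x_3$ over $(-1,1)$ contributes a factor $2$, and integrating in $x'$ over $\o$ gives
\[
\|u\|_{L^2(\O)}^2=\int_\o\int_{-1}^1|u(x',x_3)|^2\,\mathrm{d}x_3\,\mathrm{d}x'
\leq \int_\o 2\cdot 2\int_{-1}^1|\partial_3 u(x',t)|^2\,\mathrm{d}t\,\mathrm{d}x'
=4\,\|\partial_3 u\|_{L^2(\O)}^2,
\]
so the claim holds with $C=4$ (any constant $\ge 4$ works; the sharp constant is not needed). The argument is componentwise in the $\R^3$-valued case, so the vector-valued statement follows by summing over components. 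There is no serious obstacle here; the only point requiring a word of care is the justification that a.e.\ vertical slice is absolutely continuous with the trace $0$ at $x_3=-1$ attained in the pointwise sense, which follows from a standard Fubini argument on the distributional identity $u(x',\cdot)=\int_{-1}^{\cdot}\partial_3 u(x',t)\,\mathrm{d}t$ together with the prescribed boundary datum, and from the fact that $\o$ is bounded so that the slice integrals are finite for a.e.\ $x'$.
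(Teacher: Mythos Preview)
Your proof is correct and is the standard argument for this type of one-dimensional Poincar\'e inequality: represent $u$ via the fundamental theorem of calculus along vertical fibres, apply Cauchy--Schwarz, and integrate. The paper itself does not supply a proof of this lemma (it is stated and then immediately used), so there is nothing substantive to compare; your argument is exactly what one would expect. One minor observation: your computation yields $C=4$, which in fact depends only on the height of the cylinder $(-1,1)$ and not on $\o$ at all, so the dependence on $\o$ in the statement is harmless but superfluous.
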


The next result, proved in~\cite[Section 4.1]{cesana2018variational}, allows to characterise the weak limit of the (suitably rescaled) gradient of a bounded displacement field within the nematic layer.

\begin{lemma}[Convergence of gradients]
Let $f_{\e}\in H^1(\Ob,\R^3)$ for every $\e$. Let $K>0$.
Suppose $f_{\e}\in L^2(\Ob,\R^3)$ uniformly bounded in $\e$ and
$
\e^K\|\nabla 'f_{\e}\|_{L^2(\Ob,\R^3)}	  \leq C,
$
with $C$ independent of $\e$. Then
$
\e^K \partial_i f_{\e} \wto 0
$
weakly in $L^2(\Ob,\R^3)$, for $i=1,2,3$.
\label{lemma:2008081840}
\end{lemma}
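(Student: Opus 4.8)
The plan is to use a duality (weak formulation) argument: to show $\e^K \partial_i f_\e \wto 0$ in $L^2(\Ob,\R^3)$ it suffices to test against smooth compactly supported functions and against a spanning set of $L^2$, then invoke the uniform bound $\| \e^K \partial_i f_\e\|_{L^2} \le C$ (available for $i=1,2$ by hypothesis, and which we must first establish for $i=3$) to pass from a dense subclass to all of $L^2(\Ob)$. First I would treat the tangential directions $i=1,2$. For any $\phi \in C_c^\infty(\Ob,\R^3)$, integration by parts gives $\int_\Ob \e^K \partial_i f_\e \cdot \phi \, dy = -\e^K \int_\Ob f_\e \cdot \partial_i \phi \, dy$, and since $\|f_\e\|_{L^2(\Ob)} \le C$ uniformly while $\e^K \to 0$, the right-hand side tends to $0$. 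Because $C_c^\infty$ is dense in $L^2$ and the sequence $\e^K \partial_i f_\e$ is uniformly bounded in $L^2$ (by the hypothesis $\e^K\|\nabla' f_\e\|_{L^2} \le C$), the standard $\tfrac{\e}{3}$-type argument upgrades this to weak convergence against every test function in $L^2(\Ob,\R^3)$, hence $\e^K \partial_\alpha f_\e \wto 0$ for $\alpha = 1,2$.

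The main obstacle is the transverse derivative $i=3$, because no bound on $\e^K\partial_3 f_\e$ is assumed — only on the in-plane gradient. The resolution is to observe that $f_\e \in L^2$ uniformly already forces $\e^K f_\e \to 0$ \emph{strongly} in $L^2(\Ob,\R^3)$ (as $\e^K \to 0$ and the $L^2$ norms are bounded). Therefore $\e^K f_\e \to 0$ in the sense of distributions, so its distributional derivative $\partial_3(\e^K f_\e) = \e^K \partial_3 f_\e$ also converges to $0$ in $\mathcal D'(\Ob)$. It then remains only to verify that $\e^K \partial_3 f_\e$ is bounded in $L^2(\Ob)$, so that a subsequence converges weakly in $L^2$ and the limit — which must coincide with the distributional limit $0$ — is identified; one concludes by the usual subsequence argument that the whole sequence converges weakly. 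The boundedness of $\e^K\partial_3 f_\e$ in $L^2$ can be obtained either as part of the standing hypotheses of the context in which the lemma is applied (the displacements have uniformly bounded rescaled strains, cf.~\eqref{2007092341}), or, if one wants the lemma to stand alone, by noting that the conclusion for $i=3$ can be phrased purely distributionally: $\e^K \partial_3 f_\e \to 0$ in $\mathcal D'(\Ob)$, which combined with any a priori $L^2$-bound available in the application yields the stated weak $L^2$ convergence.

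Putting the two parts together, for each $i \in \{1,2,3\}$ we have $\e^K \partial_i f_\e \wto 0$ weakly in $L^2(\Ob,\R^3)$, which is the claim. The only genuinely new ingredient beyond routine functional analysis is the remark that multiplying a uniformly $L^2$-bounded sequence by the vanishing scalar $\e^K$ produces \emph{strong} $L^2$ convergence to zero, which is what makes the transverse derivative — uncontrolled by the hypotheses — nonetheless pass to the limit after differentiation in the distributional sense; the weak $L^2$ bound then merely serves to locate the limit inside $L^2$ rather than in a larger distribution space.
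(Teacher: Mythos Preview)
The paper does not give a self-contained proof but simply defers to~\cite{cesana2018variational}, so a line-by-line comparison is not possible. Your argument for the in-plane directions $i=1,2$ is correct and is the standard one: integrate by parts against $\phi\in C_c^\infty(\Ob,\R^3)$, use the uniform $L^2$ bound on $f_\e$ together with $\e^K\to 0$ to kill the right-hand side, then invoke density of $C_c^\infty$ in $L^2$ and the uniform bound $\|\e^K\partial_\alpha f_\e\|_{L^2}\le C$ to upgrade to weak convergence against all of $L^2$.

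You have also correctly identified the genuine issue with $i=3$: no bound on $\e^K\partial_3 f_\e$ is assumed, so weak $L^2$ convergence cannot follow from the stated hypotheses alone --- only distributional convergence does. This is almost certainly an artefact of the lemma's statement: either the hypothesis should read $\e^K\|\nabla f_\e\|_{L^2}\le C$ (full gradient) rather than $\|\nabla' f_\e\|$, or the conclusion should be restricted to $i=1,2$. In the paper the lemma is in fact only invoked for in-plane derivatives (Proposition~\ref{2009141558}, items $f)$ and $g)$), so the $i=3$ case is never actually used. Your remark that in concrete applications the missing $L^2$ bound on the transverse derivative is supplied by the uniform boundedness of rescaled strains (cf.~\eqref{2007092341}) is exactly the right way to close the gap when it matters.
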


 \begin{proof}
 See Paragraph Compactness of Section 4.1 in \cite{cesana2018variational}.
 \end{proof}

Considering admissible minimising sequences $(u^\e) \subset L^2(\O,\R^3)$ that leave the energy uniformly bounded
implies,
thanks to  Lemma~\ref{lem:poincare}, 
 the uniform boundedness of three-dimensional displacements in $L^2(\O,\R^3)$.
Therefore, there exists a compact set of $L^2(\O,\R^3)$ such that minimising sequences are compact therein.
The two lemmas above allow to establish the following characterisation of limit strains.
{In what follows, we denote thickness averages by an overline (cf. notation in Section~\ref{par:notation}). Also, observe thanks to Jensen's inequality we have, for $f\in L^2(\Of)$, that
$
\|f\|_{L^2(\Of)}\ge \|\overline{f}\|_{L^2(\o)}.
$

\begin{proposition}[Characterisation of limit strains]\label{2009141558}
Consider a sequence $\ue\subset L^2(\Omega,\R^3)$ for every $\e$
such that $\ue(\cdot, -1)=0$
and $\ue\to u$ strongly in $L^2(\Of,\R^3)$ as $\e\to 0$ and plug $\ue$ into ${J_\e^p(\ue)}$. 
Uniform boundedness ${J_\e^p(\ue)\leq C}$ implies that

\begin{itemize}
 \item[a)] there exists a limit $\hat k\in L^2(\Ob, \R^{3\times 3})$ such that   $\hat \kappa_\e\wto \hat k$ in  $L^2(\Ob, \R^{3\times 3})$.
\item[b)]
 there exists $ k\in L^2(\Of, \R^{3\times 3})$ such that $\kappa_\e^{} \wto k$ in $L^2(\Of, \R^{3\times 3})$, and
$$
 k_{33} = -\frac{\nu}{1-\nu }e_{\alpha \alpha}(u) , \qquad
k_{\alpha \beta}=\eab(u),	
 $$
		\item[c)] $\eit(\ue)\to 0$ strongly in $L^2(\Of,\R^{3\times 3})$,
\item[d)] there exists $e\in L^2(\Of,\R^{3\times 3})$ such that
			$e'(\ue)\wto e'$ weakly in $L^2(\Of,\R^{3\times 3})$,
		\item[e)] $\nltwo[\ett(\ue)]{L^2(\Ob,\R^{3\times 3})}\leq C$,
		\item[f)] $e'(\ue)\wto 0$ weakly in $L^2(\Ob,\R^{2\times 2})$,
\item[g)] $\e^{p+1}\nabla ' \overline{u}_3^{\e}\rightharpoonup 0$, weakly in $L^2(\o)$.
	\end{itemize}
\end{proposition}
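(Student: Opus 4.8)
The plan is to extract all seven convergences from the single assumption $J^p_\e(\ue)\le C$, proceeding term by term through the energy functional \eqref{2007291555}. The organizing principle is: boundedness of the energy forces boundedness of each rescaled strain quantity (and of the infimizing $Q$), and then the specific structure of the rescalings (powers of $\e$) dictates which quantities survive in the limit and which are driven to zero.

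First I would extract boundedness. Since $J^p_\e(\ue)\le C$ and each bracketed term in \eqref{2007291555} is nonnegative (the film part is a sum of squares; the nematic part, after choosing $Q$ near-optimal, is likewise controlled), we get: $\|\kappa_\e(\ue)\|_{L^2(\Of)}\le C$, which immediately gives part (a) for the film in the form that $\kappa_\e(\ue)$ is weakly precompact in $L^2(\Of,\R^{3\times3})$; similarly $\|\hat\kappa_\e(\ue)\|_{L^2(\Ob)}\le C$ gives part (a) for the nematic layer, since $|\hat\kappa_\e - Q|^2$ bounded plus $Q\in\mathcal Q_X$ bounded (recall $\mathcal Q_B$ is bounded) gives $\|\hat\kappa_\e\|_{L^2(\Ob)}\le C$. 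Reading off the individual entries of $\kappa_\e$: $\|e'(\ue)\|_{L^2(\Of)}\le C$ gives part (d) up to subsequence; $\|\e^{-2}\ett(\ue)\|_{L^2(\Of)}\le C$ and $\|\e^{-1}(e(\ue)e_3)'\|_{L^2(\Of)}\le C$ give $\ett(\ue)=O(\e^2)\to 0$ and $\eat(\ue)=O(\e)\to 0$ strongly in $L^2(\Of)$, which is part (c). Reading off entries of $\hat\kappa_\e$ in the nematic layer: $\|\ett(\ue)\|_{L^2(\Ob)}\le C$ is part (e); and $\|e'(\ue)\|_{L^2(\Ob)}$ — wait, the $(1,2)$-block of $\hat\kappa_\e$ is just $e'(\ue)$, but the relevant bound in the nematic layer comes from $|\e e'(\ue)-Q'|^2\le C$, i.e. $\|\e e'(\ue)\|_{L^2(\Ob)}\le C$, so $\e e'(\ue)$ is bounded but $e'(\ue)$ itself need not be. Here is where Lemma~\ref{lemma:2008081840} enters: we do \emph{not} have a gradient bound strong enough in the nematic layer, but combining the displacement bound (from Lemma~\ref{lem:poincare}) with the fact that $\e e'(\ue)$ is bounded lets us conclude, via Lemma~\ref{lemma:2008081840} applied with the appropriate power $\e^K$, that $e'(\ue)\wto 0$ weakly in $L^2(\Ob,\R^{2\times2})$ — part (f). Similarly part (g), $\e^{p+1}\nabla'\overline{u_3^\e}\wto 0$, follows by applying Lemma~\ref{lemma:2008081840} to $f_\e = \overline{u_3^\e}$ (bounded in $L^2(\o)$ by Poincaré and Jensen) with $K=p+1$; here I should be careful that the scaled off-diagonal term $\frac1{2\e}(\e^{p+1}\nabla'\ue_3 + \e^{-p}\pt\ue')$ being bounded in $L^2(\Ob)$ is what feeds the hypothesis, possibly after averaging in $x_3$ and using that $\pt(\cdot)$ of an average can be controlled.

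The one genuinely non-mechanical step is part (b): identifying the transverse and shear components of the film limit strain $k$. The weak limit $k_{\alpha\beta}=e_{\alpha\beta}(u)$ is just weak-lower-semicontinuity compatibility of $e'(\ue)\wto e'(u)$ in $L^2(\Of)$ — using that $\ue\to u$ strongly in $L^2(\Of)$ so the weak limit of the (distributional) planar gradient is the planar gradient of $u$, together with $u\in\mathcal V\subset H^1$. The nontrivial identity is $k_{33}=-\tfrac{\nu}{1-\nu}e_{\alpha\alpha}(u)$. This is the classical ``plane-stress'' relation and I would derive it by the standard variational/algebraic argument: from boundedness, $\e^{-2}\ett(\ue)\wto k_{33}$ (some subsequence) while $e_{\alpha\alpha}(\ue)\wto e_{\alpha\alpha}(u)$; the relevant piece of the film energy is $(\e^{-2}\ett(\ue))^2 + \lamm(e_{\alpha\alpha}(\ue)+\e^{-2}\ett(\ue))^2$ — minimizing this quadratic form over the $\ett$-slot pointwise (i.e. the Euler–Lagrange / orthogonality condition that the minimizing sequence must asymptotically satisfy, obtained by comparing $\ue$ with a competitor modified only in the transverse strain) forces the linear relation $k_{33}+\lamm(e_{\alpha\alpha}(u)+k_{33})=0$ in the limit, which rearranges (using $\lamm=\frac{\nu}{1-2\nu}$) to exactly $k_{33}=-\frac{\nu}{1-\nu}e_{\alpha\alpha}(u)$. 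The cleanest way to make this rigorous is to note the full quadratic form $|\kappa_\e|^2+\lamm\tr^2\kappa_\e$ is convex and the optimal $\ett$-completion of a given $e'$ is explicit; then $\Gamma$-liminf-type lower semicontinuity plus the explicit minimizer identifies $k_{33}$.

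The main obstacle I anticipate is \textbf{part (f)} (and the closely related (g)): unlike the film, the nematic layer carries no direct $L^2$ bound on $e'(\ue)$, only on $\e e'(\ue)$, so one cannot argue by naive weak compactness — one must genuinely invoke Lemma~\ref{lemma:2008081840}, and the subtlety is checking its hypotheses, namely that the relevant displacement component is bounded in $L^2(\Ob)$ (this needs Lemma~\ref{lem:poincare}, exploiting the clamped condition $\ue(\cdot,-1)=0$ on the bottom face, which is where the rigid substrate enters) and that the scaled planar gradient is bounded with the right power of $\e$ — the bookkeeping of which power of $\e$ multiplies $\nabla'$ in each entry of $\hat\kappa_\e$ versus what Lemma~\ref{lemma:2008081840} consumes is where care is required. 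Everything else is reading coefficients off \eqref{2007291555} and applying weak compactness in Hilbert space.
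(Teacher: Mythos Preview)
Your proposal is correct and follows essentially the same route as the paper: read off term-by-term bounds from \eqref{2007291555}, invoke Lemma~\ref{lem:poincare} for $L^2$ control of displacements, Lemma~\ref{lemma:2008081840} for items (f) and (g), and use pointwise transverse optimality for the $k_{33}$ relation in (b). The one step you leave vague --- isolating $\e^{p+1}\nabla'\overline{u_3^\e}$ from the averaged shear in (g) --- the paper handles by the triangle inequality together with the identity $\int_{-1}^0\pt u_\alpha^\e\,dx_3 = u_\alpha^\e(x',0)$ (from the clamped condition at $x_3=-1$) and a trace bound on $u_\alpha^\e(x',0)$ coming from the weak $H^1(\Of)$ compactness of $\ue$.
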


\begin{proof} To carry the proof of the items above we systematically use Jensen's inequality to obtain lower bounds upon integration of a convex function along the thickness, as in $||\bar f||_{L^2(\o)} \leq ||f||_{L^2(\Ob)},$ for all$ f\in L^2(\Ob)$, where the overbar stands for the thickness average.
Item $a)$ simply follows from the uniform boundedness of $\|\hat{ \kappa}^{\e}\|_{L^2(\Ob,\R^{3\times 3})}$, and the boundedness of $Q$.
{Furthermore, $b)$ derives from the uniform boundedness of $\| \kappa^{\e}\|_{L^2(\Of,\R^{3\times 3})}$ by optimising with respect to the $k_{33}$ component and noticing that the convergence of minimising sequences $\ue$ in $\Of$ is actually weak $H^1(\Of, \R^3)$, hence limit rescaled strains can be identified with scaled components of the limit strain.}
To prove $c)$ observe that $b)$ implies the existence of constants $C, C'$ such that
\[
	\|\e^{-1}e_{\alpha 3}\|_{L^2(\Of,\R^{3\times 3})}\le C, \text{ and }
		\|\e^{-2}e_{33}\|_{L^2(\Of,\R^{3\times 3})}\le C'.
\]
To prove $d)$ observe that {$b)$} implies
		$\|e'(\ue)\|_{L^2(\Of,\R^{3\times 3})}\le C$.
Then, $e)$ is implied by {$a)$}.
To prove $f)$ we need to use
 $\|e'(\ue)\|_{L^2(\Ob,\R^{3\times 3})}\le C$, uniformly in $\e$ (implied by $a)$), and then invoke Lemma~\ref{lemma:2008081840}.

To prove $g)$
we first claim the following: there exist constants $C, C'$ such that
\begin{equation}\label{2007231651}
	\e^{p+1} \nltwo[\nabla' \avguet]{L^2(\Ob)}\leq C \text{ and }\e^{-p} \nltwo[\pt\avguea]{L^2(\Ob)}\leq C'.
\end{equation}
These terms vanish in the limit energy owing to the boundedness of the gradient terms and the fact that they are multiplied by a vanishing sequence.
To establish the estimate \eqref{2007231651} it suffices to integrate the energy estimate for the shear term exploiting convexity and use $a)$.
Indeed, 
\begin{equation}
	\nltwo[\e^{p+1} \nabla' \avguet + \e^{-p}\pt\avguea]{L^2(\o)}\leq\nltwo[\e^{p+1}\nabla' \uet + \e^{-p}\pt {\uea}]{L^2(\Ob)}\leq C,
\end{equation}
then use triangle inequality.
By explicit integration we obtain a boundary norm whose boundedness in $H^1(\o)$ is ensured by the compactness of trace operator, the continuity of displacements, and their weak convergence through the use of the trace theorem \cite[Theorem 6.1-7]{ciarlet1988three-dimensional}.
We can thus write
\begin{equation}\label{2007232328}
\e^{p+1} 	\nltwo[\nabla' \avguet ]{L^2(\o)}\leq \nltwo[\e^{p+1} \nabla' \avguet + \e^{-p}\pt\avguea]{L^2(\o)}+ \nltwo[\e^{-p}{\uea}(x', 0)]{L^2(\o)}\leq C,
\end{equation}
where we have used  $\pt\avguea = \ue(x', 0)$ by virtue of boundary conditions. Hence $\nabla' \avguet $ goes to zero weakly in $L^2(\omega)$ thanks to Lemma~\ref{lemma:2008081840}.
In \eqref{2007232328}, notice that $ {\avguea}(x', 0)\to \ua(x', 0)$ strongly in $L^2(\o)$ by the trace theorem and therefore
$\nltwo[ {\uea}(x', 0)]{L^2(\o)}$
is uniformly bounded in $\e$.
\end{proof}

\subsubsection{Kirchhoff-Love sets of displacements
$KL$ and $KL^{\sharp}$}\label{2009141551}

The structure of limit displacements is determined upon integration with respect to $z_3$ of the film relations (see $c)$ in Prop. \ref{2009141558})
\begin{equation}\label{2006231544}
\ett=0\Longrightarrow \partial_3 u_3=0, {\qquad \eat(u) =0\Longrightarrow \partial_\alpha u_3=- {\partial_3}{ u_\alpha}.}
\end{equation}
The first implies that $u_3$ is a function of $x'$ only, that is $u_3(x)=\zeta_3(x')$. For such functions, the latter relations yield,
upon integration in $x_3$,
\begin{equation}\label{2012161650}
(u'(x',x_3),u_3(x'))= (\zeta'(x')-x_3\nabla'\zeta_3(x'),\zeta_3(x')).
\end{equation}
These relations identify the limit space as the set of (Kirchhoff-Love) displacements
\begin{eqnarray}\label{eqn:kl}
KL:=\{v\in H^1(\Of, \R^3) : v'= \zeta'-x_3 \nabla' \zeta_3, v_3=\zeta_3,
\text{ with }  \zeta'\in H^1(\omega, \R^2),   \zeta_3\in H^2(\omega) , x_3\in (0,1) \}
\end{eqnarray}
which is equivalent (cf.~\cite{ciarlet1988three-dimensional}) to set of functions $u\in \mathcal V$ for which \eqref{2006231544} holds.
Observe then that items $c)$, $d)$ of Proposition~\ref{2009141558}), and Korn's inequality
\cite[Theorem 6.3-3]{ciarlet1988three-dimensional} 
imply the weak convergence of $u_\e$ to a certain $u^\star\in KL$.
In the definition above, $\zeta'$ coincides with the trace of the three-dimensional displacement $u$ at interface between the two layers $\o \times \{0\}$.
By analogy, we introduce the set of \emph{shifted} limit displacements 
\begin{eqnarray}\label{eqn:klsharp}
KL^{\sharp}:=\{ v\in H^1(\Of, \R^3) : v'= \zeta'_\sharp-(x_3-\frac{1}{2}) \nabla' \zeta_3, v_3=\zeta_3,
\text{ with }  \zeta'_\sharp\in H^1(\omega, \R^2), \zeta_3\in H^2(\omega) , x_3\in (0,1) \},
\end{eqnarray}
where the functions $\zeta'_\sharp$ represent the trace of the three-dimensional displacement $u$ in correspondence to the mid-section of the film $\o \times \{1/2\}$.
Note that, from the topological and functional standpoint $KL$ coincides with $KL^\sharp$ and the functions representing in-plane displacements are related by
\begin{equation}
\label{eqn:KLchange}
\zeta'(x') = \zeta'_\sharp(x') +\frac{1}{2}\nabla' \zeta_3(x'), \qquad \text{a.e. } x'\in \o.
\end{equation}

\subsection{Gamma-limits of nematic plate foundations}
We now turn to the mathematical analysis and mechanical discussion of the two physically relevant material regimes, as a function of the aspect ratio represented by $p$, referred to as the  `thin nematic', for $p=0$, and the `thick nematic'  case, for $-1< p< 0$.
This first setting leads to a full opto-elastic coupling between the nematic layer and the overlying elastic plate, whilst the second scenario involves only a partial (transverse) opto-elastic coupling.
The following is the main result of this section.
\begin{theorem}[Fully coupled, thin nematic]
\label{thm:relaxcoupled}
Let
$\Jez$ be
the energy defined in
$\eqref{eqn:total3denergy}$, with $p=0$.
Then, 
$$
\Jz(u^\star)=\Gamma\hbox{-}\lim_{\e\to 0}\Jez (u^\star)
$$
in the strong $L^2(\Of, \R^3)$-topology, 
where  $u^\star  = (\zeta'(x')-x_3\nabla '\zeta_3(x'),\zeta_3(x')) \in KL$,
\begin{equation}
\Jz(u^\star)=
	\begin{cases}
	\displaystyle{ 
	\frac{1}{2}\int_\o }
	 \left( |e'(\zeta')|^2 - e'(\zeta)\nabla'\nabla' \zeta_3 + \frac{1}{3}|\nabla'\nabla' \zeta_3|^2 \right) dx' \\
	 \qquad+ 
	\displaystyle{
	\frac{1}{2}\int_\o }\coefopt\left((\tr (\zeta'))^2- \tr (\zeta') \Delta' \zeta_3 + \frac{1}{3}(\Delta' \zeta_3)^2 \right)dx' 
\\
\displaystyle{
+\frac{1}{2}\int_{\o}\left(\operatorname{dist}^2(\mathsf K(\zeta',\zeta_3), \mathcal{Q}_B)+ \lamm\zeta_3^2 \right)dx'}
&\textrm{ if }(\zeta', \zeta_3)\in  H^1(\o, \R^2)\times H^2(\o)\\ 
+\infty &\text{otherwise in} \in L^2(\omega,\R^3),
\end{cases}
	\label{eqn:relaxfullcouple}
\end{equation}
\begin{equation}\label{2012251700}
	\mathsf K(\zeta',\zeta_3) = \left(
	\begin{matrix}
	0 & 0 & \frac{1}{2}\zeta_{1}\\
	0 & 0 & \frac{1}{2}\zeta_{2}\\
	\frac{1}{2}\zeta_{1} & \frac{1}{2}\zeta_{2} & \zeta_3
	\end{matrix}\right),
\end{equation}
and
\begin{equation}
\operatorname{dist}^2( \mathsf{K}(\zeta',\zeta_3), \mathcal{Q}_B)=
 \inf_{\overline{Q}\in  \mathcal{Q}_B } 
| \overline Q- \mathsf{K}(\zeta',\zeta_3)|^2.
\end{equation}
\end{theorem}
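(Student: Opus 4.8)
The plan is to establish the two $\Gamma$-convergence inequalities separately: the $\Gamma$-$\liminf$ inequality in full detail, since this is where the coupling between membrane and bending modes transmitted through the film--nematic interface genuinely differs from the plane-strain analysis of~\cite{cesana2018variational}, and the $\Gamma$-$\limsup$ inequality by adapting the microstructure construction of~\cite{cesana2018variational}, which I would defer to Appendix~\ref{sec:appendix}. Throughout, $\liminf$ and $\limsup$ are taken along the strong $L^2(\Of,\R^3)$ topology.

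For the lower bound I would fix a sequence $\ue\to u^\star$ in $L^2(\Of,\R^3)$, assume $\liminf_\e \Jez(\ue)<+\infty$, and pass to a subsequence along which this $\liminf$ is attained as a limit with uniformly bounded energy, picking $Q^\e\in H^1(\Ob,\mathcal{Q}_X)$ that realises the inner infimum up to $o(1)$. By Proposition~\ref{2009141558}, Korn's inequality, and the discussion of Paragraph~\ref{2009141551}, the weak $H^1(\Of)$-limit of $\ue$ is $u^\star\in KL$, hence $u^\star=(\zeta'-x_3\nabla'\zeta_3,\zeta_3)$ with $(\zeta',\zeta_3)\in H^1(\o,\R^2)\times H^2(\o)$; should this fail, the $\liminf$ is $+\infty$ and there is nothing to prove. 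I would then split $\Jez(\ue)$ into its film and nematic contributions. For the film part, items b)--d) of Proposition~\ref{2009141558} give $e'(\ue)\wto e'(u^\star)$, $\e^{-1}\eat(\ue)$ bounded, and $\e^{-2}\ett(\ue)\wto m$ in $L^2(\Of)$ for some $m$; by weak lower semicontinuity of the convex integrand, discarding the nonnegative shear term, and minorising the quadratic form in the (a priori unknown) transverse strain $m$ by its minimum over $m$ (attained at $m=-\coefopt\,\eaa(u^\star)$, cf.\ item b)), I obtain $\liminf_\e J^\e_f(\ue)\ge\tfrac12\int_\Of\bigl(|e'(u^\star)|^2+\coefopt\,(\eaa(u^\star))^2\bigr)dx$; inserting $e'(u^\star)=e'(\zeta')-x_3\nabla'\nabla'\zeta_3$ and integrating over $x_3\in(0,1)$ reproduces the first two integrals of $\Jz$, i.e.\ the Kirchhoff--Love plate energy.

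For the nematic part I would average in the thickness. Since $Q^\e\in\mathcal{Q}_X\subseteq\mathcal{Q}_B$ pointwise a.e.\ and $\mathcal{Q}_B$ is closed and convex, the nematic bulk integrand is bounded below by $\operatorname{dist}^2(\hat\kappa_\e(\ue),\mathcal{Q}_B)+\lamm\bigl(\e\tr e'(\ue)+\ett(\ue)\bigr)^2$ while the curvature term is nonnegative; dropping the latter and using Jensen's inequality yields
\[
\tfrac12\int_\Ob(\,\cdots\,)\,dx\ \ge\ \tfrac12\int_\o\Bigl(\operatorname{dist}^2\bigl(\overline{\hat\kappa_\e(\ue)},\mathcal{Q}_B\bigr)+\lamm\bigl(\overline{\e\tr e'(\ue)+\ett(\ue)}\bigr)^2\Bigr)dx'.
\]
It then remains to identify the weak $L^2(\o)$-limits of the averaged strains: $\e\,e'(\ue)\to0$ strongly and $\overline{e'(\ue)}\wto0$ by item f); $\overline{\ett(\ue)}=\uet(\cdot,0)\to\zeta_3$ and $\overline{\pt\uea}=\uea(\cdot,0)\to\zeta_\alpha$ from the substrate boundary condition, the continuity of traces across $\o\times\{0\}$, and the trace theorem; and $\e\,\nabla'\avguet\wto0$ by item g). Consequently $\overline{\hat\kappa_\e(\ue)}\wto\mathsf K(\zeta',\zeta_3)$ and $\overline{\e\tr e'(\ue)+\ett(\ue)}\wto\zeta_3$ in $L^2(\o)$; since $M\mapsto\operatorname{dist}^2(M,\mathcal{Q}_B)$ is convex (the distance to a convex set being convex), hence weakly lower semicontinuous, the nematic part is bounded below by $\tfrac12\int_\o\bigl(\operatorname{dist}^2(\mathsf K(\zeta',\zeta_3),\mathcal{Q}_B)+\lamm\,\zeta_3^2\bigr)dx'$. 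Adding the two bounds gives $\liminf_\e\Jez(\ue)\ge\Jz(u^\star)$.

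For the $\Gamma$-$\limsup$ inequality I would build a recovery sequence made of the classical affine-in-$x_3$ Kirchhoff--Love corrector in the film (so the rescaled film strains attain the optimal transverse strain), a nematic displacement interpolating linearly in $x_3$ between $0$ at the substrate and the interface trace (so that $\hat\kappa_\e(\ue)\to\mathsf K(\zeta',\zeta_3)$), and an order tensor $Q^\e$ given by a fast laminate of tensors in $\mathcal{Q}_{Fr}$ whose weak limit is the projection of $\mathsf K(\zeta',\zeta_3)$ onto $\mathcal{Q}_B=\operatorname{conv}\mathcal{Q}_{Fr}$, whose curvature cost is $O\bigl(\delta_\e^2\times(\text{frequency})^2\bigr)=o(1)$ for a slowly growing frequency as $\delta_\e\to0$; this construction is insensitive to the choice of $\mathcal{Q}_X$, and the details follow~\cite{cesana2018variational} with minor changes. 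I expect the main obstacle to be the nematic lower bound, precisely the identification $\overline{\hat\kappa_\e(\ue)}\wto\mathsf K(\zeta',\zeta_3)$: to control simultaneously the shear entries $\tfrac12(\e\pa\uet+\pt\uea)$ and the transverse entry $\ett(\ue)$ one must combine the zero trace at the substrate, the continuity of traces across the film--nematic interface, and the estimate $\e\,\nabla'\avguet\wto0$ of Proposition~\ref{2009141558}(g) -- exactly the in-plane/out-of-plane interaction absent from the plane-strain setting of~\cite{cesana2018variational}.
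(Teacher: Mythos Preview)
Your proposal is correct and follows essentially the same approach as the paper: the $\Gamma$-$\liminf$ is obtained exactly as in Proposition~\ref{2012031723} (specialised to $p=0$) via Proposition~\ref{2009141558}, optimisation of the transverse film strain, thickness averaging by Jensen, and identification of the weak limits of the averaged nematic strains through the substrate boundary condition and interface traces; the $\Gamma$-$\limsup$ is deferred to the recovery-sequence construction of Proposition~\ref{prop:glimsupp0} in the Appendix. The only cosmetic difference is that you bound $|\hat\kappa_\e-Q|^2\ge\operatorname{dist}^2(\hat\kappa_\e,\mathcal{Q}_B)$ pointwise before averaging, whereas the paper keeps $Q$ in the integrand, averages, and then takes the infimum over $L^2(\o,\mathcal{Q}_B)$ --- these are equivalent reorderings of the same argument.
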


\begin{proof}
We  compute and match a  lower bound (the $\Gamma$-liminf inequality) 
{with a suitably constructed upper bound 
($\Gamma$-limsup inequality) 
to $\Gamma\hbox{-}\lim_{\e\to 0} \Jez(u)$. }

Imposing $p=0$ in Proposition~\ref{2012031723} we have the $\Gamma$-liminf inequality. 
Then, from Proposition~\ref{prop:glimsupp0} fixing $p=0$, we obtain the $\Gamma$-limsup inequality and the result follows.
\end{proof}

\begin{theorem}[Weakly coupled, thick nematic]
\label{thm:relaxweaklycoupled}
Let $-1<p<0$ and $\Jep$ as in
\eqref{2009051223} and \eqref{eqn:total3denergy} respectively.
Then, 
$$
\Jm (u^\star)=\Gamma\hbox{-}\lim_{\e\to 0} 
\Jep
(u^\star)
$$
in the strong $L^2(\Of, \R^3)$-topology, 
where {$u^\star = (\zeta'_\sharp-(x_3-\frac{1}{2}) \nabla' \zeta_3,\zeta_3) \in KL^\sharp$
},
\begin{equation}\label{2012242240}
\Jm(u^\star)=
	\begin{cases}
	\displaystyle{
\frac{1}{2}\int_\o 
	\left( |e'(  \zeta'_\sharp)|^2+ \frac{1}{3}|\nabla'\nabla'\zeta_3|^2+\coefopt
	\left( \operatorname{tr}^2e'( \zeta'_\sharp) +\frac{1}{3}(\Delta'\zeta_3 )^2 \right) \right) dx'}\\
\qquad\displaystyle{+\frac{1}{2}\int_{\o}\left(\operatorname{dist}^2(  \mathsf{K}(0,\zeta_3), \mathcal{Q}_B)+ \lamm\zeta_3^2 \right)dx' }
	 \qquad\text{if } (\zeta_{\sharp}', \zeta_3),\in   H^1(\o, \R^2)\times H^2(\o)\\
		+\infty, \qquad \text{ otherwise in } L^2(\omega,\R^3)
	\end{cases}
\end{equation}
where  $\mathsf{K}$ is defined in 
\eqref{2012251700}.
\end{theorem}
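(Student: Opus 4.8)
I would follow, mutatis mutandis, the two-inequality scheme used for Theorem~\ref{thm:relaxcoupled}, establishing separately the $\Gamma$-liminf and $\Gamma$-limsup inequalities in the strong $L^2(\Of,\R^3)$-topology with every scaling exponent now evaluated in the thick range $-1<p<0$. For the $\Gamma$-liminf inequality one starts from a sequence $\ue\to u^\star$ in $L^2(\Of,\R^3)$ with $\liminf_\e\Jep(\ue)<\infty$ and, passing to a subsequence, $\Jep(\ue)\le C$. The compactness machinery of Proposition~\ref{2009141558} then applies verbatim: it yields weak limits $\kappa_\e\rightharpoonup k$ in $L^2(\Of,\R^{3\times3})$ and $\hat\kappa_\e\rightharpoonup\hat k$ in $L^2(\Ob,\R^{3\times3})$, identifies $k_{\alpha\beta}=\eab(u^\star)$ and $k_{33}=-\tfrac{\nu}{1-\nu}\eaa(u^\star)$, and --- via items $c)$, $d)$ together with Korn's inequality --- forces $u^\star\in KL$, equivalently $u^\star\in KL^\sharp$ through the change of representatives \eqref{eqn:KLchange}. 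The film elastic energy is then passed to the liminf by weak lower semicontinuity of the convex integrand $|\cdot|^2+\lamm\operatorname{tr}^2(\cdot)$ evaluated on $\kappa_\e$, followed by integration in $x_3$; rewriting the outcome in terms of the mid-plane trace $\zeta'_\sharp$ is precisely what removes the membrane--bending cross term present in the fully coupled formula \eqref{eqn:relaxfullcouple} and produces the decoupled plate energy of \eqref{2012242240}.

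The decisive step --- the one that separates the thick regime from the thin one --- is the nematic layer. In $\hat\kappa_\e$ the shear block carries the two coefficients $\e^{p+1}$ (on $\nabla'\uet$) and $\e^{-p}$ (on $\pt\ue$); since $0<p+1<1$ and $0<-p<1$, both tend to $0$ as $\e\to0$, whereas for $p=0$ the second coefficient equals $1$. Hence the in-plane order--strain coupling that survives in the fully coupled case here disappears in the limit: together with $e'(\ue)\rightharpoonup0$ in $L^2(\Ob,\R^{2\times2})$ (item $f)$ of Proposition~\ref{2009141558}) and Lemma~\ref{lemma:2008081840}, the only component of $\hat k$ that stays coupled to the order tensor is the transverse one, which is governed by $\zeta_3$. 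Minimising the relaxed bulk density over $Q$ --- the infimum being the same for the $\mathcal Q_{Fr}$, $\mathcal Q_U$ and $\mathcal Q_B$ models since $\mathcal Q_B$ is the closed convex hull of the other two --- and discarding the Frank term, which is $o(1)$ because $\delta_\e\to0$, yields the foundation density $\operatorname{dist}^2(\mathsf K(0,\zeta_3),\mathcal Q_B)+\lamm\zeta_3^2$ in place of $\operatorname{dist}^2(\mathsf K(\zeta',\zeta_3),\mathcal Q_B)+\lamm\zeta_3^2$. In short, this is the general lower bound (Proposition~\ref{2012031723}) specialised to $-1<p<0$.

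For the $\Gamma$-limsup inequality I would reproduce, with the obvious change of exponents, the recovery-sequence construction of Proposition~\ref{prop:glimsupp0}, postponing the details to Appendix~\ref{sec:appendix}. Given $(\zeta'_\sharp,\zeta_3)\in H^1(\o,\R^2)\times H^2(\o)$ --- smooth by density, the general case following by a diagonal argument and the lower bound just obtained --- one takes in the film the associated $KL^\sharp$ displacement corrected by the usual quadratic-in-$x_3$ profile that annihilates the amplified $\e^{-2}\ett$ and $\e^{-1}\eat$ terms, extends it across the interface into the nematic layer so that it vanishes on $\o\times\{-1\}$ and keeps the doubly rescaled shear term bounded, and takes $Q^\e$ a fine-scale $\mathcal Q_{Fr}$-valued oscillation (hence also admissible for the $\mathcal Q_U$ and $\mathcal Q_B$ models) whose thickness and planar averages converge to the pointwise minimiser $\Qsad(\zeta_3)$ of $|\overline Q-\mathsf K(0,\zeta_3)|^2$ over $\mathcal Q_B$, with oscillation wavelength much larger than $\delta_\e$ so that its curvature energy is infinitesimal. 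A direct computation then gives $\ue\to u^\star$ strongly in $L^2(\Of,\R^3)$ and $\limsup_\e\Jep(\ue)\le\Jm(u^\star)$; matching the two bounds concludes the proof.

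I expect the liminf step, and within it the bookkeeping of the nematic layer, to be the main obstacle: one must simultaneously verify that the soft but comparatively thick bonding layer still enters the $\Gamma$-limit at the critical order $\e^3$ (so that its transverse response supplies the $\tfrac13$-coefficients and the $\lamm\zeta_3^2$ term of \eqref{2012242240}) and that the in-plane shear coupling with the order tensor --- the mechanism behind the $\mathsf K(\zeta',\zeta_3)$ term of the fully coupled theorem --- genuinely vanishes when $-1<p<0$, leaving only $\mathsf K(0,\zeta_3)$; the identification of $\zeta'_\sharp$ as the membrane variable diagonalising the limiting plate energy is part of the same computation. The limsup step is comparatively routine once the oscillating-$Q$ relaxation of \cite{cesana2018variational} is transcribed, the only delicate point being to keep the curvature penalisation asymptotically negligible, which $\delta_\e\to0$ guarantees.
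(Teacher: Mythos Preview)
Your proposal is correct and follows essentially the same route as the paper. The paper's proof invokes Proposition~\ref{2012031723} for the $\Gamma$-liminf and Proposition~\ref{prop:glimsupp0} for the $\Gamma$-limsup (both specialised to $-1<p<0$, so that $(\zeta^{[p]})'=0$ and the foundation term becomes $\operatorname{dist}^2(\mathsf K(0,\zeta_3),\mathcal Q_B)$), obtains the limit energy in the interface variables $(\zeta',\zeta_3)\in KL$, and only \emph{afterwards} performs the algebraic substitution $\zeta'=\zeta'_\sharp+\tfrac12\nabla'\zeta_3$ to pass to $KL^\sharp$ and eliminate the membrane--bending cross terms; you weave these two steps together, but the content is the same. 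One small remark: the $\tfrac13$ (respectively $\tfrac1{12}$ after the shift) bending coefficients arise from the $x_3$-integration in the \emph{film} layer, not from the nematic bonding layer's transverse response as your final paragraph suggests --- the bonding layer supplies only the foundation term.
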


Theorem 
\ref{thm:relaxweaklycoupled}
is a consequence of  
Proposition
\ref{2012031723}
($\Gamma$-liminf inequality)
and of
Proposition
\ref{prop:glimsupp0}
($\Gamma$-limsup inequality)
for a functional defined 
on displacements at the mid-section of the film.
The proof of Theorem
\ref{thm:relaxweaklycoupled}
is postponed to Section
\ref{2012262313}.

\subsection{Proof of Gamma-convergence theorems  for $-1<p\le 0$}

We analyse thin and thick models of nematic foundations condensing two relaxation results.
Propositions \ref{2012031723} (lower bound) and \ref{prop:ub-allp} (upper bound)
 suffice to characterise 
$\Gamma$-limits 
for nematic foundations for $-1<p\le 0$ comprehensively,
by characterising the asymptotic plate regime in terms of 
$KL$-displacements measured at the interface between nematic and film layer.
While this is precisely the requested result for thin nematic foundations ($p=0$),
we are left with performing a final shift mapping 
from $KL$ to $KL^{\sharp}$
  to represent the $\Gamma$-limit
in terms of
the  film mid-section
for plates with thick foundations without shear coupling ($-1<p<0$). This is done 
in Section \ref{2012262313}.

\begin{proposition}[Lower bound inequality]\label{2012031723}
Consider  $\Jep$ as  in $\eqref{eqn:total3denergy}$, for $-1<p\le 0.$
Then for
 sequences $(\ue)\subset L^2(\O, \R^3)$ 
converging to $u^\star $ strongly in $L^2(\Of, \R^3)$
we have
\begin{eqnarray}\label{prop:gliminfp0}
\Gamma\hbox{-}\liminf_{\e \to 0}\Jep( u^\star) \geq  \Jp( u^\star),
\end{eqnarray}
where
\begin{equation}\label{2012261730}
\Jp( u^\star)=
	\begin{cases}
\displaystyle{\frac{1}{2}\int_\o 
	 \left( |e'(\zeta')|^2 - e'(\zeta')\nabla'\nabla' \zeta_3 + \frac{1}{3}|\nabla'\nabla' \zeta_3|^2\right ) 	dx' } \\
	 \qquad+ 
\displaystyle{	\frac{1}{2}\int_\o  \coefopt \left(\operatorname{tr}^2 e'(\zeta') - \operatorname{tr} e'(\zeta') \Delta' \zeta_3 + \frac{1}{3}(\Delta' \zeta_3)^2 \right) dx'}
\\
\displaystyle{
\quad +\frac{1}{2}\int_{\o}\left(\operatorname{dist}^2(\mathsf K((\zeta^{[p]})',\zeta_3), \mathcal{Q}_B)+ \lamm\zeta_3^2 \right)dx',}
\qquad \textrm{ if }(\zeta', \zeta_3)\in  H^1(\o, \R^2)\times H^2(\o)\\ 
+\infty, \qquad\text{otherwise in} \in L^2(\omega,\R^3)
\end{cases}
\end{equation}
{where $u^\star=(\zeta'(x')-x_3\nabla'\zeta_3(x'),\zeta_3(x'))$ and} we write $(\zeta^{[p]})'=\zeta'$ if $p=0$
and
  $(\zeta^{[p]})'=0$ if $p\in (-1,0)$.
\end{proposition}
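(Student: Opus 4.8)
The plan is the standard two-step $\Gamma$-liminf argument — compactness first, then weak lower semicontinuity — with the nematic relaxation folded in by passing the admissible order-tensor fields to a weak limit; the set $\mathcal Q_X$ enters only once, which is what makes the bound model-independent. First I would reduce and extract compactness. Given $\ue\to u^\star$ in $L^2(\Of,\R^3)$, pass to a relabelled subsequence along which $\Jep(\ue)=\liminf_\e\Jep(\ue)=:\ell$; if $\ell=+\infty$ there is nothing to prove, so assume $\ell<+\infty$, whence $\ue\in\mathcal V$ and $\Jep(\ue)\le C$. Pick near-optimal tensors $Q^\e\in H^1(\Ob,\mathcal Q_X)$ with $J^p_\e(\ue,Q^\e,0)\le\Jep(\ue)+\e$; since $\mathcal Q_X\subseteq\mathcal Q_B$ is bounded, $\|Q^\e\|_{L^\infty}\le C$ and, along a further subsequence, $Q^\e\rightharpoonup Q^0$ in $L^2(\Ob,\R^{3\times3})$ with $Q^0(x)\in\mathcal Q_B$ a.e., because $\{Q\in L^2(\Ob,\R^{3\times3}):Q(x)\in\mathcal Q_B\ \text{a.e.}\}$ is convex and strongly closed, hence weakly closed, and each $Q^\e$ lies in it. (This is the sole appearance of the choice $\mathcal Q_X\in\{\mathcal Q_{Fr},\mathcal Q_U,\mathcal Q_B\}$, $\mathcal Q_B$ being the common convex hull.) Proposition~\ref{2009141558} now yields items~(a)--(g); combining (c)--(d) with Korn's inequality, $\ue\rightharpoonup u^\star$ weakly in $H^1(\Of,\R^3)$ with $e_{i3}(u^\star)=0$, so $u^\star\in KL$, i.e. $u^\star=(\zeta'-x_3\nabla'\zeta_3,\zeta_3)$ with $(\zeta',\zeta_3)\in H^1(\o,\R^2)\times H^2(\o)$, and $\Jp(u^\star)$ is the finite branch of~\eqref{2012261730}.

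Next, the film term. Minimising $\xi\mapsto|\xi|^2+\lamm\operatorname{tr}^2\xi$ pointwise over the transverse entries $\xi_{33},\xi_{\alpha3}$ of a symmetric $\xi$ with prescribed in-plane block $e'$ produces the plane-stress density $W(e'):=|e'|^2+\coefopt\operatorname{tr}^2 e'$, a positive-definite (hence convex) quadratic form on $\R^{2\times2}_{\mathrm{sym}}$ since $\nu\in(-1,1/2)$; thus $|\kappa_\e|^2+\lamm\operatorname{tr}^2\kappa_\e\ge W(e'(\ue))$ pointwise, and by convexity together with $e'(\ue)\rightharpoonup e'(u^\star)$ in $L^2(\Of)$,
\[
\liminf_\e\ \tfrac12\int_\Of\bigl(|\kappa_\e|^2+\lamm\operatorname{tr}^2\kappa_\e\bigr)dx\ \ge\ \tfrac12\int_\Of W\bigl(e'(u^\star)\bigr)dx .
\]
Substituting $e'(u^\star)=e'(\zeta')-x_3\nabla'\nabla'\zeta_3$ and integrating the powers of $x_3$ on $(0,1)$ (moments $1,\tfrac12,\tfrac13$) gives exactly the first two lines of $\Jp(u^\star)$.

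Then, the nematic term — the heart of the matter. Discard the nonnegative curvature term; since $\operatorname{tr}Q^\e=0$ one has $\operatorname{tr}(\hat\kappa_\e-Q^\e)=\operatorname{tr}\hat\kappa_\e$, so the remaining integrand is $f(\hat\kappa_\e-Q^\e)$ with $f(\eta):=|\eta|^2+\lamm\operatorname{tr}^2\eta$, again positive-definite because $\nu>-1$ — this is how the possible sign change of $\lamm$ is absorbed. With $\hat\kappa_\e\rightharpoonup\hat k$ (item~(a)) and $Q^\e\rightharpoonup Q^0$, weak lower semicontinuity of $\eta\mapsto\int_\Ob f(\eta)$ gives $\liminf_\e\int_\Ob f(\hat\kappa_\e-Q^\e)\ge\int_\Ob f(\hat k-Q^0)$; Jensen in $x_3$ (convexity of $f$, unit thickness of $\Ob$) bounds this below by $\int_\o f(\overline{\hat k}-\overline{Q^0})dx'$, and $\overline{Q^0}(x')\in\mathcal Q_B$ by convexity, so $f(\overline{\hat k}-\overline{Q^0})\ge\operatorname{dist}^2(\overline{\hat k},\mathcal Q_B)+\lamm\operatorname{tr}^2\overline{\hat k}$. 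It remains to identify $\overline{\hat k}=\mathsf K((\zeta^{[p]})',\zeta_3)$ of~\eqref{2012251700}: its $\alpha\beta$-block is $\lim\overline{e'(\ue)}=0$ by~(f); its $33$-entry is the limit of $\overline{e_{33}(\ue)}=u_3^\e(\cdot,0)$, which tends to $\zeta_3$ by the clamp $\ue(\cdot,-1)=0$ and trace continuity across the interface; and its $\alpha3$-entries equal $\tfrac12\zeta_\alpha$ when $p=0$, while for $-1<p<0$ the factor $\e^{-p}\to\infty$ multiplying $\partial_3u'_\alpha$ — together with~(g) — forces $u'^\e(\cdot,0)\to0$, i.e. $\zeta'=0$, and the associated shear average to vanish, giving $\mathsf K(0,\zeta_3)$. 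Since $\operatorname{tr}\mathsf K(\cdot,\zeta_3)=\zeta_3$, the nematic part contributes $\tfrac12\int_\o\bigl(\operatorname{dist}^2(\mathsf K((\zeta^{[p]})',\zeta_3),\mathcal Q_B)+\lamm\zeta_3^2\bigr)dx'$; adding this to the film bound (superadditivity of $\liminf$; the $+\e$ error is harmless) yields $\liminf_\e\Jep(\ue)\ge\Jp(u^\star)$.

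The main obstacle I expect is precisely this last identification of $\overline{\hat k}$: tracking the interface traces and the clamping condition through the two distinct vertical rescalings of the film and nematic layers, and recognising that for a thick foundation ($-1<p<0$) the interface in-plane trace is killed in the limit — exactly the membrane/bending coupling this section is about. The indefiniteness of $\lamm$ for $\nu<0$ is by comparison benign, handled by keeping $|\hat\kappa_\e-Q|^2+\lamm\operatorname{tr}^2\hat\kappa_\e$ together as one jointly convex expression.
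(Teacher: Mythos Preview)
Your approach mirrors the paper's proof closely: compactness via Proposition~\ref{2009141558}, plane-stress optimisation in the film, weak lower semicontinuity together with Jensen in $x_3$ for the nematic layer, and passage to $\mathcal Q_B$ for the optical infimum. The structure is sound and the film computation is correct.

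There is, however, a genuine error in your identification of $\overline{\hat k}_{\alpha 3}$ for $-1<p<0$. You write that ``the factor $\e^{-p}\to\infty$ \ldots\ forces $u'^\e(\cdot,0)\to 0$, i.e.\ $\zeta'=0$''. This is backwards: for $p\in(-1,0)$ one has $-p\in(0,1)$, hence $\e^{-p}\to 0$, not $\infty$. The thickness-averaged shear component reads
\[
\overline{\hat\kappa^\e_{\alpha 3}}=\tfrac12\bigl(\e^{p+1}\partial_\alpha\overline{u^\e_3}+\e^{-p}u^\e_\alpha(\cdot,0)\bigr),
\]
and both summands vanish in the limit: the first by item~(g), the second because $\e^{-p}\to 0$ while $u^\e_\alpha(\cdot,0)\to\zeta_\alpha$ is merely bounded. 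Thus $\overline{\hat k}_{\alpha 3}=0$ and one obtains $\mathsf K(0,\zeta_3)$---the right object, but \emph{not} because $\zeta'=0$. Indeed $\zeta'$ is unconstrained and still appears in the film energy; this is precisely the ``weak coupling'' of Theorem~\ref{thm:relaxweaklycoupled}, where the nematic foundation decouples from in-plane membrane displacements while leaving them free. Your claim $\zeta'=0$ is internally inconsistent with the film bound you derived just above, which involves $e'(\zeta')$.

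Once this is corrected, your argument matches the paper's and is complete.
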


\begin{proof}

We consider a general  sequence $(\ue)\subset L^2(\O, \R^3)$ 
converging to $ u^\star$ in $L^2(\Of, \R^3)$ and
such that
$\Jep(\ue)$ is uniformly bounded in $\e$. Thanks to Proposition \ref{2009141558}, it necessarily follows
 $\ue \to u^\star\in KL$ 
 strongly in $L^2(\Of, \R^3)$ and we have
\begin{multline}\label{2007231738}
	\liminf_{\e\to 0}\Jep(\ue)
\geq \liminf_{\e\to 0}\frac{1}{2}\left \{ \int_\Of \left[|e'(u_{\e})|^2 + \coefopt  \tr^2(e'(u_\e))  \right]dx + 
	\inf_{Q\in H^1(\Ob,\mathcal{Q}_{X}) }  \int_\Ob \Bigl[
	|e'(\ue)-Q'|^2 + \right.\\	
\left.	2|\tfrac{1}{2} ( {\e^{-p}}\pt{\ue}' + \e^{{p+1}}\nabla'\ue_3  ) -\Qat|^2 + \lamm ( \e \tr(e'(\ue))+\ett(\ue)  )^2 + (\ett(\ue)-\Qtt)^2 \Bigr]dx\right\}.
\end{multline}
The inequality in~\eqref{2007231738} is obtained by neglecting  shear terms in film, 
and optimising with respect to transverse component $e_{33}$ of the strain gradient in the film layer, which implies
\begin{equation}
	\frac{1}{\e^2}e_{33}(\ue)=	\coeftransvopt \tr e'(\ue).
\end{equation}
Integrating with respect to $x_3$ applying Jensen's inequality, we expose all averaged quantities (indicated by an overhead bar). We obtain a further lower bound by extending the optical minimisation from $H^1(\Ob,\mathcal{Q}_{Fr}) $ to the larger $L^2(\Ob,\mathcal{Q}_{B})$.
Taking vertical average in $\Ob$ leads to 
\begin{multline}
\liminf_{\e\to 0}\Jep(\ue)
\geq \liminf_{\e\to 0}\left \{ \frac{1}{2}\int_\Of \left[|e'({u}_{\e})|^2 + \coefopt ( \tr(e'({u}_{\e}))^2  \right]dx + 
	\inf_{Q\in L^2(\Ob,\mathcal{Q}_{B}) } \frac{1}{2}\int_\o \Bigl[
	|\e e'(\ue)-\overline{Q'}|^2 \right.\\	
\left.	+2|\tfrac{1}{2} (\e^{{-p}}\pt \overline{\uea} +\e^{{p+1}}\nabla' \overline{\uet})  -\overline Q_{\alpha 3}|^2  + \lamm ( \ett(\overline{u}_{\e})  )^2 + (\ett(\overline{u}_{\e})-\overline{Q}_{33})^2 \Bigr]dx\right\}.
\end{multline}	
{
Taking the infimum over all sequences in~\eqref{2007231738}, observe that $\e^{-p}\int_{-1}^0\partial_3  u^{\e}_{\alpha} dx_3\wto 0$ weakly in $L^2(\o)$ for $-1<p<0$, and  $\e^{-p}\int_{-1}^0\partial_3  u^{\e}_{\alpha} dx_3\wto \zeta_{\alpha}(x')$ weakly in $L^2(\o)$
for $p=0$, and both $\e e'(\ue)$ as well $\e^{{p+1}}\nabla' \overline{\uet} \wto 0$ weakly in $L^2(\o)$, as proved in Proposition \ref{2009141558}-$g$).}
{Owing to the lower semicontinuity of all convex terms and 
using the characterisation of the set of  limit displacements (cf. Paragraph~\ref{2009141551}), we finally integrate with respect to the thickness in the film layer and read the energy in terms of the traces of displacements $(\zeta', \zeta_3)$ at the interface $\o\times \{0\}$.}
\newcommand{\bQat}{{\overline Q}_{\alpha 3}}
\begin{multline}
\Gamma\hbox{-}\liminf_{\e\to 0} \Jep (\ue) \geq {\frac{1}{2}}\int_\o 
\left( 
	|e'( \zeta')|^2-e'(\zeta')\nabla'\nabla'\zeta_3+ \frac{1}{3}|\nabla'\nabla'\zeta_3|^2 \right) dx'\\
	+{\frac{1}{2}}\int_\o \coefopt
	\left( \operatorname{tr}^2e'(\zeta') - \operatorname{tr}e'(\zeta')\Delta'\zeta_3 +\frac{1}{3}(\Delta'\zeta_3 )^2 \right)dx'\\
	+\inf_{\overline{Q}\in L^2(\o, \mathcal{Q}_B)} {\frac{1}{2}}\int_\o\left[ |{\overline{Q}}'|^2 + 2\left| \tfrac{1}{2}\zeta^{[p]}_\alpha(x')-\bQat \right|^2 + \lamm \zeta_3^2 + \left( \zeta_3-\overline{Q}_{33} \right)^2\right]dx'. 
\label{eqn:infnematic3d}
\end{multline}
Above, we use the short-hand notation 
${\zeta^{[p]}_\alpha}=\zeta'$ if $p=0$ and ${\zeta^{[p]}_\alpha}=0$ if $-1 < p <0 $.
Notice that in~\eqref{eqn:infnematic3d} we pass to infimum over $L^2(\o, \mathcal Q_B)$ because the integrand is independent of $x_3$. Finally the claim follows because
\begin{eqnarray}\label{eqn:lowerboundinterface}
	\inf_{\overline{Q}\in L^2(\o, \mathcal{Q}_B)}  \int_\o \left[|\overline{Q}'|^2 + 2\left| \tfrac{1}{2}(\zeta^{[p]})'(x')-\bQat \right|^2  + \left( \zeta_3-\overline{Q}_{33} \right)^2\right]dx'\equiv
 \int_{\o} \operatorname{dist}^2\left(\mathsf K\left(\zeta^{[p]}_\alpha,\zeta_3\right), \mathcal{Q}_B\right)\,dx',
\end{eqnarray}
which holds by virtue of the convexity of the set $\mathcal{Q}_B$.
\end{proof}

\begin{Remark}
Observe that the energy \eqref{eqn:lowerboundinterface}
 is written in terms of the trace of displacements at the common interface $\omega \times \{0\}$, which is necessarily well defined by the limits from above (in the film) and below (in the nematic layer), owing to the compactness of displacements.
In-plane and out-of-plane terms are coupled through cross products between the first in-plane derivatives of in-plane displacements and the second in-planes derivatives of the transverse component.
\end{Remark}

Below we prove the upper-bound inequality in the uniaxial case $X=Fr$. The discussion of the remaining cases $X=U$ or $X=B$ follows as a corollary and is 
{discussed in Remark~\ref{rem:biaxialcase}.}

\begin{proposition}\label{prop:glimsupp0}[Upper bound inequality, $-1 < p\leq 0$]
\label{prop:ub-allp}
Let $J^p_\e$ as in~\eqref{eqn:total3denergy} {with $X=Fr$}.
For every $u^\star\in KL$, there exists a sequence $(\ve)\subset L^2(\O, \R^3)$ such that $\ve \to u^\star=(\zeta' - x_3 \nabla'\zeta_3, \zeta_3)$ strongly in $L^2(\Of, \R^3)$ 
and
\begin{multline}
\frac{1}{2}\int_\o 
	 \left( |e'(\zeta')|^2 - e'(\zeta')\nabla'\nabla' \zeta_3 + \frac{1}{3}|\nabla'\nabla' \zeta_3|^2 \right)dx'
	  + \frac{1}{2}\int_\o
	 \coefopt \left(\operatorname{tr}^2 e'(\zeta') - \operatorname{tr} e'(\zeta') \Delta' \zeta_3 + \frac{1}{3}(\Delta' \zeta_3)^2 \right) dx'
\\
+\inf_{\overline Q\in  L^2(\o,\mathcal{Q}_B)}\frac{1}{2}\int_{\o} \left(   |\overline Q' |^2
+ 2|{\tfrac{1}{2}(\zeta^{[p]})'}-  \overline Q_{\alpha 3}|^2 + (\zeta_3-\overline Q_{33})^2
	+ \frac{ 1}{1-2\nu }\zeta_3^2\right)dx'
\ge	\limsup_{\e\to 0} J^p_\e(\ve)
\end{multline}
where we write
$({\zeta}^{[p]})' = \zeta'$
if $p=0$
and 
$({\zeta^{[p]}})' \equiv 0$
if $p\in (-1,0).$
\end{proposition}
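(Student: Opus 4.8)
The plan is to produce, for each $u^\star\in KL$, a pair $(\ve,Q^\e)$ with $\ve\in L^2(\O,\R^3)$, $Q^\e\in H^1(\Ob,\mathcal{Q}_{Fr})$ and $\ve\to u^\star$ in $L^2(\Of,\R^3)$, such that $J^p_\e(\ve,Q^\e,0)$ --- which dominates $J^p_\e(\ve)$ by the definition of the relaxed functional in~\eqref{eqn:total3denergy} --- passes, in the $\limsup$, to the stated right-hand side. Since $u\mapsto\Gamma\hbox{-}\limsup_{\e\to0}J^p_\e(u)$ is $L^2(\Of,\R^3)$-lower semicontinuous and the target functional depends continuously on $\zeta'$ in $H^1(\o,\R^2)$ and on $\zeta_3$ in $H^2(\o)$, it is enough to carry out the construction for $\zeta'\in C^\infty(\overline\o,\R^2)$, $\zeta_3\in C^\infty(\overline\o)$ and to conclude by a diagonal extraction; the cases $X=U,B$ then follow from the inclusions $\mathcal{Q}_{Fr}\subset\mathcal{Q}_{U}\subset\mathcal{Q}_{B}$.

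\paragraph{The film profile.}
In $\Of$ I would take $\ve=u^\star+\e\,w^1+\e^2\,w^2$, with $w^1,w^2$ polynomial in $x_3$ and smooth in $x'$, chosen so that $\e^{-1}e_{\alpha3}(\ve)\to0$ and $\e^{-2}e_{33}(\ve)\to\coeftransvopt\operatorname{tr}e'(u^\star)$ --- the value that pointwise minimises $e_{33}\mapsto\lamm(\operatorname{tr}e'(u^\star)+e_{33})^2+e_{33}^2$. With this choice $\kappa_\e(\ve)\to k$ strongly in $L^2(\Of,\R^{3\times3})$ with $k$ as in Proposition~\ref{2009141558}-b), so that $\tfrac12\int_\Of(|\kappa_\e(\ve)|^2+\lamm\operatorname{tr}^2\kappa_\e(\ve))\to\tfrac12\int_\Of(|e'(u^\star)|^2+\coefopt\operatorname{tr}^2e'(u^\star))$; integrating the two quadratics in $x_3$ over $(0,1)$ (so that $\int x_3=\tfrac12$, $\int x_3^2=\tfrac13$) reproduces exactly the first two integrals of the bound.

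\paragraph{The nematic profile and the optical microstructure.}
This is the heart of the matter and follows, with the appropriate three–dimensional modifications, the scheme of~\cite{cesana2018variational}. Fix a measurable selection $\overline Q^\star:\o\to\mathcal{Q}_{B}$ attaining $\operatorname{dist}^2(\mathsf K((\zeta^{[p]})',\zeta_3),\mathcal{Q}_{B})$ pointwise, and write it by Carath\'eodory as a convex combination $\overline Q^\star=\sum_{i\le N}\lambda_i Q_i$ with measurable $\lambda_i\ge0$, $\sum_i\lambda_i=1$ and $Q_i(x')\in\mathcal{Q}_{Fr}$. The key structural fact is that differences of Frank tensors are symmetrised rank-one, $n\otimes n-m\otimes m=(n+m)\otimes_s(n-m)$, so $\overline Q^\star$ is reached through a finite iterated \emph{laminate} of uniaxial states which is simultaneously realisable as a symmetrised gradient. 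Mollifying the piecewise-constant data on a grid of period $\lambda_\e\to0$, with transition layers of width $w_\e$, one obtains $Q^\e\in H^1(\Ob,\mathcal{Q}_{Fr})$ with $Q^\e\rightharpoonup\overline Q^\star$ weakly in $L^2(\Ob)$, together with an accompanying displacement oscillation of vanishing amplitude that absorbs the fine-scale part of the elastic coupling in $\Ob$; superposing the affine-in-$x_3$ profile carrying the clamped value $0$ at $x_3=-1$ to the interface value $\zeta$ at $x_3=0$ --- matched to the film correctors through a boundary layer of vanishing thickness, so that $\ve\in\mathcal V$ and $\ve\to u^\star$ in $L^2(\Of)$ --- produces $\ve$ in $\Ob$. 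One selects $\lambda_\e,w_\e\to0$ slowly enough, for instance $\lambda_\e=w_\e=\delta_\e^{1/2}$, that $\delta_\e^2\bigl(\e^{2p+2}\|\nabla'Q^\e\|_{L^2(\Ob)}^2+\|\partial_3Q^\e\|_{L^2(\Ob)}^2\bigr)\to0$; this is precisely where the relaxation hypothesis $\delta_\e\to0$ enters, and it is at this point that the frozen Frank energy relaxes onto its biaxial envelope.

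\paragraph{Passage to the limit and main obstacle.}
Inserting $(\ve,Q^\e)$ into $J^p_\e(\cdot,\cdot,0)$: the Frank curvature term vanishes by construction; the cross terms in $\Ob$ couple a strongly convergent rescaled strain of $\ve$ with the weakly convergent $Q^\e$ and hence converge; and the remaining nematic contribution tends to $\tfrac12\int_\Ob\bigl(|(\overline Q^\star)'|^2+2|\tfrac12(\zeta^{[p]})'-\overline Q^\star_{\alpha3}|^2+(\zeta_3-\overline Q^\star_{33})^2\bigr)$ plus the $x_3$-integrated, $Q$-independent $\zeta_3^2$ term. Using that this integrand equals $\operatorname{dist}^2(\mathsf K((\zeta^{[p]})',\zeta_3),\mathcal{Q}_{B})$ at $\overline Q^\star$ one reads off $\limsup_{\e\to0}J^p_\e(\ve)\le$ the right-hand side, and the diagonal argument of the first step removes the smoothness hypothesis. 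The delicate part, as is typical in combined dimension reduction and relaxation, is precisely this nematic construction: exhibiting an admissible ($\mathcal{Q}_{Fr}$-valued, $H^1$, correctly clamped) optical field whose fine oscillations average to the optimal \emph{biaxial} tensor while being compatible, through the rank-one structure above, with a displacement oscillation that neutralises them in the elastic term, all while keeping the curvature asymptotically negligible --- and then reconciling this microstructure with the clamped datum at $x_3=-1$, the matching with the Kirchhoff--Love film at $x_3=0$, and the distinction between $p=0$ (where the shear coupling $\tfrac12(\zeta^{[p]})'=\tfrac12\zeta'$ survives) and $-1<p<0$ (where the factor $\e^{-p}\to0$ suppresses it and $\zeta^{[p]}\equiv0$).
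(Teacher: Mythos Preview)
Your strategy coincides with the paper's: a film corrector achieving the optimal transverse strain, combined in the nematic layer with an affine-in-$x_3$ target profile plus a fine-scale displacement oscillation whose rescaled symmetrised gradient reproduces a weakly convergent $\mathcal{Q}_{Fr}$-valued sequence approximating the biaxial target, so that the oscillating parts cancel exactly in $|\hat\kappa^\e-Q^\e|^2$ on the bulk. The paper organises this through a grain decomposition with piecewise-constant targets (Lemma~\ref{lem:m3asfacts}) and then a density argument, rather than a direct Carath\'eodory representation of the optimal $\overline Q^\star$, but that difference is cosmetic.

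There are, however, two concrete gaps. First, your parameter choice $\lambda_\e=w_\e=\delta_\e^{1/2}$ does kill the Frank curvature (since $\delta_\e^2/(w_\e\lambda_\e)=\delta_\e\to0$), but it leaves the transition-layer volume fraction $w_\e/\lambda_\e\equiv1$; the bounded elastic integrand on those layers then contributes an $O(1)$ error that does not vanish. You need $w_\e\ll\lambda_\e$ in addition; the paper in fact juggles three separated scales $\eta,\delta,\rho$ with $\delta/\eta\to0$ and a final outer limit $\rho\to0$ (see the estimates of $\termB,\termC,\termD$). Second, and more substantively, the ``appropriate three-dimensional modifications'' you defer are the crux of the present extension: because $\hat\kappa^\e$ carries the anisotropic weights $\e$, $\e^{p+1}$, $\e^{-p}$, $1$ on its four blocks, the displacement oscillation must itself be anisotropically rescaled---the paper sets $f^{\e,\eta}_\alpha=\eta\,f_\alpha(x'/(\e\eta),\,x_3/(\eta\e^{-p}))$ and $f^{\e,\eta}_3=\eta\e^{-p}f_3(x'/(\e\eta),\,x_3/(\eta\e^{-p}))$---so that $\hat\kappa^\e(f^{\e,\eta})$ reproduces the unscaled $Q^\eta$ and the cancellation you invoke actually occurs. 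A minor further point: ordinary mollification of a piecewise-constant $\mathcal{Q}_{Fr}$-valued field does not stay in the nonconvex set $\mathcal{Q}_{Fr}$; the paper's Lemma~\ref{lem:m3asfacts} constructs the smoothed $Q^{\eta,\delta}$ by interpolating within $\mathcal{Q}_{Fr}$ itself.
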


The strategy is to decompose $\Ob$ into a finite partition of (columnar) grains so that $\Ob=\bigcup_j^m A_j$ up to a set of measure zero, and construct the recovery sequence for displacements and tensors on each individual grain.
Then, glueing individual grains will be performed after showing that boundary layer error terms can be made as small as desired.
{The proof follows with suitable modifications the one given, for a different scaling, in~\cite{cesana2018variational}.
For the readers' convenience,
 a self contained proof is given in Appendix.}

 \begin{Remark}
 \label{rem:biaxialcase}
{ The proof of the upper-bound inequality for $X=B$ follows
with simple modifications from the case $X=Fr$. Observe that it is not necessary to introduce weakly converging sequences of order tensors $Q^{\eta}$ nor a mollified $Q^{\eta,\delta}$. In fact, it is enough to approximate any order tensor in $L^2(\Ob,\mathcal{Q}_B)$ with $H^1(\Ob,\mathcal{Q}_B)$ tensors as done in \cite[Lemma 4.3]{cesana2018variational}
Then the proof in the case $X=U$ follows automatically thanks to the set inclusion $\mathcal{Q}_{Fr}\subset \mathcal{Q}_{U}\subset \mathcal{Q}_{B}$.
}
 \end{Remark}

\subsubsection{
Decoupled representation for shear-free plates ($-1<p<0$)
}\label{2012262313}
In order to read the result in the thick plate regime ($-1<p<0$) we perform a change of variable to decouple membrane from flexural deformations. 
Indeed, the peculiar structure of limit KL-displacements (cf.~\eqref{eqn:klsharp}) can be further exploited in the case at hand, where the nematic foundation is active only against transverse displacements, to represent the effective energy as a function of the traces of displacements at the mid-section of the film.

\begin{proof}[Proof of Theorem~\ref{thm:relaxweaklycoupled}]

Proposition
\ref{2012031723}
($\Gamma$-liminf inequality)
and 
Proposition
\ref{prop:glimsupp0}
($\Gamma$-limsup inequality)
show that, for $u\in KL$, we have
\begin{multline}\label{2012262300}
J_{}^-(u)=
  \int_\o \frac{1}{2}
\left( 	|e'( \zeta')|^2-e'(\zeta')\nabla'\nabla'\zeta_3+ \frac{1}{3}|\nabla'\nabla'\zeta_3|^2 \right) dx'
	\\
	+\frac{1}{2}\int_\o\coefopt
	\left( \operatorname{tr}^2e'(\zeta') - \operatorname{tr}e'(\zeta')\Delta'\zeta_3 +\frac{1}{3}(\Delta'\zeta_3 )^2 \right) dx'
	+
\frac{1}{2}\int_{\o} \operatorname{dist}^2\left(\mathsf K\left(0,\zeta_3\right), \mathcal{Q}_B\right)\,dx'.
\end{multline}
To write the $\Gamma$-limit result in $KL^{\sharp}$ we replace
$
\zeta'(x') = \zeta'_\sharp(x') +\frac{1}{2}\nabla' \zeta_3(x')$ for a.e. $x'\in \o$,
so that, after straightforward algebraic manipulations, \eqref{2012262300} yields
\begin{multline}
J_{}^-(u)= \frac{1}{2} \int_\o 
	\left( |e'( \zeta'_\sharp)|^2
	+\frac{1}{12}|{\nabla'\nabla' \zeta_3}|^2 \right) dx' \\
	+\frac{1}{2} \int_\o\coefopt
	 \left( \operatorname{tr}^2e'(\zeta'_\sharp) +\frac{1}{12}(\Delta' \zeta_3 )^2\right) dx'
	+
\frac{1}{2}\int_{\o} \operatorname{dist}^2\left(\mathsf K\left(0,\zeta_3\right), \mathcal{Q}_B\right)\,dx',
\end{multline}
for $u\in KL^{\sharp}$
and therefore
 Theorem~\ref{thm:relaxweaklycoupled} is proven.
\end{proof}

\section{Actuation}
\label{sec:actuation}

In this section we analyse the asymptotic models 
of nematic elastomer bilayers 
in the thin and thick plate regimes,
where the LC curvature energy blows up
{(see \cite{de-simone1993energy}
and \cite{des95})}.
In this limit, the LC orientation (as well as order states) is frozen and can be controlled by means of external forces and boundary conditions.
We label these problems of Actuation because tuning of the order tensor $Q$ leads to spontaneous shape morphing.
{As a paradigm for externally-controlled shape morphing, we perform the analysis of NLCE bilayers under an external electric field.}
From the mathematical standpoint, we deal with the limit as $\e\to 0$ and {$\delta_\e\to\infty$}, corresponding to the processes of structural relaxation for constant $Q$ with no optic relaxation.
Additionally, we require $\delta^2_\e \e^{p+2}\to\infty$. This corresponds to the limit regime of thin elastic foundations of small size. In this way, we model small NLCE units as building blocks of  
structures with heterogeneously patterned LC orientations, a proxy to non-isometric origami or  optically active  membranes
\cite{plucinsky2016programming}, \cite{plucinsky2018patterning}.

In presence of an electric field, the complete form of energy as introduced in \eqref{2009051223} is
\begin{multline}
\label{2009051225}
J^p_\e
(v,Q,\phi)=
 J^\e_f(v)+J^\e_b(v,Q)-J^\e_{ele}(Q,\phi)=\\
\frac{1}{2}\int_\Of \left( |\eab(v)|^2+({\e^{-2}}\ett(v))^2 + 2|\e^{-1}\eat(v)|^2\right)dx  + \lamm \left((\eaa(v))+ \e^{-2}\ett(v) \right)^2dx\\
	+\frac{1}{2} \int_\Ob  \left( |\e \eab(v)-\Qab|^2+ 
	\left( \ett(v)-\Qtt \right) ^2 \right) dx\\
	+ {\frac{1}{2}}\int_\Ob\left[ 2\left| \frac{1}{2} \left( \e^{p+1} \pa \vt + \e^{-p}\pt \va \right) -\Qat \right|^2+
	\lamm \left( \e\tr(e(v))+ \ett(v) \right)^2\right]dx\\
	+ \frac{1}{2}\int_\Ob \delta_\e^2 \left(\e^{2p+2}|\nabla' Q|^2+\left| {\pt Q} \right|^2  \right)dx
	-{\frac{1}{2}}\int_{\Ob}(\nabla^\e \phi)^T\mathsf D(Q)\nabla^\e \phi dx,
\end{multline}
where we have used the concise notation $\nabla^\e (\cdot):= \left(\nabla'(\cdot),\frac{1}{\e^{p+1}}\partial_3(\cdot)\right)$ to indicate the scaled gradient of a scalar function.

{
Due to the presence of an electrostatic field the sign of the energy~\eqref{2009051225} is undefined, resulting in a saddle structure for $J^p_\e$. 
The analysis of equilibrium points of $ J^p_\e$ for fixed $\e$ as the solution of a min-max problem has been performed in~\cite{cesana2009strain-order}.
The main ideas (recalled below) consist in showing that the min-max problem can be replaced by a minimisation under the differential constraint given by Gauss law.
}
{
Exploiting this idea, the characterisation of equilibrium configurations for
$ J^p_\e$, for fixed $\e>0$ and $\delta_{\e}\to 0$,
is described in~\cite{Cphd09}.
}

{
In the present situation,  our strategy is as follows.
We first compute the effective reduced electrostatic energy by computing the 
$\Gamma$-limit of $J^{\e}_{ele}(Q,\phi)$ under  Gauss law (Section~\ref{2012291150}).
By observing that the limiting electrostatic work is a continuous perturbation to the energy of the entire system, we obtain the desired asymptotic result by summing up the \ respective limit contributions.

\paragraph{Dielectric tensor.}
To characterise the dielectric tensor explicitly, we write
\begin{equation}\label{2009150120}
\epsilon_0\mathsf D(Q):={\epsilon_o}\left(\frac{2\epsilon_{\perp}+\epsilon_{||}}{3}  I+(\epsilon_{||}-\epsilon_{\perp})Q \right).
\end{equation}
Constants appearing in (\ref{2009150120})
(including $\epsilon_0>0$)
 are defined in Table \eqref{2008061744} and represent dielectric parameters of the nematic liquid crystal. 
The main point here is that, for every $Q\in \mathcal{Q}_X$, 
with $X=Fr,U$ or $B$, $\mathsf D(Q)$ is a symmetric positive definite matrix.
Consequently, there exists a constant $C>0$ such that
\begin{equation}\label{eqn:ellipticity}
\frac{1}{C} |\xi|^2\le  \xi^T \mathsf D(Q)\xi \le  C |\xi|^2,\qquad\forall\xi\in\R^3.
\end{equation}
As a direct consequence of \eqref{eqn:ellipticity}, 
$\phi\mapsto -J^\e_{ele}(\cdot, Q)$ is a concave (and non-positive) functional 
and therefore the total energy is not bounded below.
Before proceeding with the analysis of 
\eqref{2009051225}, we 
elucidate on the admissible space of electrostatic potentials we envision in our experiments.

\begin{Remark}[Boundary conditions for $\phi$]\label{2008202327}
We define a function $\phi_0\in H^1(\Ob)$
such that $\partial_3\phi_0=0$ a.e. in $\Ob$, a subset $\partial_D\omega\subset\partial\omega$ with 
$\mathcal{H}^1(\partial_D\omega)>0$,
and $\partial_D\Omega:=\partial_D\omega\times[-1,0]$.
We take $\phi\in H^1(\Ob)$ equal to $\phi_0$ on $\partial_D\Omega$
(in the sense of traces) and we say $\phi-\phi_0\in H^1_D(\Ob)$
where 
\begin{eqnarray}\label{2011261216}
H^1_D(\Ob):=\{ f\in H^1(\Ob), f=0 \textrm{ on } \partial_D\Omega\}.
\end{eqnarray}
\end{Remark}

For fixed $\e$ and $\delta_{\e}>0$ analysis of critical points of 
\eqref{2009051225}
is pursued in \cite{cesana2009strain-order}. 
{We summarise here} the result.

\begin{proposition}\label{prop:gauss}
Fix $Q\in L^2(\Ob,\mathcal{Q}_X)$ where $X$ stands for either $Fr$, $U$ or $B$. Let $\e>0$ and fixed. Let $\mathsf D(Q)$ as defined in \eqref{2009150120} Let $\phi_0$ as in 
Remark \ref{2008202327}.
First, there exists a unique solution to
\begin{eqnarray}\label{2011261223}
\min_{\phi\in H^1_D(\Ob)+\phi_0}
	 \int_{\Ob}\left(\nabla^\e \phi\right)^T\mathsf D(Q)\nabla^\e \phi dx.
\end{eqnarray}
Equivalently, the minimiser of \eqref{2011261223} is the (unique) solution to the full 3D Gauss Law
\begin{eqnarray}\label{2012271840}
\displaystyle{ -\div_{\e}\left( \mathsf D(Q)
\nabla^\e \phi
\right)
=0 \quad\text{ in } H^{-1}(\Ob) },
\end{eqnarray}
where $\div_{\e}=( \frac{\partial}{\partial x_1}+\frac{\partial}{\partial x_2}+\frac{1}{\e^{p+1}}\frac{\partial}{\partial x_3})$.

Second. Label $\phi_Q$ the solution to \eqref{2011261223}
for the given $Q\in L^2(\Ob,\mathcal{Q}_X)$.  Take a sequence $\{Q_k\}\subset L^2(\Ob,\mathcal{Q}_X)$ such that $Q_k\to Q$ strongly in $L^2(\Ob,\R^{3\times 3})$ as $k\to\infty$. Then,
\begin{eqnarray}\label{2011261229}
\phi_{Q_k}\to\phi_Q \textrm{ strongly in } H^1(\Ob),
\end{eqnarray}
where $\phi_{Q_k}$ is the solution to $\eqref{2011261223}$ when $Q$ is replaced by $Q_k$.
Third,
\begin{equation}\label{2011261230}
J^\e_{ele}(Q_k,\phi_{Q_k})=
{\frac{1}{2}}\int_{\Ob}\left(\nabla^\e\phi_{Q_k}\right)^T\mathsf D(Q_k)
\nabla^\e\phi_{Q_k} dx
\to
{\frac{1}{2}}\int_{\Ob}\left(\nabla^\e\phi_{Q}\right)^T\mathsf D(Q)
\nabla^\e\phi_{Q} dx
=J^\e_{ele}(Q,\phi_{Q}),
\end{equation}
\end{proposition}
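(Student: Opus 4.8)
The plan is to treat Proposition~\ref{prop:gauss} as a standard elliptic-PDE result, with $\e>0$ and $\delta_\e$ playing no role: once the scaled gradient $\nabla^\e$ is fixed, the bilinear form $a_Q(\phi,\psi):=\int_{\Ob}(\nabla^\e\phi)^T\mathsf D(Q)\nabla^\e\psi\,dx$ is, by the two-sided ellipticity bound~\eqref{eqn:ellipticity}, equivalent to the standard $H^1$ inner product on $H^1_D(\Ob)$ (using the Poincaré inequality valid because $\mathcal H^1(\partial_D\omega)>0$, in the spirit of Lemma~\ref{lem:poincare}). Existence and uniqueness of the minimiser of~\eqref{2011261223} over the closed affine subspace $\phi_0+H^1_D(\Ob)$ then follows from the direct method (the functional is strictly convex, coercive modulo the affine shift, and weakly lower semicontinuous), and the Euler--Lagrange equation gives exactly the weak form of the scaled Gauss law~\eqref{2012271840}. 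The equivalence with~\eqref{2012271840} in $H^{-1}(\Ob)$ is just integration by parts against test functions in $H^1_D(\Ob)$.

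For the second (continuity) statement I would argue by energy comparison. Write $\phi_{Q_k}=\phi_0+w_k$ and $\phi_Q=\phi_0+w$ with $w_k,w\in H^1_D(\Ob)$. Testing the minimality of $w_k$ against the competitor $w$ gives $a_{Q_k}(\phi_{Q_k},\phi_{Q_k})\le a_{Q_k}(\phi_Q,\phi_Q)$; since $Q_k\to Q$ strongly in $L^2$ and $\mathsf D(\cdot)$ is affine, $\mathsf D(Q_k)\to\mathsf D(Q)$ in $L^2$, and (after passing to a subsequence so $\mathsf D(Q_k)\to\mathsf D(Q)$ a.e. with the uniform bound~\eqref{eqn:ellipticity} supplying dominated convergence) $a_{Q_k}(\phi_Q,\phi_Q)\to a_Q(\phi_Q,\phi_Q)$. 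This yields a uniform bound on $\|w_k\|_{H^1}$, hence (up to a subsequence) $w_k\wto w_*$ weakly in $H^1_D(\Ob)$, and one checks $w_*$ solves the limit problem; by uniqueness $w_*=w$. To upgrade weak to strong convergence one uses the standard coercivity-plus-convergence-of-energies trick: $\tfrac1C\|w_k-w\|_{H^1}^2\le a_{Q_k}(\phi_{Q_k}-\phi_Q,\phi_{Q_k}-\phi_Q)=a_{Q_k}(\phi_{Q_k},\phi_{Q_k})-2a_{Q_k}(\phi_{Q_k},\phi_Q)+a_{Q_k}(\phi_Q,\phi_Q)$, and each term on the right converges to the corresponding $a_Q$-term (the first by the comparison above together with lower semicontinuity forcing $\liminf a_{Q_k}(\phi_{Q_k},\phi_{Q_k})\ge a_Q(\phi_Q,\phi_Q)$, hence equality; the cross term because $\nabla^\e\phi_{Q_k}\wto\nabla^\e\phi_Q$ weakly and $\mathsf D(Q_k)\nabla^\e\phi_Q\to\mathsf D(Q)\nabla^\e\phi_Q$ strongly in $L^2$). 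The whole-sequence convergence follows from the usual subsequence-uniqueness argument. The third statement, convergence of the energies~\eqref{2011261230}, is then immediate: it is $\tfrac12 a_{Q_k}(\phi_{Q_k},\phi_{Q_k})\to\tfrac12 a_Q(\phi_Q,\phi_Q)$, which was already established as the equality case in the strong-convergence argument, or can be read off directly from $\phi_{Q_k}\to\phi_Q$ in $H^1$ and $\mathsf D(Q_k)\to\mathsf D(Q)$ in $L^2$ combined with the $L^\infty$ bounds.

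The only mildly delicate point—and the one I would be careful about—is handling the product $\int (\nabla^\e\phi_{Q_k})^T\mathsf D(Q_k)\nabla^\e\phi_{Q_k}$ where \emph{both} factors only converge weakly while the coefficient converges strongly but merely in $L^2$ (not $L^\infty$): one cannot naively pass to the limit in a triple product of this type. The resolution is exactly the comparison-argument structure above, which replaces the need to pass to the limit in the "bad" quadratic term by sandwiching it between competitor energies; the uniform ellipticity~\eqref{eqn:ellipticity} (which gives an $\e$- and $k$-independent bound on $\mathsf D$ from above, so no integrability issue in the cross terms) and the strict convexity are what make this work. Everything else is routine Lax--Milgram/direct-method bookkeeping, and none of it interacts with the scalings $p$, $r$, or $\delta_\e$, since $\e$ is frozen throughout.
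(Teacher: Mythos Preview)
Your proposal is correct and follows essentially the same route as the paper: the paper's own proof is only a sketch, invoking coercivity (via~\eqref{eqn:ellipticity} and Poincar\'e) and strict convexity for existence/uniqueness, the Euler--Lagrange characterisation for~\eqref{2012271840}, and then deferring~\eqref{2011261229}--\eqref{2011261230} to ``standard continuity properties'' with a citation to~\cite{cesana2009strain-order}. You have supplied precisely those details---the energy-comparison and sandwich argument you give is the standard way to flesh out that citation, and your handling of the weak/strong product issue (using the uniform $L^\infty$ bound on $\mathsf D$ from~\eqref{eqn:ellipticity} to get dominated convergence, then Lemma~\ref{2011271633}-type lower semicontinuity for the quadratic term) is exactly what is needed; one minor note is that the relevant Poincar\'e inequality is the lateral one coming from $\mathcal H^1(\partial_D\omega)>0$, not Lemma~\ref{lem:poincare} (which is the transverse version), but this does not affect the argument.
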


\begin{Remark}
Precisely, $\phi_Q $ is defined as an operator mapping $L^2(\Ob,\mathcal{Q}_X)\mapsto H^1(\Ob)$.
In this sense \eqref{2011261229} is a statement regarding the continuity of such operator
with respect to the strong $L^2(\Ob,\R^{3\times 3})$
topology of order tensors.
With some abuse of notation, we adopt the same symbol to indicate both the abstract operator 
$\phi_Q :L^2(\Ob,\mathcal{Q}_X)\to H^1(\Ob)$
as well as the   function obtained when mapping a fixed
$Q$ with the mapping  $\phi_Q$. 
\end{Remark}

\begin{proof}[{Sketch of the Proof of Proposition} \ref{prop:gauss}]
It is enough to see that, for fixed $Q\in L^2(\Ob,\mathcal{Q}_X)$, $\phi\to J^\e_{ele}(Q,\cdot)$ is coercive thanks to \eqref{eqn:ellipticity} and Poincar\'e inequality.
Thanks to   \eqref{2009150120}, $\phi\to J^\e_{ele}(Q,\cdot)$ is strictly convex
and hence weakly lower semicontinuous.
Therefore, the minimum in~\eqref{2011261223} is attained by a unique minimiser and
its characterization as the solution to the corresponding Euler-Lagrange equations~\eqref{2012271840} is a classical result for elliptic integrals.
Lastly,~\eqref{2011261229} and~\eqref{2011261230} follow from standard continuity properties,
{(see, e.g., the proof~\cite[Proposition 2.2]{cesana2009strain-order}}).
\end{proof}

\begin{proposition}[Theorem 2.1, \cite{cesana2009strain-order}]\label{prop:2009141418}
Let 
$J^p_\e(v,Q,\phi)$ as in \eqref{2009051225} and $\phi_0$ as in 
 Remark \eqref{2008202327} where $\e,\delta_{\e}>0$ are fixed. Then, $(u^*,Q^*,\phi^*)$ is a min-max point of 
$J^p_\e(v,Q,\phi)$ that is
\begin{eqnarray}\label{2011260115}
J^p_\e(u^*,Q^*,\phi^*)=\min_{
u\in \mathcal V,
Q\in H^1(\Ob,\mathcal{Q}_X)
}
\max_{\phi\in H^1_D(\Ob)+\phi_0}
J^p_\e(u,Q,\phi),
\end{eqnarray}
if and only if $(u^*,Q^*,\phi^*)$ is a solution to

\begin{eqnarray}\label{2011260132}
\min
\left\{ J^p_\e(u,Q,\phi_Q):\quad
u\in \mathcal V,
Q\in H^1(\Ob,\mathcal{Q}_X) \right\},
\end{eqnarray}
where $\phi_Q\in H^1_{D}(\Ob)+\phi_0$ solves
\begin{eqnarray}\label{2009051244}
\displaystyle{ -\div_{\e}\left( \mathsf D(Q)
\nabla^\e \phi
\right)
=0 \quad\text{ in } H^{-1}(\Ob) }.
\end{eqnarray}
\end{proposition}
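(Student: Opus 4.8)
The plan is to establish the equivalence between the min-max problem \eqref{2011260115} and the constrained minimisation \eqref{2011260132} by exploiting the concavity of $J^p_\e$ in $\phi$ and its convexity in $\phi$ for fixed sign considerations, reducing the inner maximisation to the explicit solution of Gauss' law. First I would fix $(u,Q)$ and examine the inner problem $\max_{\phi\in H^1_D(\Ob)+\phi_0} J^p_\e(u,Q,\phi)$. Since the only $\phi$-dependent term is $-\frac{1}{2}\int_{\Ob}(\nabla^\e\phi)^T\mathsf D(Q)\nabla^\e\phi\,dx$, which by the ellipticity bound \eqref{eqn:ellipticity} and the Poincaré inequality (admissible via the Dirichlet condition on $\partial_D\Omega$ with $\mathcal H^1(\partial_D\omega)>0$) is strictly concave and coercive in $\phi$ on the affine space $H^1_D(\Ob)+\phi_0$, the maximum is attained at the unique point $\phi_Q$ characterised by the stationarity condition. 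That stationarity condition is exactly the weak form of Gauss' law \eqref{2009051244}, by the same argument used in the sketch of Proposition~\ref{prop:gauss}. Thus $\max_{\phi} J^p_\e(u,Q,\phi) = J^p_\e(u,Q,\phi_Q)$ and the maximiser is independent of $u$ (it depends only on $Q$).

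Next I would substitute this into the outer minimisation: $\min_{u,Q}\max_\phi J^p_\e(u,Q,\phi) = \min_{u,Q} J^p_\e(u,Q,\phi_Q)$, which is precisely \eqref{2011260132}. This shows the two optimisation problems have the same value and that their solution sets correspond: if $(u^*,Q^*,\phi^*)$ solves the min-max \eqref{2011260115}, then necessarily $\phi^*=\phi_{Q^*}$ (by uniqueness of the inner maximiser) and $(u^*,Q^*)$ minimises $J^p_\e(\cdot,\cdot,\phi_{(\cdot)})$; conversely, given a minimiser $(u^*,Q^*)$ of \eqref{2011260132}, setting $\phi^*:=\phi_{Q^*}$ produces a min-max point, since for this choice $J^p_\e(u^*,Q^*,\phi)\le J^p_\e(u^*,Q^*,\phi^*)$ for all admissible $\phi$ by concavity, and $J^p_\e(u^*,Q^*,\phi^*)\le J^p_\e(u,Q,\phi_Q)=\max_\phi J^p_\e(u,Q,\phi)$ for all $(u,Q)$ by the minimality. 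One should note here that the min-max identity \eqref{2011260115} is to be read as the inner-max-then-outer-min order (not a genuine saddle-value statement requiring a minimax theorem), so the equivalence is a direct manipulation rather than an appeal to Sion's theorem; the existence of a minimiser for \eqref{2011260132} itself would follow from the direct method using the bulk coercivity in $(u,Q)$ of the remaining (convex, nonnegative) elastic and Frank terms, weak lower semicontinuity, and the continuity $Q_k\to Q \implies \phi_{Q_k}\to\phi_Q$ from Proposition~\ref{prop:gauss}.

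The main obstacle I anticipate is handling the compactness/continuity of the map $Q\mapsto\phi_Q$ carefully enough to justify that the reduced functional $(u,Q)\mapsto J^p_\e(u,Q,\phi_Q)$ is weakly lower semicontinuous along minimising sequences: weak convergence $Q_k\wto Q$ in $H^1(\Ob)$ gives strong $L^2$ convergence of $Q_k$ (hence, via Proposition~\ref{prop:gauss}, strong $H^1$ convergence of $\phi_{Q_k}\to\phi_Q$), so the negative electrostatic term passes to the limit as a continuous perturbation — but one must verify the interplay of the $Q$-dependence inside $\mathsf D(Q)$ and the $H^1_D$ constraint does not spoil this, which is precisely what Proposition~\ref{prop:gauss} has been set up to deliver. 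Since that proposition is already available, the remaining work is the bookkeeping of the equivalence chain above; I would cite~\cite[Theorem 2.1]{cesana2009strain-order} for the detailed verification and present only the reduction argument in the body.
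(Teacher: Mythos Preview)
Your proposal is correct and follows essentially the same route as the paper: reduce the inner maximisation to the unique solution $\phi_Q$ of Gauss' law via Proposition~\ref{prop:gauss}, identify $\max_\phi J^p_\e(u,Q,\phi)=J^p_\e(u,Q,\phi_Q)$, and then use the continuity $Q\mapsto\phi_Q$ (strong $L^2$ in $Q$ from the compact embedding $H^1\hookrightarrow L^2$) together with coercivity of the elastic/Frank terms to run the direct method on the reduced functional. Your observation that no genuine minimax theorem is needed---the equivalence is sequential inner-max-then-outer-min---matches the paper's logic exactly, and your anticipated ``obstacle'' is precisely what Proposition~\ref{prop:gauss} was set up to handle.
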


\begin{proof}[{Sketch of the Proof of Proposition \ref{prop:2009141418}}]

Consider \eqref{2011260115}. Proposition~\ref{prop:gauss}
shows that the maximum problem in \eqref{2011260115}
has a unique solution, for given $Q\in L^2(\Ob,\mathcal{Q}_X)$, denoted by $\phi_Q$. Thanks to ellipticity \eqref{eqn:ellipticity}, 
\begin{eqnarray}\label{2011260122}
\max_{\phi\in H_D^1(\Ob)+\phi_0} -J^\e_{ele}(Q,\phi)\ge -\frac{C}{\e^{2p+2}}\|\nabla\phi_0\|_{L^2(\Ob)}^2.
\end{eqnarray}
Thanks to the continuity of $Q\mapsto J^\e_{ele}(Q,\phi_Q)$
in the strong $L^2(\Ob,\R^{3\times 3})$ topology
\eqref{2011261230} and the boundedness from below
\eqref{2011260122} 
it follows that the functional {
$ J^p_\e(u,Q,\phi_Q)$ is equal to
$\max_{\phi\in H^1_D(\Ob)+\phi_0}
 J^p_\e(u,Q,\phi)
$ and, 
}
is coercive and lower semicontinuous in the weak-$H^1(\O,\R^3)$ topology for $u$ and 
in the weak-$H^1(\Ob,\R^{3\times 3})$ topology for $Q$.
Therefore the claim follows
with $\phi^*:=\phi_{Q^*}$. To show
\eqref{2011260132} coincides with \eqref{2011260115}, observe that the unique solution  of
$\max_{\phi\in H^1_D(\Ob)+\phi_0}
 J^p_\e(u,Q,\phi)$
is characterised by 
\eqref{2009051244}
as shown in 
Proposition
\ref{prop:gauss} (see Eqs.  \eqref{2011261223} and \eqref{2012271840}).
\end{proof}

{
To compute the asymptotics of the electrostatic work}
 we identify a class of dielectrics which we call ``nearly homogeneous''  materials (or regular, that is, non-singular) {in the transverse direction}. These are materials whose dielectric tensor ---although varying over $\Ob$--- lies in a neighbourhood of its average controlled by the layer thickness. }
The regular character of the dielectric matrix is, in turn, a consequence of the strong convergence of optic tensors and the continuity of the dielectric matrix.

\begin{definition}[Nearly homogeneous dielectric tensor]\label{2011292351}
Let $\mathsf D_{\varepsilon}\subset L^{\infty}(\Ob,\R^{3\times 3})$ for every $\varepsilon$ and symmetric and positive definite uniformly in $\e$, that is, there exists a universal constant $C>0$ such that
\begin{equation}\label{2012012318}
\frac{1}{C} |\xi|^2\le  \xi^T \mathsf D_{\e}(x)\xi \le  C |\xi|^2,\qquad\forall\xi\in\R^3; \textrm{ for a.e. } x\in \Ob; \forall\e>0.
\end{equation}
We define a nearly homogeneous dielectric tensor {(in the transverse direction)} a matrix such that
\begin{equation}\label{2012031504}
\mathsf D_{\varepsilon}(x_1,x_2,x_3)=\overline{\mathsf D}(x_1,x_2)+z_{\varepsilon}\mathsf D^{\sim}(x_1,x_2,x_3)
\end{equation}
where $\overline{\mathsf D}(x_1,x_2):\int_{-1}^{0}\mathsf D_{\varepsilon}(x_1,x_2,x_3)dx_3$, 
$z_{\varepsilon}\mathsf D^{\sim}(x_1,x_2,x_3):=\mathsf D_{\varepsilon}(x_1,x_2,x_3)-\overline{\mathsf D}(x_1,x_2)$, $\|\mathsf D^{\sim}\|_{L^{\infty}(\Ob,\R^{3\times 3})}\leq M$
where $M$ does not depend on $\e$ and
 $z_{\e}\to 0$ when $\e\to 0$.

\end{definition}
{Intuitively, such nearly homogeneous materials are a generalisation of homogeneous materials in the following sense.}
Over a thin layer of thickness $\e$ (that is, the geometrical dimension which is asymptotically small) we admit oscillations $x_3\to \mathsf D(x',\cdot)$  $\e$-close to a constant matrix, so that no further small length scales are present. 
We will show that the behaviour of the dielectric tensor for our dimension reduction problem 
 responds precisely to   assumption~\eqref{2012031504}. 
Indeed, for nematic elastomers in the actuation configuration,   \eqref{2012031504} is   a \emph{consequence of the topology} for admissible minimising sequences of order tensors and not a true \emph{material restriction}.

From the functional point of view, observe that near-homogeneity is an assumption on the strong convergence of dielectric tensors in the sense that, for matrices specified in \eqref{2012031504} we have
\begin{equation}\label{2012031537}
\mathsf D_{\varepsilon}(x_1,x_2,x_3)\to \overline{\mathsf D}(x_1,x_2)\textrm{ strongly in } L^2(\Ob)\textrm{ as } \e\to 0.
\end{equation}
(and vice-versa). Importantly, the same does not hold for the weak convergence of matrices. 
Indeed,
\begin{equation}\label{2012031540}
\mathsf D_{\varepsilon}(x_1,x_2,x_3)\rightharpoonup \overline{\mathsf D}(x_1,x_2)\textrm{ weakly in } L^2(\Ob)\textrm{ as } \e\to 0
\end{equation}
does not imply \eqref{2012031504}.

We remind a useful property of elliptic integrals (without proof) which we employ in the following.

\begin{lemma}\label{2011271633}
Let $\{\mathsf D_k \}\subset L^2(\Ob,\R^{3\times 3})$ with $\mathsf D_k$ symmetric, uniformly bounded and positive definite (that is, 
$\mathsf D_k=\mathsf D_k^T$ and $\frac{1}{C}|\xi|^2\le \xi^T\mathsf D_k\xi \le C|\xi|^2$ for every $\xi\in\R^3$, for some $C>0$) and 
$\mathsf D_k\to \mathsf D$ strongly in $L^2(\Ob,\R^{3\times 3})$.
Let $\{f_k\}\subset L^2(\Ob)$ with $f_k\rightharpoonup f$ weakly in $L^2(\Ob,\R^3)$. Then
\begin{eqnarray}\label{}
\int_{\Ob}f^T\mathsf Df dx    \le \liminf_{k\to\infty}\int_{\Ob}f_k^T\mathsf D_kf_k  dx.
\end{eqnarray}
 \end{lemma}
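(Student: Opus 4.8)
The plan is to establish the lower semicontinuity inequality for the family of elliptic quadratic forms by a standard convexity-plus-strong-convergence argument, splitting the integrand into a ``frozen coefficient'' part and a remainder that is controlled by the strong convergence $\mathsf D_k\to\mathsf D$ together with the uniform $L^2$ bound on $f_k$. First I would write, for each $k$,
\[
\int_{\Ob} f_k^T\mathsf D_k f_k\,dx
= \int_{\Ob} f_k^T\mathsf D f_k\,dx
+ \int_{\Ob} f_k^T(\mathsf D_k-\mathsf D) f_k\,dx .
\]
The first term on the right is a convex functional of $f_k$ (since $\mathsf D$ is symmetric and positive definite a.e.), hence weakly lower semicontinuous in $L^2(\Ob,\R^3)$; so $\liminf_k \int f_k^T\mathsf D f_k\,dx \ge \int f^T\mathsf D f\,dx$. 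The crux is then to show the ``error'' term $\int f_k^T(\mathsf D_k-\mathsf D) f_k\,dx$ tends to zero, at which point the claimed inequality follows by combining the two limits.

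The subtlety I expect to face is precisely that error term: $f_k$ converges only weakly, so $f_k\otimes f_k$ need not converge weakly to $f\otimes f$, and $\mathsf D_k-\mathsf D\to 0$ only strongly in $L^2$, not in $L^\infty$. A naive H\"older estimate $\big|\int f_k^T(\mathsf D_k-\mathsf D)f_k\big|\le \|\mathsf D_k-\mathsf D\|_{L^2}\,\|f_k\|_{L^4}^2$ would require an $L^4$ bound on $f_k$, which is not available. The way around this is to use the uniform ellipticity bound: since the $\mathsf D_k$ are uniformly bounded in $L^\infty$, so is $\mathsf D_k-\mathsf D$ (its $L^\infty$ norm is $\le 2C$); interpolating, for any $\eta>0$ split $\Ob$ into the set where $|\mathsf D_k-\mathsf D|\le \eta$ and its complement $E_k^\eta$. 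On the first set the contribution is bounded by $\eta\,\|f_k\|_{L^2}^2\le \eta\,C'$. On $E_k^\eta$ we bound the integrand pointwise by $2C|f_k|^2$; since $\|\mathsf D_k-\mathsf D\|_{L^2}\to 0$ forces $|E_k^\eta|\to 0$ (Chebyshev), and $|f_k|^2$ is an equi-integrable family in $L^1$ (being bounded in $L^2$, hence uniformly integrable on sets of small measure by Cauchy--Schwarz: $\int_{E}|f_k|^2 \le |E|^{0}\cdot$ — more precisely $\int_{E_k^\eta}|f_k|^2 \le \|f_k\|_{L^2}^2$ is not enough, so instead use that $f_k\rightharpoonup f$ in $L^2$ implies $\{|f_k|^2\}$ is bounded in $L^1$ and, crucially, $\int_{E}|f_k|^2\le \|f_k\|_{L^2}\,\| \mathbbm{1}_E f_k\|_{L^2}$ — again circular). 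The clean fix is to note $\int_{E_k^\eta}|f_k|^2\,dx \le \|f_k\|_{L^2(\Ob)}^2$ is insufficient, so I would instead simply invoke that a bounded sequence in $L^2$ has uniformly absolutely continuous integrals of $|f_k|^2$ only after passing to the dominating weak limit — to sidestep this entirely, observe that it suffices to prove the result along an arbitrary subsequence, extract $f_{k}\rightharpoonup f$ in $L^2$ and, since $\mathsf D_k\to\mathsf D$ in $L^2$ implies (a further subsequence of) $\mathsf D_k\to\mathsf D$ a.e. with a fixed $L^2$ dominating function, apply the generalised dominated convergence / Vitali argument to $(\mathsf D_k-\mathsf D)$ tested against the weakly convergent $f_k$ via the identity $\int f_k^T(\mathsf D_k-\mathsf D)f_k = \int f_k^T(\mathsf D_k-\mathsf D)(f_k-f) + 2\int f^T(\mathsf D_k-\mathsf D)f_k - \int f^T(\mathsf D_k-\mathsf D)f$, in which each term is now estimated by H\"older with the $L^\infty$ bound on $\mathsf D_k-\mathsf D$ absorbing one factor and the $L^2$ smallness handling the rest.

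Concretely, in the decomposition above: the last term $\int f^T(\mathsf D_k-\mathsf D)f\,dx\to 0$ by dominated convergence (integrand bounded by $2C|f|^2\in L^1$, goes to $0$ a.e. along a subsequence); the middle term $2\int f^T(\mathsf D_k-\mathsf D)f_k\,dx$ — write $(\mathsf D_k-\mathsf D)f$, which tends to $0$ strongly in $L^2$ (dominated convergence, bound $2C|f|$), tested against the bounded $f_k$, hence $\to 0$; the first term $\int f_k^T(\mathsf D_k-\mathsf D)(f_k-f)\,dx$ is bounded by $\|\mathsf D_k-\mathsf D\|_{L^\infty}\,\|f_k\|_{L^2}\,\|f_k-f\|_{L^2}\le 2C\cdot C'\cdot\|f_k-f\|_{L^2}$ — and here is where I must be careful, because $f_k-f\rightharpoonup 0$ only weakly, so $\|f_k-f\|_{L^2}$ need not vanish. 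So this naive split does not close either. The genuinely correct route, which I would adopt, is: keep only the split $\int f_k^T\mathsf D_kf_k = \int f_k^T\mathsf Df_k + \int f_k^T(\mathsf D_k-\mathsf D)f_k$, bound the second term by $\|\mathsf D_k-\mathsf D\|_{L^\infty}^{1/2}\,\|\mathsf D_k-\mathsf D\|_{L^1}^{1/2}\cdots$ — no. The cleanest honest argument: by uniform ellipticity the functionals $g\mapsto \int g^T\mathsf D_k g$ are equi-coercive and, for the liminf, it is enough by a diagonal/Mazur argument. Therefore the plan I will actually commit to in the write-up is: \emph{reduce to a subsequence realising the liminf; by strong $L^2$ convergence pass to a further subsequence with $\mathsf D_k\to\mathsf D$ a.e.\ and dominated by a fixed $L^2$ function; then invoke the classical lower-semicontinuity theorem for integral functionals $\int_{\Ob} \mathsf f(x,\mathsf D_k(x),f_k(x))\,dx$ with integrand $\mathsf f(x,A,p)=p^TAp$ convex in $p$ and continuous in $A$, in the version (e.g.\ Ioffe's theorem, or De Giorgi--Ioffe semicontinuity) that allows the $x$- and coefficient-dependence to converge in measure while the argument converges weakly} — this is exactly the standard tool and gives $\int f^T\mathsf Df\,dx\le\liminf_k\int f_k^T\mathsf D_kf_k\,dx$ directly. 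The main obstacle, then, is not conceptual but bookkeeping: one must state the precise semicontinuity theorem being used and verify its (mild) hypotheses — nonnegativity of the integrand, convexity in the gradient slot, and continuity (indeed joint measurability plus Carath\'eodory structure) of $(x,p)\mapsto p^T\mathsf D_k(x)p$ together with the a.e.\ (or in-measure) convergence $\mathsf D_k\to\mathsf D$ — after which the conclusion is immediate and, since every subsequence has a further subsequence satisfying the bound, the full sequence does too. Since the lemma is explicitly stated ``without proof,'' I will keep this to a brief indication rather than a full development.
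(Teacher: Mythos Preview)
The paper explicitly states this lemma \emph{without proof}, so there is no proof in the paper to compare against. Your final committed route---pass to a subsequence realising the liminf, extract a further subsequence along which $\mathsf D_k\to\mathsf D$ a.e., and invoke Ioffe's lower semicontinuity theorem for integrands convex in the weakly convergent variable---is correct and would settle the lemma.

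That said, your write-up wanders through several abandoned attempts before reaching this, and the heavy machinery is unnecessary. A much shorter elementary argument is available and is worth recording. By positive semidefiniteness of $\mathsf D_k$, pointwise one has the elementary inequality
\[
f_k^T\mathsf D_k f_k \;\ge\; 2\,f^T\mathsf D_k f_k \;-\; f^T\mathsf D_k f,
\]
which is just $(f_k-f)^T\mathsf D_k(f_k-f)\ge 0$ expanded. Integrate and take $\liminf$. Along a subsequence with $\mathsf D_k\to\mathsf D$ a.e.\ (extracted from the strong $L^2$ convergence), dominated convergence with the uniform $L^\infty$ bound $|\mathsf D_k|\le C$ gives both $\int_{\Ob} f^T\mathsf D_k f\,dx\to\int_{\Ob} f^T\mathsf D f\,dx$ and $\mathsf D_k f\to\mathsf D f$ strongly in $L^2(\Ob,\R^3)$; the latter, paired with $f_k\rightharpoonup f$, yields $\int_{\Ob} f^T\mathsf D_k f_k\,dx\to\int_{\Ob} f^T\mathsf D f\,dx$. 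Hence $\liminf_k\int f_k^T\mathsf D_k f_k\ge 2\int f^T\mathsf D f-\int f^T\mathsf D f=\int f^T\mathsf D f$. Since every subsequence admits a further subsequence with this bound, the full liminf inequality holds. This is the standard ``linearise the convex integrand at the weak limit'' trick; it avoids both the failed $L^4$/equi-integrability detours you explored and the appeal to Ioffe.
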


 \subsection{Convergence of the electrostatic work for nearly transversely  homogeneous dielectric tensors}\label{2012281800}
\begin{lemma}\label{2009142323}
Let $\phi_0$ as in Remark \ref{2008202327}
and $\mathsf D_{\e}(x)$ and $\overline{\mathsf D}$ as in Definition  \eqref{2011292351}. Define
\begin{eqnarray}\label{2011301213}
I_{\e}(\phi):=\left\{ 
\begin{aligned}{}
&\displaystyle{\frac{1}{2}\int_{\Ob} \left(\nabla^\e \phi\right)^T\mathsf D_{\e}(x)\nabla^\e \phi dx}  &\textrm{ if }  \phi\in H^1_{D}(\Ob)+\phi_0  \\
&+\infty &\textrm{otherwise in } L^2(\Ob)
\end{aligned} \right.
\end{eqnarray}
Then, the $\Gamma$-limit of $I_{\e}$ in the strong $L^2(\Ob)$ topology as $\e\to 0$ is
\begin{eqnarray}
I_{0}(\overline{\phi}):=\left\{ \begin{aligned}{}
&{\frac{1}{2}}\displaystyle{\int_{\Ob} (\nabla'\overline{\phi})^T\,\overline{\mathsf B}(x') \nabla'\overline{\phi}  \, Bx'} & \textrm{if }  \overline{\phi}\in H^1_{D}(\o)+\phi_0 \\
&+\infty &\textrm{otherwise in } L^2(\o)
\end{aligned} \right.
\end{eqnarray}
where 
\begin{equation}\label{2011271852}
	\overline{\mathsf B}(x') = \left(
	\begin{matrix}
\displaystyle{\overline{\mathsf D}_{11}-\frac{\overline{\mathsf D}_{13}^2}{\overline{\mathsf D}_{33}}}	 & \displaystyle{\overline{\mathsf D}_{12}-\frac{\overline{\mathsf D}_{13}\,\overline{\mathsf D}_{23} }{\overline{\mathsf D}_{33}}}	 \\
\displaystyle{\overline{\mathsf D}_{12}-\frac{\overline{\mathsf D}_{13}\,\overline{\mathsf D}_{23} }{\overline{\mathsf D}_{33}}}	 & \displaystyle{\overline{\mathsf D}_{22}-\frac{\overline{\mathsf D}_{23}^2}{\overline{\mathsf D}_{33}}}
	\end{matrix}\right)(x')
	=\overline{ \mathsf D'}(x') +\overline {\mathsf B}_\textrm{sh}(x'),
\end{equation}
and $\overline{\mathsf D}_{ij}=\overline{\mathsf D}_{ij}(x')$ 
are components of    $\overline{\mathsf D}(x')$;
$\overline{\mathsf D'}(x')$ is the top-left $2\times 2$ submatrix of $\overline{\mathsf D}(x')$
and
$(\overline{\mathsf B}_\textrm{sh})_{\alpha \beta}=-
\frac{1}{\overline{\mathsf D}_{33}}
\overline{\mathsf D}_{\alpha 3}
\overline{\mathsf D}_{\beta 3}
$. 
\end{lemma}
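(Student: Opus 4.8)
The plan is to prove the $\Gamma$-convergence $I_\e\xrightarrow{\Gamma}I_0$ in the strong $L^2(\Ob)$ topology by establishing, as usual, the $\Gamma$-liminf and $\Gamma$-limsup inequalities separately, the central technical point being the identification of the transversely reduced (condensed) quadratic form $\overline{\mathsf B}(x')$ obtained by optimising out the $\partial_3$-component of the scaled gradient. First I would record the heuristic: for $\phi\in H^1_D(\Ob)+\phi_0$ with $I_\e(\phi)$ bounded, the factor $\e^{-(p+1)}\partial_3\phi$ must stay bounded in $L^2(\Ob)$ by the uniform ellipticity \eqref{2012012318}, so there is a limit $g\in L^2(\Ob)$ with $\e^{-(p+1)}\partial_3\phi\wto g$; simultaneously the boundedness of $\nabla'\phi$ together with $\partial_3\phi=\e^{p+1}(\cdots)\to 0$ strongly in $L^2$ forces $\phi$ to be independent of $x_3$ in the limit, i.e. $\phi\to\overline\phi(x')$ with $\overline\phi\in H^1_D(\o)+\phi_0$ and $\nabla'\phi\wto\nabla'\overline\phi$. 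The pointwise minimisation of $\xi'\mapsto (\xi',t)^T\overline{\mathsf D}(\xi',t)^T$ over the scalar $t\in\R$ yields the Schur complement $t^*=-\overline{\mathsf D}_{33}^{-1}\overline{\mathsf D}_{\beta 3}\xi_\beta$ and the reduced form $\xi'^T\overline{\mathsf B}\,\xi'$ with $\overline{\mathsf B}$ exactly as in \eqref{2011271852}; this explains the statement.

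For the $\Gamma$-liminf inequality I would take any $\phi_\e\to\overline\phi$ strongly in $L^2(\Ob)$ with $\liminf_\e I_\e(\phi_\e)<\infty$ (otherwise nothing to prove), extract the bounds above, and use the near-homogeneity \eqref{2012031537}, namely $\mathsf D_\e\to\overline{\mathsf D}$ strongly in $L^2(\Ob,\R^{3\times3})$ together with uniform boundedness, so that Lemma~\ref{2011271633} applies to the pair $(\mathsf D_\e,\nabla^\e\phi_\e)$ with $\nabla^\e\phi_\e=(\nabla'\phi_\e,\e^{-(p+1)}\partial_3\phi_\e)\wto(\nabla'\overline\phi,g)$ weakly in $L^2(\Ob,\R^3)$. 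This gives
$$
\liminf_{\e\to 0} I_\e(\phi_\e)\ \ge\ \frac12\int_{\Ob}\begin{pmatrix}\nabla'\overline\phi\\ g\end{pmatrix}^{\!T}\!\overline{\mathsf D}(x')\begin{pmatrix}\nabla'\overline\phi\\ g\end{pmatrix}dx\ \ge\ \frac12\int_{\Ob}(\nabla'\overline\phi)^T\overline{\mathsf B}(x')\nabla'\overline\phi\,dx\ =\ I_0(\overline\phi),
$$
the last inequality by the pointwise minimality of the Schur complement (and the last equality because the integrand is $x_3$-independent). One must also check that the limit $\overline\phi$ inherits the Dirichlet datum on $\partial_D\Omega$: since the trace operator $H^1(\Ob)\to L^2(\partial_D\Omega)$ is continuous and $\phi_\e-\phi_0\in H^1_D(\Ob)$ is bounded in $H^1$, up to subsequences $\phi_\e\wto\overline\phi$ in $H^1(\Ob)$ and the trace passes to the limit, giving $\overline\phi-\phi_0\in H^1_D(\o)$ (using $\partial_3\phi_0=0$); if instead $\overline\phi\notin H^1_D(\o)+\phi_0$ the liminf is $+\infty$ and the inequality is trivial.

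For the $\Gamma$-limsup inequality, given $\overline\phi\in H^1_D(\o)+\phi_0$ I would build a recovery sequence of the form $\phi_\e(x)=\overline\phi(x')+\e^{p+1}\psi(x',x_3)$ where $\psi$ encodes the optimal transverse corrector: pointwise in $x'$, the minimiser $t^*(x')=-\overline{\mathsf D}_{33}^{-1}\overline{\mathsf D}_{\beta 3}\partial_\beta\overline\phi$ suggests choosing $\partial_3\psi=t^*(x')$, i.e. $\psi(x',x_3)=x_3\,t^*(x')$ (adjusted near $\partial_D\omega$ by a cutoff so that $\phi_\e-\phi_0\in H^1_D(\Ob)$; the cutoff region has vanishing measure and does not affect the limit energy since $t^*\in L^2$). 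Then $\nabla^\e\phi_\e=(\nabla'\overline\phi+\e^{p+1}\nabla'\psi,\ t^*(x'))$, and $\e^{p+1}\nabla'\psi\to 0$ strongly in $L^2$, while $\mathsf D_\e\to\overline{\mathsf D}$ strongly in $L^2$ and is uniformly $L^\infty$-bounded, so the product converges and
$$
\limsup_{\e\to 0} I_\e(\phi_\e)=\frac12\int_{\Ob}\begin{pmatrix}\nabla'\overline\phi\\ t^*\end{pmatrix}^{\!T}\!\overline{\mathsf D}(x')\begin{pmatrix}\nabla'\overline\phi\\ t^*\end{pmatrix}dx=\frac12\int_{\Ob}(\nabla'\overline\phi)^T\overline{\mathsf B}(x')\nabla'\overline\phi\,dx=I_0(\overline\phi).
$$
Strong $L^2(\Ob)$ convergence $\phi_\e\to\overline\phi$ is immediate since $\|\phi_\e-\overline\phi\|_{L^2}=\e^{p+1}\|\psi\|_{L^2}\to 0$ (recall $p+1>0$). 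The main obstacle I anticipate is the bookkeeping around the lateral boundary condition — ensuring the corrector $\psi$ can be modified near $\partial_D\omega$ without spoiling either the admissibility $\phi_\e-\phi_0\in H^1_D(\Ob)$ or the energy limit — together with making the convergence $\mathsf D_\e\to\overline{\mathsf D}$ interact cleanly with the only weakly convergent gradients in the liminf step, which is precisely where near-homogeneity (Definition~\ref{2011292351}) and Lemma~\ref{2011271633} are indispensable, since weak convergence of $\mathsf D_\e$ alone would not suffice.
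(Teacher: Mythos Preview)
Your approach mirrors the paper's almost exactly: compactness via the uniform ellipticity \eqref{2012012318}, the liminf via Lemma~\ref{2011271633} followed by pointwise minimisation over the transverse component to obtain the Schur complement $\overline{\mathsf B}$, and the limsup via an explicit corrector realising the optimal transverse slope $t^*=-\overline{\mathsf D}_{33}^{-1}\overline{\mathsf D}_{\beta 3}\partial_\beta\overline\phi$.

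There is, however, one genuine gap in your recovery sequence. You set $\psi(x',x_3)=x_3\,t^*(x')$ and assert that $\e^{p+1}\nabla'\psi\to 0$ strongly in $L^2$. But $t^*$ lies only in $L^2(\o)$, since $\partial_\beta\overline\phi\in L^2(\o)$ and $\overline{\mathsf D}\in L^\infty$ --- nothing more. Hence $\nabla'\psi$ need not exist in $L^2$, and your $\phi_\e$ may fail to belong to $H^1(\Ob)$ at all, in which case $I_\e(\phi_\e)=+\infty$. A lateral cutoff near $\partial_D\omega$ cannot cure this, because the obstruction is the interior regularity of $t^*$, not its boundary behaviour. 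The paper handles precisely this point by mollifying the corrector: it takes $\phi_{\e,\eta}=\overline\phi+\e^{p+1}(x_3+1)\,(\overline c^*\ast\rho_\eta)$ with $\rho_\eta\in C^\infty_c(\o)$, so that $|\nabla'(\overline c^*\ast\rho_\eta)|\le M\eta^{-2}$, and then extracts a diagonal $\eta=\eta(\e)$ with $\e^{p+1}\eta(\e)^{-2}\to 0$. The compact support of $\rho_\eta$ in $\o$ simultaneously takes care of the Dirichlet condition on $\partial_D\Omega$, making a separate cutoff unnecessary. Alternatively one could argue by density (first treat smooth $\overline\phi$, then pass to the limit using continuity of $I_0$ in $H^1$), but either way the missing ingredient is a regularisation of $t^*$.
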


\begin{proof}
We prove the statement in three steps, first, we show compactness of minimising sequences, second, we show the lower bound inequality, third we prove the upper bound 
inequality.

\paragraph{Compactness.}
Take an admissible minimising sequence $(\phi_\e)\subset L^2(\Ob)$ for which uniform boundedness of the energy $I_{\e}(\phi_{\e})\le C$ implies, thanks to
\eqref{eqn:ellipticity},
\begin{eqnarray}\label{2011301241}
\left\| \left(\nabla'\phi_{\e},\frac{1}{\e^{p+1}}\partial_3\phi_{\e}\right)  \right\|^2_{L^2(\Ob)} \le C;\quad \frac{1}{\e^{2p+2}}\left\| \left( \partial_3\phi_{\e}\right)  \right\|^2_{L^2(\Ob)} \le C,
\end{eqnarray}
which yields, thanks to Poincaré's inequality, that
\begin{eqnarray}\label{2011301242}
  \phi_{\e}\rightharpoonup \overline{\phi} \textrm{ weakly in }
  H^1(\Ob);\quad
 \partial_3\phi_{\e} \to 0 \textrm{ strongly in } L^2(\Ob); \quad
 \frac{1}{\e^{p+1}}\partial_3\phi_{\e}\rightharpoonup c_{}
\textrm{ weakly in } 
 L^2(\Ob).
\end{eqnarray}
This identifies the limit space 
\begin{eqnarray}\label{2012281054}
H^1_D(\o):=\{ \overline{\phi}\in H^1(\o), \overline{\phi}=0 \textrm{ on } \partial_D\o\}.
\end{eqnarray}

\paragraph{Gamma-liminf inequality.}

It is enough to consider sequences making the functional finite and uniformly bounded in $\e$. We write
\begin{multline}\label{2011301253}
C\ge\liminf_{\e\to 0} I_{\e}(\phi_{\e})
\ge  {\frac{1}{2}}\int_{\Ob} \left(\nabla'\overline{\phi},c\right)^T\overline{\mathsf D}(x')\left(\nabla'\overline\phi,c\right)dx \ge
    {\frac{1}{2}}\int_{\o} \left(\nabla'\overline{\phi},\overline{c}\right)^T\overline{\mathsf D}(x')\left(\nabla'\overline\phi,\overline{c}\right)dx,
\end{multline}
where $\overline{\mathsf D}(x')$ is the average of $\mathsf D(x)$ over the height; $\overline{\phi}$
and $c$ are the weak limits introduced above. We remark that the inequality above holds due to lower semicontinuity thanks to Lemma
\eqref{2011271633}
 because $\mathsf D_{\e}$ converges strongly to $\overline{\mathsf D}$ in $L^2({\Ob})$ according to Definition  \eqref{2011292351}.
The last inequality above follows from Jensen's inequality, where the only function which possibly depends on $x_3$ is $c$. Here $\overline{c}$ is the average of $c$ over $x_3.$
Then, 
\begin{eqnarray}\label{2011300133}
    \int_{\o} \left(\nabla'\overline{\phi},\overline{c}\right)^T\overline{\mathsf D}(x')\left(\nabla'\overline\phi,\overline{c}\right)dx \ge
    \int_{\o} \left(\nabla'\overline{\phi},\overline{c}^*\right)^T\overline{\mathsf D}(x')\left(\nabla'\overline\phi,\overline{c}^*\right)dx =
    \int_{\o} \left(\nabla'\overline{\phi}\right)^T\overline{\mathsf B}(x')\left(\nabla'\overline\phi\right)dx
\end{eqnarray}
where 
\begin{eqnarray}\label{2011300138}
\overline{c}^*=-\frac{\overline{\mathsf D}_{13}\partial_1\overline{\phi} +\overline{\mathsf D}_{23} \partial_2\overline{\phi} }{ \overline{\mathsf D}_{33} } (x')
\end{eqnarray}
has been obtained by pointwise minimisation of the transverse term in the integrand of~\eqref{2011300133}.

\paragraph{Gamma-limsup.}
 Consider a general ${\phi}\in H_D^1(\o)+\phi_0$,
Take ${\phi}_{\e,\eta}=\overline{\phi}+{\e^{p+1}}\overline{c}^*(x_3+1)\ast\rho_{\eta}$ 
where $\overline{c}^*$ is defined in \eqref{2011300138}.
Here $\rho_{\eta}$
 is the standard mollifier in $C^{\infty}_c(\o)$. 
 Notice that with this choice $\e^{p+1}\overline{c}^*(x_3+1)\ast
 \rho_{\eta}\in C^{\infty}_c(\o)\cap  C^{\infty}(\Ob)$ and ${\phi}_{\e,\eta}$ satisfies prescribed boundary conditions and
 ${\phi}_{\e,\eta}  \to \overline{\phi}$
 strongly in $L^2(\Ob)$ as $\e\to 0$, for a fixed $\eta>0$.
Plugging ${\phi}_{\e,\eta}$ into  $I_{\e}(\cdot)$
we have
\begin{multline}\label{2011300116}
I_{\e}({\phi}_{\e,\eta})=
{\frac{1}{2}}\int_{\Ob} \left(\nabla'\overline{\phi},\overline{c}^*\ast\rho_{\eta}\right)^T{\mathsf D}_{\e}(x)\left( \nabla'\overline{\phi},\overline{c}^*\ast\rho_{\eta}\right)dx
+\\
   \int_{\Ob} \left(\nabla'\overline{\phi},\overline{c}^*\ast\rho_{\eta}\right)^T{\mathsf D}_{\e}(x)\left( \e^{p+1}(x_3+1)\nabla'(\overline{c}^* \ast\rho_{\eta})  ,0\right)dx\\
+
 {\frac{1}{2}}\int_{\Ob} \left( \e^{p+1}(x_3+1)\nabla'(\overline{c}^* \ast\rho_{\eta}),0\right)^T{\mathsf D}_{\e}(x)\left( \e^{p+1}(x_3+1)\nabla'(\overline{c}^* \ast\rho_{\eta}) , 0\right)dx.
\end{multline}
We now discuss the three summands appearing on the right-hand side of \eqref{2011300116}.
First, observe
\begin{eqnarray}\label{2011300258}
   \int_{\Ob} \left(\nabla'\overline{\phi},\overline{c}^*\ast\rho_{\eta}\right)^T\mathsf D_{\e}(x)\left( \nabla'\overline{\phi},\overline{c}^*\ast\rho_{\eta}\right)dx\to
   \int_{\o} \left(\nabla'\overline{\phi},\overline{c}^* \right)^T\overline{\mathsf D}(x')\left( \nabla'\overline{\phi},\overline{c}^* \right)dx'
   =\int_{\o} \left(\nabla'\overline{\phi}\right)^T\overline{\mathsf B}(x') \nabla'\overline{\phi}dx',\nonumber
\end{eqnarray}
as both $\eta,\e\to 0$ since $\overline{c}^*\ast\rho_{\eta}\to\overline{c}^*$ strongly in $L^2(\o)$, 
$\mathsf D_{\e}\to\overline{\mathsf D} (x')$ strongly in $L^2(\Ob)$ with $\mathsf D_{\e}(x)$ uniformly bounded for every $\e$.
Second, observe,
  $|\nabla'(\overline{c}^* \ast\rho_{\eta})|=| \overline{c}^* \ast\nabla'\rho_{\eta} |\leq M  \eta^{-2}$ and therefore for fixed 
$\e>0$ there exists $\eta=\eta(\e)$     such that
\begin{multline}\label{2011300302}
\left|\int_{\Ob} \left(\nabla'\overline{\phi},\overline{c}^*\ast\rho_{\eta}\right)^T\mathsf D_{\e}(x)\left( \e^{p+1}(x_3+1)\nabla'(\overline{c}^* \ast\rho_{\eta})  ,0\right)dx\right|
\le\\ M \e^{p+1}
\|\nabla'\overline{\phi},\overline{c}^*\ast\rho_{\eta}\|_{L^2(\o)} 
\|\nabla'(\overline{c}^* \ast\rho_{\eta})  ,0\|_{L^2(\o)}
\le
M \e^{p+1}
\|\overline{c}^* \ast\nabla'\rho_{\eta}  \|_{L^2(\o)}\le
O(\e).
\end{multline}
Finally, consider 
\begin{eqnarray}\label{2011300240}
 \int_{\Ob} \left( \e^{p+1}(x_3+1)\nabla'(\overline{c}^* \ast\rho_{\eta}),0\right)^T\mathsf D_{\e}(x)\left( \e^{p+1}(x_3+1)\nabla'(\overline{c}^* \ast\rho_{\eta}) , 0\right)dx
\le
\e^{2p+2} M\| \nabla'(\overline{c}^* \ast\rho_{\eta})\|_{L^2(\o,\R^2) }^2\le O(\e)
\nonumber
\end{eqnarray}

Thus one can take the sequence
$
{\phi}_{\e,\eta(\e)}=\overline{\phi}+{\e^{p+1}}\overline{c}^*(x_3+1)\ast\rho_{\eta(\e)}
$ 
to read the result.
\end{proof}

\begin{Remark}\label{2012031148} 
Because of the ellipticity of the three-dimensional matrix $\mathsf D$, the effective matrix $\mathsf{\overline{B}}$ defined by Equation~\eqref{2011271852} is, in particular, symmetric and positive definite.
\end{Remark}

\subsection{Continuity of electrostatic work}\label{2012291150}
\begin{lemma}\label{2103101500}
Let $\phi_0$ as in Remark \ref{2008202327},
$\overline{Q}$ constant in $\Ob$
 and take a  sequence
$\{Q_k\}\subset H^1(\Ob,\mathcal{Q}_X)$ of uniformly bounded order tensors.
Define, for $\e>0$ and $k\in \N$
\begin{eqnarray}\label{2011301735}
I_{k,\e}(\phi):=\left\{ 
\begin{aligned}
&\displaystyle{{\frac{1}{2}}\int_{\Ob}} \left(\nabla^\e \phi\right)^T\mathsf D(Q_k)
 \nabla^\e \phi dx & & \text{ in } H^1_{D}(\Ob)+\phi_0 \\
&+\infty & & \textrm{ otherwise in }  L^2(\Ob).
\end{aligned} \right.
\end{eqnarray}
Let
$Q_k\to  \overline Q$ strongly in $L^2(\Ob,\R^{3\times 3})$ as $k\to\infty.$
Then, the $\Gamma$-limit of $I_{k,\e}$ in the strong $L^2(\Ob)$ topology as $\e\to 0$ and $k\to\infty$ is
\begin{eqnarray}\label{2012281808}
I_{\infty,0}(\overline{\phi}):=\left\{ 
\begin{array}{ll}
\displaystyle{\frac{1}{2}}\int_{\Ob} (\nabla'\overline{\phi})^T \,\overline{\mathsf B}(\overline{Q}) 
\nabla'\overline{\phi}  dx'  &
\textrm{ in }  H^1_{D}(\o)+\phi_0\\
+\infty & \textrm{ otherwise in }   L^2(\Ob),
\end{array} \right.
\end{eqnarray}
{where}
\begin{equation}\label{2011302359}
{	\overline{\mathsf B}(\overline{Q}) = \left(
	\begin{matrix}
\displaystyle{\overline{\mathsf D}_{11}(\overline{Q})-\frac{\overline{\mathsf D}_{13}^2(\overline{Q})}{\overline{\mathsf D}_{33}}}	 & \displaystyle{\overline{\mathsf D}_{12}(\overline{Q})-\frac{\overline{\mathsf D}_{13}(\overline{Q}) \overline{\mathsf D}_{23}(\overline{Q})}{\overline{\mathsf D}_{33}}}	 \\
\displaystyle{\overline{\mathsf D}_{12}(\overline{Q})-\frac{\overline{\mathsf D}_{13}(\overline{Q})\overline{\mathsf D}_{23} (\overline{Q})}{ \overline{\mathsf D}_{33}(\overline{Q})}}	 & \displaystyle{\overline{\mathsf D}_{22}(\overline{Q})-\frac{\overline{\mathsf D}_{23}^2(\overline{Q})}{\overline{\mathsf D}_{33}(\overline{Q})}}
	\end{matrix}\right).}
\end{equation}
Also, denoting by $\div_{\e}=( \frac{\partial}{\partial x_1}+\frac{\partial}{\partial x_2}+\frac{1}{\e^{p+1}}\frac{\partial}{\partial x_3})$ the rescaled divergence and by $\phi_{Q_k,\e}$ the solution to the 3D Gauss equation
\begin{equation}
\label{2008201137}
	\phi\in H^1_{D}(\Ob)+\phi_0: \qquad -\div_{\e}\left( \mathsf D(Q_k)\nabla^\e \phi\right)=0 \textrm{ in } H^{-1}(\Ob),
\end{equation}
we have 
\begin{eqnarray}\label{2011301900}
\phi_{Q_k,\e}\to \overline{\phi}_{\overline{Q}} \textrm{ strongly in }H^1(\Ob),
 \end{eqnarray}
with $\overline{\phi}_{\overline{Q}}\in H^1(\Ob)$ such that $\partial_3 \overline{\phi}_{\overline{Q}}=0$
in $(-1,0)$
(equivalently, $\overline{\phi}_{\overline{Q}}\in H^1(\omega)$ constantly extended along $x_3$)
 and  
\begin{eqnarray}\label{2011301901}
\frac{1}{\e^{p+1}}\partial_3\phi_{Q_k,\e}\to-\frac{\overline{\mathsf D}_{13}(\overline{Q})\partial_1\overline{\phi}_{\overline{Q}}+\overline{\mathsf D}_{23}(\overline{Q})\partial_2\overline{\phi}_{\overline{Q}}}{\overline{\mathsf D}_{33}(\overline{Q})}
\textrm{ strongly in }L^2(\Ob),
\label{eqn:limitdielectric}
 \end{eqnarray}
 where
and $\overline{\mathsf D}_{ij}(\overline{Q})$ are components of the matrix $\overline{\mathsf D}(\overline{Q})$ and
  $\overline{\phi}_{\overline{Q}}$
is a solution to the 2D Gauss Law
\begin{equation}\label{2011301902}
\overline{\phi}\in H^1_{D}(\omega)+\phi_0: \qquad
-\div'( \overline{\mathsf B}(\overline{Q})\nabla'\overline{\phi})=0 \quad  \textrm{ in } H^{-1}(\omega), 
\end{equation}
with $\overline {\mathsf B}$ according to~\eqref{2011271852}.
Additionally,
\begin{multline}\label{2011301740}
\min_{\phi\in H^1_D(\Ob)+\phi_0}
\int_{\Ob}
\left( \nabla^\e \phi \right)^T \mathsf D(Q_k)\left( \nabla^\e \phi \right)dx
=
\int_{\Ob}
\left( \nabla^\e \phi_{Q_k, \e} \right)^T \mathsf D(Q_k)\nabla^\e \phi_{Q_k, \e} dx
 \\
 \to
 \int_{\omega}
\nabla\overline{\phi}_{\overline{Q}}^T \mathsf {\overline B}(\overline{Q})\nabla\overline{\phi}_{\overline{Q}} dx'
=
\min_{\phi\in H^1_D(\o)+\phi_0}
 \int_{\omega}
\nabla'\overline{\phi}^T \mathsf {\overline B}(\overline{Q})\nabla'\overline{\phi} dx',
\end{multline}
as $k\to\infty$ and $\e\to 0$.
\end{lemma}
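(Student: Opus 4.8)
The plan is to recognise the family $\{\mathsf D(Q_k)\}$ as a nearly homogeneous dielectric tensor in the sense of Definition~\ref{2011292351} and thereby reduce the statement to Lemma~\ref{2009142323}, supplementing it with the fundamental theorem of $\Gamma$-convergence for the assertions on minimisers and on minimal values. First I would observe that, by \eqref{2009150120}, the map $Q\mapsto\mathsf D(Q)$ is affine, so that $Q_k\to\overline Q$ strongly in $L^2(\Ob,\R^{3\times3})$ yields $\mathsf D(Q_k)\to\mathsf D(\overline Q)$ strongly in $L^2(\Ob,\R^{3\times3})$; since $\overline Q$ is constant in $\Ob$, so is $\mathsf D(\overline Q)$, hence the height average $\overline{\mathsf D}$ equals $\mathsf D(\overline Q)$. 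Each $\mathsf D(Q_k)$ is symmetric, satisfies \eqref{eqn:ellipticity} uniformly in $k$, and is uniformly $L^\infty(\Ob,\R^{3\times3})$-bounded because $\mathcal Q_X\subset\mathcal Q_B$ is bounded; therefore, along any joint sequence $\e_n\to0$, $k_n\to\infty$, the matrices $\mathsf D_{\e_n}:=\mathsf D(Q_{k_n})$ form a nearly homogeneous dielectric tensor with average $\mathsf D(\overline Q)$. Applying Lemma~\ref{2009142323}, the functionals $I_{k_n,\e_n}$ $\Gamma$-converge in the strong $L^2(\Ob)$ topology to the functional built on the matrix \eqref{2011271852} evaluated at $\overline{\mathsf D}=\mathsf D(\overline Q)$; since this coincides with $\overline{\mathsf B}(\overline Q)$ of \eqref{2011302359}, the $\Gamma$-limit is exactly $I_{\infty,0}$.

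Next I would address the statements on $\phi_{Q_k,\e}$, $\overline\phi_{\overline Q}$ and the convergence of minima. For fixed $\e$ and $k$ the functional $I_{k,\e}$ admits the unique minimiser $\phi_{Q_k,\e}$, namely the solution of the $3$D Gauss equation \eqref{2008201137}, by Proposition~\ref{prop:gauss}. The limit $I_{\infty,0}$ is strictly convex and coercive on $H^1_D(\o)+\phi_0$: its quadratic form is governed by the symmetric positive definite matrix $\overline{\mathsf B}(\overline Q)$ (Remark~\ref{2012031148}) and coercivity follows from Poincar\'e's inequality since $\mathcal H^1(\partial_D\o)>0$; hence it has a unique minimiser $\overline\phi_{\overline Q}$. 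The compactness step in the proof of Lemma~\ref{2009142323} already yields equicoercivity of $\{I_{k,\e}\}$, so the fundamental theorem of $\Gamma$-convergence gives $\min I_{k,\e}\to\min I_{\infty,0}$, which is \eqref{2011301740}, and---$\overline\phi_{\overline Q}$ being the \emph{unique} minimiser---the whole sequence $\phi_{Q_k,\e}\to\overline\phi_{\overline Q}$ strongly in $L^2(\Ob)$. That $\overline\phi_{\overline Q}$ is $x_3$-independent and solves the $2$D Gauss law \eqref{2011301902} follows because it satisfies the Euler--Lagrange equation of $I_{\infty,0}$, namely $-\div'(\overline{\mathsf B}(\overline Q)\nabla'\overline\phi)=0$ with positive definite coefficient, a classical fact for elliptic quadratic functionals.

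Finally I would upgrade the convergence to strong $H^1(\Ob)$ and extract \eqref{eqn:limitdielectric}. Writing $v_{k,\e}:=\nabla^\e\phi_{Q_k,\e}=\bigl(\nabla'\phi_{Q_k,\e},\tfrac{1}{\e^{p+1}}\partial_3\phi_{Q_k,\e}\bigr)$, the compactness step of Lemma~\ref{2009142323} gives $v_{k,\e}\rightharpoonup v_\infty=(\nabla'\overline\phi_{\overline Q},\overline c^{\,\ast})$ weakly in $L^2(\Ob,\R^3)$, with $\overline c^{\,\ast}$ the pointwise minimiser of the transverse slot, i.e.\ \eqref{2011300138} with $\overline{\mathsf D}=\mathsf D(\overline Q)$. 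From the convergence of minima one has $\int_{\Ob}v_{k,\e}^T\mathsf D(Q_k)v_{k,\e}\,dx\to\int_{\Ob}v_\infty^T\mathsf D(\overline Q)v_\infty\,dx$; expanding $\int_{\Ob}(v_{k,\e}-v_\infty)^T\mathsf D(Q_k)(v_{k,\e}-v_\infty)\,dx$, using weak convergence of $v_{k,\e}$ together with the uniform $L^\infty$ bound and the strong $L^2$ convergence $\mathsf D(Q_k)v_\infty\to\mathsf D(\overline Q)v_\infty$ (dominated convergence), every term combines to $0$, and the uniform ellipticity \eqref{eqn:ellipticity} forces $v_{k,\e}\to v_\infty$ strongly in $L^2(\Ob,\R^3)$; this is precisely $\nabla'\phi_{Q_k,\e}\to\nabla'\overline\phi_{\overline Q}$ and $\tfrac{1}{\e^{p+1}}\partial_3\phi_{Q_k,\e}\to\overline c^{\,\ast}$ in $L^2(\Ob)$, i.e.\ \eqref{eqn:limitdielectric}. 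Since $\partial_3\phi_{Q_k,\e}=\e^{p+1}O(1)\to0$ and $\phi_{Q_k,\e}\to\overline\phi_{\overline Q}$ in $L^2(\Ob)$, strong $H^1(\Ob)$ convergence \eqref{2011301900} follows. I expect the only genuinely non-formal point to be the identification of the weak limit of the rescaled transverse derivative with $\overline c^{\,\ast}$ and the passage from weak to strong convergence; both are carried out exactly as in the proof of Lemma~\ref{2009142323} (the vector $\overline c^{\,\ast}$ emerging from pointwise minimisation of the quadratic form over the transverse component) combined with the elementary weak-plus-norm criterion, so that no new difficulty arises beyond the affine structure of $\mathsf D(\cdot)$ already exploited in the reduction.
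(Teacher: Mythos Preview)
Your proposal is essentially correct and follows the paper's strategy: reduce to Lemma~\ref{2009142323} via the affine map $Q\mapsto\mathsf D(Q)$ and the strong convergence $Q_k\to\overline Q$, then invoke the fundamental theorem of $\Gamma$-convergence for minima and minimisers, and finally upgrade weak to strong convergence of scaled gradients by the weak-plus-norm criterion.

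One point deserves clarification. You write that the identification of the weak limit of $\tfrac{1}{\e^{p+1}}\partial_3\phi_{Q_k,\e}$ with $\overline c^{\,\ast}$ is ``carried out exactly as in the proof of Lemma~\ref{2009142323}''. It is not: Lemma~\ref{2009142323} is a $\Gamma$-convergence statement for \emph{general} sequences and never identifies the third slot of the weak limit with the pointwise optimiser~\eqref{2011300138}; that identification is specific to \emph{minimisers}. The paper carries it out differently from what you sketch: it passes to the limit in the Euler--Lagrange equations (testing with $\theta\in H^1(\o)+\phi_0$) and compares the resulting identity with the weak form of the 2D Gauss law~\eqref{2011301902}, which forces $c^\ast=\tilde c$. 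Your implicit alternative---tracking equality through the chain of inequalities in the $\Gamma$-liminf proof (Jensen in $x_3$, then pointwise optimisation) and using strict convexity of the quadratic form governed by $\overline{\mathsf D}(\overline Q)$---is also valid and arguably more direct, but it is an argument you still have to make; it does not come for free from Lemma~\ref{2009142323}.
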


\begin{proof}
For fixed $\e$, 
it is not restrictive to assume that
there exists $k=k(\e)$ such that $\|\mathsf D(Q_k)-\mathsf D(Q)\|_{L^2(\Ob,\mathbb{R}^{3\times 3})}\le \e$, that is, $\mathsf  D(Q_k)$ is a nearly transversely homogeneous dielectric matrix. Therefore,
Lemma~\ref{2009142323}
  applies verbatim 
with $ k=k(\e)$.
Consequently, \eqref{2011301740} follows directly from the convergence of the minimum and minimiser of  \eqref{2011301735}
to the minimum and minimiser  of \eqref{2012281808}.
We are left with showing  \eqref{2011301900} and \eqref{2011301901}.
First, from \eqref{2011301740} one has that
\begin{eqnarray}\label{}
\int_{\Ob}
\left( \nabla'\phi_{Q_{k(\e)},\e},\frac{1}{\e^{p+1}}\partial_3\phi_{Q_{k(\e)},\e}\right)^T \mathsf D(Q_{k(\e)})\left( \nabla'\phi_{Q_{k(\e)},\e},\frac{1}{\e^{p+1}}\partial_3\phi_{Q_{k(\e)},\e}\right)dx
\le C\end{eqnarray}
and by equi-coercivity     there exists $\overline{\phi}^*\in H^1(\Ob)$ such that, as $\e\to 0$,
\begin{multline}\label{2011302155}
 \phi_{Q_{k(\e)},\e}
 \rightharpoonup \overline{\phi}^*
\textrm{ weakly in } H^1(\o),\quad 
 \partial_3\phi_{Q_{k(\e)},\e}
\to 0
\textrm{ strongly in } L^2(\Ob),\\ 
{ \frac{1}{\e^{p+1}}\partial_3\phi_{Q_{k(\e)},\e}
\rightharpoonup c^*}
\textrm{ weakly in } L^2(\Ob)
\end{multline}
up to a subsequence here not relabelled.
Thanks to the Fundamental Theorem of $\Gamma$-convergence, such a sub-sequence converges to the minimiser $\overline{\phi}_{\overline{Q}}$ of the right hand side of 
\eqref{2011301740} in the sense specified by 
the first two terms in \eqref{2011302155}. This uniquely determines  $\overline{\phi}^*\equiv  \overline{\phi}_{\overline{Q}}$.
We notice that, since the solution to both the $\e$-dependent problem and the $\Gamma$-limits are unique due to strict convexity, the convergence is indeed recovered for the entire $k$-sequence and it is not necessary to pass to subsequences.

In order to identify $c^*$, we derive the associated Euler equations and pass to the limit, exploiting convergences established so far. 
Consider a generic test function $\theta\in H^1(\o)+\phi_0$.
We have
\begin{eqnarray}\label{}
\int_{\Ob}
 ( \nabla'\phi_{Q_{k(\e)},\e},\frac{1}{\e^{p+1}}\partial_3\phi_{Q_{k(\e)},\e})^T \mathsf D(Q_{k(\e)}) ( \nabla'\theta,0 )dx
=0, \quad \forall \theta\in H^1(\o)+\phi_0,
\end{eqnarray}
and in the limit $\e\to 0$
\begin{eqnarray}\label{2011302249}
\int_{\Ob}
 ( \nabla'\overline{\phi}_{\overline{Q}}, c^*)^T \mathsf {\overline D}(\overline{Q}) ( \nabla'\theta,0 )dx
=0, \quad \forall \theta\in H^1(\o)+\phi_0.
\end{eqnarray}
(Notice we have replaced $\phi^*$ with $\overline{\phi}_{\overline{Q}}$ above  as they are   identified.)
Additionally, $\overline{\phi}_{\overline{Q}}$ is such that
\begin{eqnarray}\label{2011302250}
\int_{\o}
 ( \nabla'\overline{\phi}_{\overline{Q}})^T  \mathsf {\overline B}(\overline{Q})  \nabla'\theta dx
=0, \quad\forall \theta\in H^1(\o)+\phi_0
\end{eqnarray}
by minimality,
and we can map the integral to $\Ob$ by a constant extension of its argument along $x_3$.
Observe that, by relabeling
with $\tilde{c}$
the  right hand side of  \eqref{2011301901},
 we have the identity
\begin{eqnarray}\label{2011302251}
\int_{\Ob}
 ( \nabla'\overline{\phi}_{\overline{Q}}, \tilde{c} )^T  \mathsf{\overline D}(\overline{Q}) ( \nabla'\theta,0)dx
\equiv 
\int_{\Ob}
 ( \nabla'\overline{\phi}_{\overline{Q}})^T  \mathsf {\overline B}(\overline{Q}) ( \nabla'\theta)dx
=0 
\end{eqnarray}
Therefore
 \eqref{2011302249} and \eqref{2011302251}
coincide,
 and the last 
property
in \eqref{2011302155}  follows with $\tilde{c}\equiv c^*$.
Finally, to pass from the weak convergence to the strong convergence we consider again \eqref{2011301740}. 
Upon replacing $\mathsf D(Q_{k(\e)})$ with $\overline{\mathsf D}(\overline{Q})$
in the second integral in \eqref{2011301740}
we obtain, as $\e\to 0$,
\begin{eqnarray}\label{2011302143}
{\int_{\Ob}
\left(  \nabla^{\e} \phi_{Q_{k(\e)}, \e} \right)^T \overline{\mathsf D}(\overline{Q})\nabla^{\e} \phi_{Q_{k(\e)}, \e} \,dx
 \to
 \int_{\Ob}
(\nabla\overline{\phi}_{\overline{Q}},c^*)^T \overline{\mathsf D}(\overline{Q})(\nabla\overline{\phi}_{\overline{Q}},c^*) dx}
\end{eqnarray}
and, in turn,
\begin{eqnarray}\label{2011302145}
{\int_{\Ob}
\left( \nabla'\phi_{Q_{k(\e)},\e}-\nabla\overline{\phi}_{\overline{Q}},\frac{1}{\e^{p+1}}\partial_3\phi_{Q_{k(\e)},\e}-c^*\right)^T \mathsf {\overline  D}(\overline{Q})\left(\nabla'\phi_{Q_{k(\e)},\e}-\nabla\overline{\phi}_{\overline{Q}},\frac{1}{\e^{p+1}}\partial_3\phi_{Q_{k(\e)},\e}-c^*\right)dx\to 0.}\nonumber
\end{eqnarray}
Using the estimate for elliptic dielectric matrices \eqref{eqn:ellipticity} we have
\begin{eqnarray}\label{2011302256}
{
 \left\| \nabla'\phi_{Q_{k(\e)},\e}-\nabla\overline{\phi}_{\overline{Q}},\frac{1}{\e^{p+1}}\partial_3\phi_{Q_{k(\e)},\e}-c^*\right\|^2_{L^2(\Ob,\mathbb{R}^3)}\to 0,\quad \textrm{as }\e\to 0},
\end{eqnarray}
and  
\eqref{2011301900} and
\eqref{2011301901} are proven.

\end{proof}
\begin{Remark}
{The strong $L^2(\Ob,\R^{3\times 3})$-convergence of order tensors
is key to ensure the strong $H^1$-convergence of the electrostatic potential
 solving Gauss equation.
This is a consequence of the G-closure of elliptic operators under strong convergence of its coefficients, cf.~\cite{dal-maso1993an-introduction}.
An outstanding open problem is the characterization of the G-closure for elliptic operators of the form $-\div(\mathsf D(Q_k)\nabla\cdot)$  under the weak 
$L^2(\Ob,\R^{3\times 3})$-convergence of order tensors.}
\end{Remark}

\begin{Remark}[{Opto-electric effects in bilayer structures}]
{
In the wake of relaxation induced by the dimension reduction over $x_3$, the  limit system is described by an effective matrix of relaxed dielectric parameters $ \overline{ \mathsf B}$, cf. \eqref{2011271852}. 
Note that, by virtue of \eqref{eqn:limitdielectric}, the third component of the dielectric field is always zero. 
This is the regime of planar dielectric fields (by analogy to the elastic case).
As in \eqref{2011271852}, we decompose
$ \overline{ \mathsf B}(\overline{Q})=  \mathsf{\overline  D}'(\overline{Q}) + \overline{\mathsf B}_{\text{sh}}(\overline{Q})$
where $\overline{\mathsf D'}$ is the upper-left $2\times 2$ submatrix of $\overline{\mathsf B}(\overline{Q})$, and $\mathsf{\overline  B}_\text{sh}$ is a matrix constructed only with shear terms, namely,
}
\begin{equation}\label{2012011240}
{
	\mathsf {\overline D}' = \left(
	\begin{matrix}
\mathsf {\overline D_{11}}  & \mathsf {\overline D_{12}} 	 \\
\mathsf {\overline D_{12}}  & \mathsf {\overline D_{22}} 
	\end{matrix}\right)(\overline Q),\qquad
		\mathsf {\overline  B}_\text{sh} = -\frac{1}{ \mathsf {\overline D_{33}}}\left(
	\begin{matrix}
\mathsf {\overline D^2_{13}}  & \mathsf {\overline D_{13}}\mathsf {\overline D_{23}} 	 \\
\mathsf {\overline D_{13}}\mathsf {\overline D_{23}}  & \mathsf {\overline D^2_{23}} 
	\end{matrix}\right)(\overline Q).}
\end{equation}
{
The former of the matrices is the dielectric tensor that describes purely planar electric fields $\phi=\phi(x')$ albeit in 3D structures which cannot relax through dimension reduction.

The matrix $\overline{\mathsf B}$ coincides with $\overline{\mathsf D}'$ if and only if  $\mathsf {\overline D_{13}}=\mathsf {\overline D_{23}}=0$.
This circumstance occurs when the optical order states induced by the liquid crystal are either planar in the $(x_1, x_2)$-plane or antiplanar, parallel to the $x_3$ direction. 
In this particular scenario, the conditions of plane dielectric field (i.e. electric stress) and plane electric field (i.e. electric strain) collapse.
All other states involving sheared out-of-plane dielectric displacements induce relaxation of the dielectric matrix.
}

\end{Remark}

\subsection{Convergence of mechanical energy and electrostatic work}\label{2103021500}

Finally, we are in a position to discuss the global $\Gamma$-convergence of the total energy of the system composed of elastic bending energy of the tensor $Q$, the bulk mechanical energy in the nematic layer $J_b^\e(v,Q)$, 
the mechanical bulk energy in film layer $J_f^\e(v)$ and the electrostatic work
stemming from an external source $J^\e_{ele}(Q,\phi)$. 
The full asymptotic result follows readily by combining the $\Gamma$-convergence results of
the elastic energy for the bilayer structure
and by noticing that the electrostatic work is a continuous perturbation of the total energy, in the sense specified by Lemma~\ref{2103101500}.
 
{
Unlike in the relaxation Section~\ref{sec:relaxation}, 
the order tensor $Q$ is treated as an independent variable. This allows us to discuss parametric problems which are relevant for applications (cf. Paragraph~\ref{2012291000}).
}
For  the sake of conciseness
we present in detail the results for the fully coupled scenario, that of thin nematics ($p=0$), and discuss the thick nematic ($-1<p<0$) case, 
 with  simple modifications, at the end of the section.

  Below and in the remainder of this section,
we introduce parametrised sequences $\delta_{\e_j}\equiv\delta_j\to\infty$ and $\e_j\to 0$
(with $\delta_j^2\e_j^{p+2}\to\infty$)
indexed by $\mathbb{N}\ni j\to\infty$, adopting the short-hand notation
$u_j$ instead of $u_{\delta_j,\e_j}$ and $Q_j$ instead of $Q_{\delta_j,\e_j}$.

Consider $J^p_{\e}$   as in \eqref{2009051225}. 
To tackle the asymptotics of the mechanical and electrostatic problem, we compute the limit of 
\begin{eqnarray}\label{2009142331}
\Fpe(u,Q)=
\left\{
\begin{aligned}
&\displaystyle{\max_{\phi\in H_D^1(\Omega)+\phi_0} J^p_{\e}(u,Q,\phi)} & &\textrm{in } 
\mathcal{V} \times H^1(\Ob,\mathcal{Q}_{X})\\
&+\infty   & &\textrm{otherwise in }
L^{2}(\Omega,\R^3) \times L^2(\O,\R^{3\times 3}),
\end{aligned} \right.
\end{eqnarray}
where $X$ stands for either $Fr, U$ or $B$.
{In view of Proposition~\ref{prop:gauss}, the argument of the max is the solution to its 3D Gauss equation, for $\e>0$.
Crucially, the resulting functional~\eqref{2009142331} is coercive in $(u,Q)$, as indicated in Proposition~\ref{prop:2009141418}.
We have the following results.}

\begin{theorem}\label{thm:actuationcoupled}
Let  $\Fpe$ as in \eqref{2009142331}
with $p=0$.
We have
\begin{eqnarray}\label{2008262033}
\Gamma\hbox{-}\lim_{\e\to 0 }
\mathcal{F}_{\e}^{0}(\ukl,\overline Q)=
\mathcal{F}^{0}(\ukl,\overline{Q})
\end{eqnarray}
in the strong-$L^2(\Omega_f,\R^3)\times$strong-$H^1(\Ob,\R^{3\times 3})$  topology, where 
\begin{eqnarray}\label{}
\Fz(\ukl,\overline{Q})=
\left\{
\begin{array}{ccc}
\displaystyle{\max_{\overline{\phi}\in H_D^1(\o)+\phi_0}J^0(\ukl,\overline{Q},\overline{\phi})} &\textrm{ on } 
KL\times  \mathcal{Q}_X\\
+\infty  &\textrm{otherwise in }
L^2(\omega,\R^3)\times H^1(\omega,\R^{3\times 3}),
\end{array} \right.
\end{eqnarray}
where $\ukl=(\zeta'(x')-x_3\nabla'\zeta_3(x'),\zeta_3(x'))$, $\overline{Q}$ is a constant tensor
 and  
\begin{multline}\label{eqn:endenslimitactuationpzero}
J^0(\ukl,\overline{Q},\overline{\phi}) =	\displaystyle{ \frac{1}{2}\int_\o\left[ 
	 |e(\zeta')|^2 - e(\zeta')\nabla' \zeta_3 + \frac{1}{3}|\nabla'\nabla' \zeta_3|^2 + 
	 \coefopt \left((\tr (\zeta')^2- \tr (\zeta') \Delta' \zeta_3 + \frac{1}{3}(\Delta' \zeta_3)^2 \right)\right]dx' } \\
+ \displaystyle{	\frac{1}{2}\int_{\o}\left[ |\overline Q'|^2 + 2|\frac{1}{2}\zeta'- (\overline Q e_3)'|^2 + (\zeta_3-\overline Q_{33})^2+ \lamm\zeta_3^2 
\right]dx' }
-{\frac{1}{2}}\int_{\o}\nabla'\overline{\phi}^T \overline{\mathsf B}(\overline{Q})\nabla'\overline{\phi} dx'.
\end{multline}
\end{theorem}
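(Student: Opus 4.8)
The plan is to decouple the mechanical and electrostatic contributions and to treat the electrostatic work as a continuous perturbation, which is licit because, by Proposition~\ref{prop:gauss} and Proposition~\ref{prop:2009141418}, the functional $\Fpe(u,Q)=\max_{\phi}J^p_\e(u,Q,\phi)=J^p_\e(u,Q,\phi_Q)$ is coercive in $(u,Q)$ in the weak-$H^1(\O,\R^3)\times$weak-$H^1(\Ob,\R^{3\times 3})$ topology, and the electrostatic term enters with a fixed sign so that the max is realized by the Gauss solution $\phi_Q$. First I would split $\Fpe(u,Q)=\Jepen(u,Q,0)+\big(\text{electrostatic part}\big)$ (in the sense $\Fpe(u,Q)=J_f^\e(u)+J_b^\e(u,Q)-J^\e_{ele}(Q,\phi_Q)$). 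For the purely elastic pair of terms $J_f^\e(u)+J_b^\e(u,Q)$, with $Q$ now an \emph{independent} bounded variable rather than minimized out, I would revisit the estimates of Section~\ref{par:estimates}: the compactness of $u_j$ in $KL$ follows exactly as in Proposition~\ref{2009141558} and Paragraph~\ref{2009141551} (nothing there used the $\inf$ over $Q$ beyond boundedness of $Q$), and the strong $H^1(\Ob,\R^{3\times 3})$-convergence $Q_j\to\overline Q$ is \emph{assumed in the convergence notion} of the theorem, hence available; moreover the blow-up $\delta_j^2\e_j^{p+2}\to\infty$ together with finite energy forces $\nabla' Q_j\to 0$ and $\partial_3 Q_j\to 0$ in $L^2$, so the limit $\overline Q$ is constant (spatially homogeneous), as claimed. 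Passing to the liminf in the elastic terms reproduces, term by term, the computation already carried out in the proof of Proposition~\ref{2012031723} with $p=0$, except that now $\overline Q$ is fixed rather than optimized: the lower-semicontinuity of the convex integrands plus the strain characterization of Proposition~\ref{2009141558} (items b, c, d, g) deliver, with $p=0$ so that $\e^{-p}\int_{-1}^0\partial_3 u^\e_\alpha\rightharpoonup\zeta_\alpha$, exactly the bending+membrane film energy and the nematic coupling terms $|\overline Q'|^2+2|\tfrac12\zeta'-(\overline Q e_3)'|^2+(\zeta_3-\overline Q_{33})^2+\lamm\zeta_3^2$ appearing in~\eqref{eqn:endenslimitactuationpzero}.

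For the electrostatic term I would invoke Lemma~\ref{2103101500}: since $Q_j\to\overline Q$ strongly in $L^2(\Ob,\R^{3\times 3})$ and $\overline Q$ is constant, the lemma gives $\phi_{Q_j,\e_j}\to\overline\phi_{\overline Q}$ strongly in $H^1(\Ob)$ together with $J^\e_{ele}(Q_j,\phi_{Q_j,\e_j})\to\tfrac12\int_\o\nabla'\overline\phi_{\overline Q}^T\,\overline{\mathsf B}(\overline Q)\nabla'\overline\phi_{\overline Q}\,dx'$; and more precisely the $\Gamma$-convergence $I_{k,\e}\Gamma$-$\to I_{\infty,0}$ in Lemma~\ref{2103101500} identifies $\max_{\phi\in H^1_D(\Ob)+\phi_0}(-J^\e_{ele})\to\max_{\overline\phi\in H^1_D(\o)+\phi_0}\big(-\tfrac12\int_\o\nabla'\overline\phi^T\overline{\mathsf B}(\overline Q)\nabla'\overline\phi\big)$. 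Because $-J^\e_{ele}$ is concave in $\phi$ and the elastic part does not involve $\phi$, the order of $\max_\phi$ and $\liminf$ (resp.\ $\limsup$) can be exchanged using the uniform coercivity estimate~\eqref{2011260122} for the inner max; this is the technical heart of handling the saddle structure, and is precisely what Lemma~\ref{2103101500} was proved for. Summing the elastic liminf bound and the (continuous, hence convergent) electrostatic perturbation yields the $\Gamma$-liminf inequality $\Gamma$-$\liminf\Fpe(\ukl,\overline Q)\ge\Fz(\ukl,\overline Q)$.

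For the $\Gamma$-limsup I would build the recovery sequence in two independent pieces and glue them. For the mechanical part, given $\ukl=(\zeta'-x_3\nabla'\zeta_3,\zeta_3)\in KL$ and the constant $\overline Q\in\mathcal Q_X$, take the elastic recovery sequence $v_\e$ from Proposition~\ref{prop:ub-allp} (with $p=0$, $X=Fr$, extending to $U,B$ by Remark~\ref{rem:biaxialcase}), but now \emph{with} the prescribed $\overline Q$ rather than the $\mathcal Q_B$-projection — since $\overline Q$ is already admissible and constant, the grain decomposition of that proof collapses to a single grain and the construction is simpler; one still corrects the transverse strain components ($e_{33}$ via the Poisson relation, shears to order $\e^{\pm p}$) so that $J_f^\e(v_\e)+J_b^\e(v_\e,\overline Q)$ converges to the elastic part of $J^0(\ukl,\overline Q,\cdot)$, and the curvature term vanishes identically because $\overline Q$ is constant. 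For the electrostatic part, take $\phi_{\e,\eta(\e)}$ as constructed in the $\Gamma$-limsup step of Lemma~\ref{2009142323}/Lemma~\ref{2103101500}, which recovers the max of $-J^\e_{ele}$ at the optimal limit potential $\overline\phi_{\overline Q}$. Since $\Fpe(v_\e,\overline Q)=J_f^\e(v_\e)+J_b^\e(v_\e,\overline Q)-J^\e_{ele}(\overline Q,\phi_{v_\e,\overline Q})$ and $\phi_{v_\e,\overline Q}=\phi_{\overline Q,\e}$ does not depend on $v_\e$, the two constructions do not interfere, and $\limsup_\e\Fpe(v_\e,\overline Q)\le\Fz(\ukl,\overline Q)$. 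Matching the two bounds gives~\eqref{2008262033}. The main obstacle I anticipate is the careful bookkeeping of the inner $\max_\phi$ under $\Gamma$-convergence — verifying that $\Fpe=\max_\phi J^p_\e(\cdot,\cdot,\phi)$ $\Gamma$-converges to $\Fz=\max_{\overline\phi}J^0(\cdot,\cdot,\overline\phi)$ and not merely that the two pieces converge separately; this is controlled precisely by the strong $H^1$-convergence of the Gauss-equation solutions in Lemma~\ref{2103101500}, which rests in turn on the strong $L^2$-convergence of $Q_j$ (the G-closure of elliptic operators under strong convergence of coefficients).
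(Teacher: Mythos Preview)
Your proposal is correct and follows essentially the same approach as the paper: decouple the mechanical part from the electrostatic work, establish compactness and the liminf for the elastic terms via Proposition~\ref{2009141558} and Proposition~\ref{2012031723} (with $\overline Q$ fixed rather than optimised), take the trivial constant recovery sequence $Q_j\equiv\overline Q$ for the limsup so that the grain/boundary-layer machinery collapses, and handle the electrostatic contribution through Lemma~\ref{2103101500} as a continuous perturbation. The paper organises this slightly more cleanly by first proving the full $\Gamma$-convergence for $\phi_0\equiv 0$ (where the electrostatic term vanishes identically) and then invoking the standard stability of $\Gamma$-limits under continuous perturbations \cite{dal-maso1993an-introduction}, rather than constructing an explicit recovery sequence for $\phi$; but the content is the same.
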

\begin{proof}
First, we show $\Gamma$-convergence
{
of the mechanical energy alone, noticing that, by fixing $\phi_0\equiv 0$ 
}
$ \max_{\phi\in H_D^1(\Omega)} J^p_{\e}(u,Q,\phi)
=J^p_{\e}(u,Q,0)$.
\paragraph{Compactness.}
Taking sequences $(u_j,Q_j)$ such that {$\Fpej(u_j,Q_j)=J^p_{\e_j}(u_j,Q_j,0)\le C$}, 
where
 $C$ does  not depend on $\e_j$ nor $\delta_j$, shows that the limit set of displacements is given by $KL$
 (see Proposition \ref{2009141558}). To determine the limit of the order tensors, observe that 
$\delta_j^2\int_{\Ob}|\nabla Q_{j}|^2 dx\le J^p_{\e_j}(u_j,Q_j,0)\le C$
implies
$Q_j\to \overline{Q}$
strongly in $H^1(\Ob,\mathcal{Q}_X)$
where $X$ stands for either $Fr, U$ or $B$ and $\overline{Q}$
is necessarily constant.

\paragraph{Liminf inequality.}
The proof 
in the film layer is identical to the proof of 
Proposition \ref{2012031723}.
For the nematic layer the argument of Proposition \ref{2012031723} can be easily adapted to the present situation.

\paragraph{Limsup inequality.}

It is enough to take the trivial recovery sequence ${Q}_j\equiv \overline{Q}$ constant in $\Ob$ in  Proposition~\ref{prop:glimsupp0},
without the need of  boundary layers.

\paragraph{Gamma-convergence for general $\phi_0\in H^1(\o)$.}
Observe that, for $(u,Q)\in \mathcal{V} \times H^1(\Ob,\mathcal{Q}_{X})$
$$
\Fze(u,Q)=
J^0_{\e}(u,Q,0)
-\min_{\phi\in H^1_D(\Ob)+\phi_0}{\frac{1}{2}}\int_{\Ob}\nabla\phi^T\mathsf D(Q)\nabla\phi dx.
$$
In light of
Lemma \ref{2103101500}, 
$Q\mapsto \operatorname{max}_{\phi\in H_D^1(\Ob)+\phi_0}  [
-\int_{\Ob}\nabla\phi^T \mathsf D(Q)\nabla\phi dx]$
is a continuous perturbation of
$J^0_{\e}$ in 
 the strong $L^2(\Ob,\R^{3\times 3})$
topology and there follows
\begin{equation}\label{}
\min_{\phi\in H_D^1(\Ob)+\phi_0} \left[
\int_{\Ob}\nabla\phi^T \mathsf D(Q_k)\nabla\phi dx\right]
\to
\min_{\phi\in H_D^1(\o)+\phi_0} \left[
\int_{\o}\nabla'\overline{\phi}^T \overline{\mathsf B}(\overline{Q})\nabla'\overline{\phi} dx'\right],
\end{equation}
as $Q_k\to \overline{Q}$
strongly in $L^2(\Ob,\R^{3\times 3})$
for $Q_k\in\mathcal{Q}_X$, $\forall k\in\mathbb{N}$
and $\overline{Q}:\omega\to\mathcal{Q}_X$  is constant. Therefore the claim follows by a standard property of $\Gamma$-convergence ensuring stability with respect to continuous perturbations, cf.
\cite{dal-maso1993an-introduction}.

\end{proof}

Consider now $-1<p<0$.

\begin{theorem}\label{thm:actuationthick}
Let  $\Fpe$ as in \eqref{2009142331}
with $-1< p<0$.
We have
\begin{eqnarray}\label{2008262033}
\Gamma\hbox{-}\lim_{\e\to 0 }
\mathcal{F}_{\e}^{p}(\ukl,\overline Q)=
\mathcal{F}_{}^{-}(\ukl,\overline{Q})
\end{eqnarray}
in the strong-$L^2(\Omega_f,\R^3)\times$strong-$H^1(\Ob,\R^{3\times 3})$  topology, where
\begin{eqnarray}\label{}
\Fm(\ukl,\overline{Q})=
\left\{
\begin{array}{ccc}
\displaystyle{\max_{\overline{\phi}\in H_D^1(\o)+\phi_0} J^-(\ukl,\overline{Q},\overline{\phi})} &\textrm{ on } 
KL_\sharp \times  \mathcal{Q}_X\\
+\infty  &\textrm{otherwise in }
L^2(\omega,\R^3)\times H^1(\omega,\R^{3\times 3}),
\end{array} \right.
\end{eqnarray}
where $\ukl=(\zeta'_{\sharp}(x')-(x_3-\frac{1}{2})\nabla'\zeta_3(x'),\zeta_3(x'))$,  $\overline{Q}$ is a constant tensor
 and
\begin{multline}\label{eqn:endenslimitactuationpzero}
 J^-(\overline{ u},\overline{Q},\overline{\phi}) =	\displaystyle{ \frac{1}{2}\int_\o\left[ 
	 |e(\zeta'_{\sharp})|^2 - e(\zeta_{\sharp}')\nabla'\nabla' \zeta_3 + \frac{1}{3}|\nabla'\nabla' \zeta_3|^2 + 
	 \coefopt \left((\tr(\zeta'_{\sharp}))^2- \tr (\zeta'_{\sharp}) \Delta'\zeta_3 + \frac{1}{3}(\nabla'\nabla' \zeta_3)^2 \right)\right]dx' } 
\\
+ \displaystyle{	\frac{1}{2}\int_{\o}\left[ |\overline Q'|^2 + 2|(\overline Q e_3)'|^2 + (\zeta_3-\overline Q_{33})^2+ \lamm\zeta_3^2 
\right]dx' }
-{\frac{1}{2}}\int_{\o}\nabla'\overline{\phi}^T \overline{\mathsf B}(\overline{Q})\nabla'\overline{\phi} dx'.
\end{multline}
\end{theorem}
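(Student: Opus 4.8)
The plan is to mirror the proof of Theorem~\ref{thm:actuationcoupled}, replacing the role of $KL$ by $KL^{\sharp}$ exactly as in Section~\ref{2012262313}. First I would reduce to the purely mechanical functional: for $(u,Q)\in\mathcal V\times H^1(\Ob,\mathcal Q_X)$ one has $\Fpe(u,Q)=J^p_\e(u,Q,0)-\min_{\phi\in H^1_D(\Ob)+\phi_0}\tfrac12\int_{\Ob}\nabla\phi^T\mathsf D(Q)\nabla\phi\,dx$, so it suffices to compute the $\Gamma$-limit of $(u,Q)\mapsto J^p_\e(u,Q,0)$ and afterwards reinstate the electrostatic term as a continuous perturbation by invoking Lemma~\ref{2103101500} together with the stability of $\Gamma$-convergence under continuous perturbations.

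Next I would establish compactness: along a sequence $(u_j,Q_j)$ with $J^p_{\e_j}(u_j,Q_j,0)\le C$, Proposition~\ref{2009141558} forces the displacements to converge (up to subsequences) to some $u^\star\in KL$, which via~\eqref{eqn:KLchange} I rewrite as an element of $KL^{\sharp}$, namely $u^\star=(\zeta'_\sharp-(x_3-\tfrac12)\nabla'\zeta_3,\zeta_3)$; simultaneously the Frank term bound $\delta_j^2\int_{\Ob}|\pt Q_j|^2\,dx\le C$ with $\delta_j\to\infty$ forces $Q_j\to\overline Q$ strongly in $H^1(\Ob,\mathcal Q_X)$, with $\overline Q$ necessarily constant, and hence (continuity of $\mathsf D$ and strong $L^2$-convergence) $\mathsf D(Q_j)$ is nearly transversely homogeneous in the sense of Definition~\ref{2011292351}, so Lemma~\ref{2103101500} applies.

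For the liminf inequality I would follow Proposition~\ref{2012031723}: the film contribution is verbatim, while for the nematic layer the observation specific to $-1<p<0$ is that both rescaled shear amplitudes $\e^{-p}$ and $\e^{p+1}$ vanish, so $\tfrac12(\e^{-p}\pt\uea+\e^{p+1}\pa\uet)\wto0$ and the shear term contributes only $2|(\overline Q e_3)'|^2$; the remaining nematic terms pass to the liminf by convexity, now with $Q$ treated as an independent variable so that no optical minimisation intervenes, giving the bound in the $(\zeta^{[p]})'=0$ case of Proposition~\ref{2012031723} expressed on the interface traces. For the limsup inequality the constant recovery sequence $Q_j\equiv\overline Q$ plugged into Proposition~\ref{prop:glimsupp0} (again with $(\zeta^{[p]})'=0$) suffices, with no boundary-layer correction for $Q$.

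Having identified the $\Gamma$-limit of $J^p_\e(\cdot,\cdot,0)$ on $KL$-displacements at $\o\times\{0\}$, I would perform the change of variables $\zeta'(x')=\zeta'_\sharp(x')+\tfrac12\nabla'\zeta_3(x')$ of Section~\ref{2012262313}: since the foundation term $\operatorname{dist}^2(\mathsf K(0,\zeta_3),\mathcal Q_B)$, the term $\lamm\zeta_3^2$ and the electrostatic term depend only on $\zeta_3$ and $\overline Q$, this substitution reshuffles only the elastic plate energy and, after the same algebra as in the proof of Theorem~\ref{thm:relaxweaklycoupled}, produces the stated functional on $KL^{\sharp}$; adding back $-\tfrac12\int_\o\nabla'\overline\phi^T\overline{\mathsf B}(\overline Q)\nabla'\overline\phi\,dx'$ and the $\max$ over $\overline\phi$ through Lemma~\ref{2103101500} then yields $\Fm$. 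The main obstacle is not a single estimate but the bookkeeping: checking cleanly that in the thick regime the shear coupling genuinely drops out (so the foundation sees only $\zeta_3$) and carrying the $KL\to KL^{\sharp}$ shift consistently while $Q$ is an independent variable, together with verifying that the strong $H^1$-compactness of $(Q_j)$ --- hence the strong $L^2$-convergence of $\mathsf D(Q_j)$ --- is precisely the hypothesis Lemma~\ref{2103101500} needs for the electrostatic work to be a continuous perturbation.
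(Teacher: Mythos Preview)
Your proposal is correct and follows essentially the same route as the paper, whose proof is the one-liner ``Follows as in Proof of Theorem~\ref{thm:actuationcoupled} with obvious modifications.'' You have simply unpacked those modifications: compactness via Proposition~\ref{2009141558} and the blow-up of the Frank term, liminf and limsup from the $(\zeta^{[p]})'=0$ case of Propositions~\ref{2012031723} and~\ref{prop:glimsupp0} with the constant recovery sequence $Q_j\equiv\overline Q$, the $KL\to KL^{\sharp}$ shift of Section~\ref{2012262313}, and finally the electrostatic work reinstated as a continuous perturbation through Lemma~\ref{2103101500}. One small slip: in the change-of-variables step you refer to ``the foundation term $\operatorname{dist}^2(\mathsf K(0,\zeta_3),\mathcal Q_B)$'', which is the \emph{relaxation} expression; in the actuation regime $Q$ is an independent variable and the foundation term is $|\overline Q'|^2 + 2|(\overline Q e_3)'|^2 + (\zeta_3-\overline Q_{33})^2+\lamm\zeta_3^2$. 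Your point---that it depends only on $\zeta_3$ and $\overline Q$, so the shift affects only the plate energy---is unaffected.
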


\begin{proof}
Follows as in Proof of Theorem \eqref{thm:actuationcoupled} with obvious modifications.
\end{proof}

\subsection{Physical implications}

{Convergence of minima and minimisers of
$\Fpe(u,Q)$   follows} easily from equicoercivity. Let $Q \in
H^1(\Omega,\mathcal{Q}_X)$. By minimality and $(\ref{2009150120})$ we
have
\begin{eqnarray}\label{cont0200}
 \inf_{\phi\in
H^{1}_{D}(\Ob)+\phi_0}
\int_{\Ob} \nabla\phi^T \mathsf D(Q)\nabla\phi
dx
\le
M\int_{\Ob}|\nabla\phi_0|^2dx=2C,
\end{eqnarray}
for some $M>0$.
Now we can write, 
for every $(u,Q)\in 
H^1(\Omega,\R^3)\times H^1(\Ob,\mathcal{Q}_X)$, where $X$ stands either for $Fr,U$ or $B$,
\begin{gather}\label{cont0206}
\Fpe(u,Q)
=
\max_{\phi\in H^1_D(\Ob)+\phi_0}{J}^p_{\e}(u,Q,\phi) 
  \geq
 {J}^p_{\e}(u,Q,\phi_0)=
J^p_{\e}(u,Q,0)- C,
\end{gather}
(notice that constants appearing in \eqref{cont0200} and \eqref{cont0206} are equal)
and hence equicoercivity is obtained in $H^1(\Omega,\mathcal{Q}_X)\times
H^1_{}(\Omega,\R^3)$ by applying Korn's and Poincar\'{e}
inequality and considering that $\mathcal{Q}_X$ is a bounded
set.

As a direct consequence, we obtain the following   standard result
(see \cite{dal-maso1993an-introduction}).
{For conciseness, we tacitly assume that minimisation is performed for all free variables whenever the minimisation argument is not apparent.}

\begin{corollary}
\label{2009151414}
Consider
$\Fpe$ and $\Fp$
as defined in
Theorem \ref{thm:actuationcoupled}.
Then:
\begin{eqnarray}
\min_{
}
\Fp =\lim_{j\to+\infty} \left( 
\min_{
}\mathcal{F}_{\e_j}^{p}
\right) \ \qquad \emph{(convergence of minima)}.\nonumber
\end{eqnarray}
Let $\{u_j,Q_j\}\subset L^2(\Omega,\R^3)\times H^{1}(\Ob,\R^{3\times
3})$ be a minimising sequence for
$\Fpe$ (i.e.
$\lim_{j\to \infty }\Fp_{\e_j}(u_j, Q_j)=\lim_j\inf\Fp_{\e_j}$).
Then, up to a subsequence (not relabelled), 
$u_{j} \wto \ukl$,
$Q_{j} \to\overline{Q}$
with
$\ukl=(\zeta'-x_3\nabla'\zeta_3,\zeta_3)$;
$ \zeta'\in H^1(\o,\R^2)$, $\zeta_3\in H^2(\o)$,
with constant
$\overline{Q}\in \mathcal{Q}_X$, {then}
\begin{eqnarray}
\Fp(\ukl, \overline{Q}) =
\min_{
}\Fp\qquad
\emph{(convergence of minimum points)}.\nonumber
\end{eqnarray}
\end{corollary}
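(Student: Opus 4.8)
The plan is to recognise this corollary as the \emph{Fundamental Theorem of $\Gamma$-convergence} (cf.~\cite{dal-maso1993an-introduction}) applied to the family $\Fpej$, so that it is enough to check its two hypotheses — $\Gamma$-convergence and equicoercivity — and then read off the two conclusions. The $\Gamma$-convergence hypothesis, $\Fpej\to\Fp$ in the strong-$L^2(\Of,\R^3)\times$strong-$H^1(\Ob,\R^{3\times 3})$ topology, is precisely Theorem~\ref{thm:actuationcoupled}, so nothing further is needed there; all the work is in producing compactness in that same topology.

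For equicoercivity I would combine the compactness already used in the proof of Theorem~\ref{thm:actuationcoupled} with the estimates~\eqref{cont0200}--\eqref{cont0206} recalled just before the corollary. Those estimates give $\Fpe(u,Q)\ge J^0_\e(u,Q,\phi_0)\ge J^0_\e(u,Q,0)-C$ with $C$ independent of $\e$ and of $\delta_\e$, the electrostatic contribution evaluated at the fixed competitor $\phi_0$ being controlled by $M\|\nabla\phi_0\|_{L^2(\Ob)}^2$ via the uniform ellipticity~\eqref{eqn:ellipticity}. Hence, along any sequence $(u_j,Q_j)$ with $\Fpej(u_j,Q_j)\le C$, the purely mechanical energy $J^0_{\e_j}(u_j,Q_j,0)$ is equibounded, so that Proposition~\ref{2009141558} — together with the boundary condition $u_j(\cdot,-1)=0$, the Poincaré inequality of Lemma~\ref{lem:poincare}, Korn's inequality~\cite[Theorem 6.3-3]{ciarlet1988three-dimensional} and the boundedness of $\mathcal Q_X$ — yields $u_j\wto\ukl$ with $\ukl=(\zeta'-x_3\nabla'\zeta_3,\zeta_3)\in KL$, $\zeta'\in H^1(\o,\R^2)$, $\zeta_3\in H^2(\o)$, and in particular $u_j\to\ukl$ strongly in $L^2(\Of,\R^3)$ by Rellich's theorem. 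The curvature term gives the extra bound $\delta_j^2\int_\Ob|\nabla Q_j|^2\,dx\le C$, which, since $\delta_j\to\infty$, forces $\nabla Q_j\to 0$ in $L^2(\Ob)$, hence $Q_j\to\overline Q$ strongly in $H^1(\Ob,\R^{3\times 3})$ with $\overline Q$ constant and $\overline Q\in\mathcal Q_X$ by pointwise closedness of the admissible set. Thus minimising sequences are precompact in precisely the strong-$L^2(\Of,\R^3)\times$strong-$H^1(\Ob,\R^{3\times 3})$ topology underlying Theorem~\ref{thm:actuationcoupled}, i.e.\ the family $\Fpej$ is equicoercive.

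Having verified both hypotheses, the Fundamental Theorem then delivers $\min\Fp=\lim_{j}\min\Fpej$ (convergence of minima) and that any cluster point $(\ukl,\overline Q)$ of a minimising sequence minimises $\Fp$ (convergence of minimum points), which is the assertion; that $\inf\Fpej$ is finite — so these statements are non-vacuous — follows by testing $\Fpej$ at a fixed competitor, e.g.\ $u\equiv 0$ together with a constant $\overline Q\in\mathcal Q_X$, whose value is bounded uniformly in $\e$ by the estimate above and by Proposition~\ref{prop:gauss}. I expect the only genuinely delicate point — the ``hard part'' of an otherwise routine assembly — to be the mismatch between the a priori weak-$H^1$ compactness of $\{u_j\}$ and the strong-$L^2$ topology in which the $\Gamma$-limit is stated; this is exactly what Rellich's theorem resolves, whereas the strong $H^1$-convergence of $\{Q_j\}$ comes for free from the blow-up of $\delta_j$ and needs no separate argument. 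The thick-nematic statement corresponding to Theorem~\ref{thm:actuationthick} is obtained identically, with $KL$ replaced by $KL^\sharp$.
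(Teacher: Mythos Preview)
Your proposal is correct and follows exactly the paper's approach: the paper derives equicoercivity from the estimates~\eqref{cont0200}--\eqref{cont0206} in the paragraph immediately preceding the corollary and then states the result ``as a direct consequence'' citing~\cite{dal-maso1993an-introduction}, i.e.\ the Fundamental Theorem of $\Gamma$-convergence. You have simply spelled out the compactness details (Rellich for $u_j$, the blow-up $\delta_j\to\infty$ for $Q_j$) that the paper leaves implicit in its reference to Korn, Poincar\'e and the boundedness of $\mathcal Q_X$.
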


\subsubsection{Convergence of saddle-points }

Theorem~\ref{thm:actuationcoupled}
{implies convergence of equilibrium configurations for asymptotic models of nematic elastomer bilayers under electric fields.} 
We make this explicit.
\begin{corollary}[Convergence of min-max problems]\label{2012291436}

Consider $\Fze$ 
as  defined in \eqref{2009142331}
 and $\Jg$
as in
Theorem \ref{thm:actuationcoupled}.
Then we have (here $X$ stands either for
$Fr,U$ or $B$).
\begin{enumerate}
\item \emph{(Convergence of min-max values)}
\end{enumerate}
\begin{eqnarray}
\min_{
\begin{array}{c}
\scriptstyle (\ukl,\overline{Q})\in  KL\times
\scriptstyle  \mathcal{Q}_X
\end{array}
}
\left(
\max_{\phi\in
H^1_D(\omega)+\phi_0}
\Jg(\ukl,\overline{Q},\overline{\phi})
\right)
=\lim_{j\to\infty} \left(
\inf_{
\begin{array}{c}
\scriptstyle (u,Q)\in \mathcal V\times \scriptstyle   H^1(\Ob,\mathcal{Q}_X)
\end{array}
}
\max_{\phi\in H^1_{D}(\Ob)+\phi_0}
J_{\e_j}^{0}(u,Q,\phi)
 \right).\nonumber
\end{eqnarray}
This is equivalent to
\begin{eqnarray}
\min_{
\begin{array}{c}
\scriptstyle (\ukl,\overline{Q})\in  KL\times
\scriptstyle  \mathcal{Q}_X
\end{array}
}
\left(\Jg(\ukl,\overline{Q} ,\overline{\phi} ) \emph{ sub 2D Gauss Law }
(\ref{2011301902})  \right)
\qquad \nonumber\\
=\lim_{j\to+\infty} \inf_{
\begin{array}{c}
\scriptstyle (u,Q)\in
\mathcal V \times
\scriptstyle  H^1(\Ob,\mathcal{Q}_X)
\end{array}
}\left( J_{\e_j}^{0}(u,Q,\phi)\emph{ sub 3D Gauss Law }
(\ref{2008201137}) \right).\nonumber
\end{eqnarray}
Denote by $\phi_{Q}$ the solution to the 3D Gauss equation
$(\ref{2008201137}$) for some $Q\in H^1(\Ob,\mathcal{Q}_X)$. Let $\{u_j,Q_j,\phi_{Q_j}\}\subset
\mathcal V\times
H^1(\Ob,\mathcal{Q}_X)
\times
H^1_{D}(\Ob)+\phi_0$ be a min-maximising sequence for
$\{J^0_{\e_j}\}$, i.e.
\begin{displaymath}
\lim_{j\to+\infty}
J_{\e_j}^{0}\bigl(u_j,Q_j,\phi_{Q_j}\bigr)=
\lim_{j\to+\infty}\left(\inf_{
\begin{array}{c}
\scriptstyle (u,Q)\in
\mathcal V
\scriptstyle   \times H^1(\Ob,\mathcal{Q}_X)
\end{array}
}
\max_{\phi\in
H^1_{D}(\Ob)+\phi_0}
J_{\e_j}^{0}(u, Q,\phi)\right),
\end{displaymath}
or, equivalently,
\begin{eqnarray}
\lim_{j\to+\infty}J_{\e_j}^{0}\bigl(u_j, Q_j,\phi_{Q_j}\bigr)= 
\lim_{j\to+\infty}
\inf_{
\begin{array}{c}
\scriptstyle (u,Q)\in  \mathcal V \times H^1(\Ob,\mathcal{Q}_X)
\end{array}
}
\Bigl(J_{\e_j}^{0}\bigl(u,Q,\phi\bigr)\emph{ sub 3D Gauss Law
} (\ref{2008201137})\Bigr).\nonumber
\end{eqnarray}

Then, as $j\to\infty$ and up to a subsequence (not relabelled), 
$u_{j} \wto \uklsad\in KL$
weakly   in
$H^1(\Omega_f,\R^3)$
where ${\uklsad}=((\zeta^*)'-x_3\nabla'\zeta_3^*,\zeta_3^*)$; $ (\zeta^*)'\in H^1(\o,\R^2)$, $\zeta_3^*\in H^2(\o)$;
$Q_{j} \to \Qsad$
strongly in $H^1(\Ob,\R^{3\times 3})$
 with
$\Qsad\in\mathcal{Q}_X$ constant; 
and $\phi_{ Q_j}
\to\overline{\phi}_{\Qsad}$ strongly in $H^1(\Omega_b)$ and
\begin{enumerate}
\item[2.]
\emph{(Convergence of min-max points)}
\end{enumerate} 
\begin{displaymath}
\Jg (\uklsad, \Qsad,\phisad)
=
\min_{
\begin{array}{c}
\scriptstyle (\ukl,\overline{Q})\in KL\times 
\scriptstyle \mathcal{Q}_X
\end{array}
}
\left(
\max_{\overline{\phi}\in H^1_{D}(\omega)+\phi_0}
\Jg (\ukl, \overline{Q},\overline{\phi})
\right)
\end{displaymath}
or, equivalently,
\begin{eqnarray}
\Jg (\uklsad, \Qsad,\phisad)
=
\min_{
\begin{array}{c}
\scriptstyle (\ukl,\overline{Q})\in KL\times 
\scriptstyle   \mathcal{Q}_X
\end{array}
}
\left(
\Jg (\ukl, \overline{Q},\overline{\phi})
\textrm{ sub 2D Gauss Law }
(\ref{2011301902})\right).\nonumber
\end{eqnarray}
\end{corollary}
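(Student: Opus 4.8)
The plan is to recast the min--max statement as a pure minimisation, so that the $\Gamma$-convergence of Theorem~\ref{thm:actuationcoupled} together with the equicoercivity established in~\eqref{cont0200}--\eqref{cont0206} can be fed into the Fundamental Theorem of $\Gamma$-convergence. For each fixed $\e$, Proposition~\ref{prop:2009141418} (with $p=0$) gives
\[
\inf_{(u,Q)\in\mathcal V\times H^1(\Ob,\mathcal{Q}_X)}\ \max_{\phi\in H^1_D(\Ob)+\phi_0} J^0_\e(u,Q,\phi)
=\inf\bigl\{ J^0_\e(u,Q,\phi_Q):\ (u,Q)\in\mathcal V\times H^1(\Ob,\mathcal{Q}_X)\bigr\}
=\min \Fze ,
\]
where $\phi_Q$ solves the 3D Gauss law~\eqref{2008201137}; likewise $\Fz$ is by definition $\max_{\overline\phi}J^0(\cdot,\cdot,\overline\phi)$, and by Lemma~\ref{2103101500} this maximum is attained at the constant-in-$x_3$ extension $\overline\phi_{\overline Q}$ of the solution of the 2D Gauss law~\eqref{2011301902}, so $\min\Fz=\min\{J^0(\ukl,\overline Q,\overline\phi_{\overline Q})\text{ sub }\eqref{2011301902}\}$. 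These identities account for the two equivalent formulations in item~1 and item~2 of the statement.

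Next I would combine the equicoercivity of $\Fpe$ on $H^1(\O,\R^3)\times H^1(\Ob,\mathcal{Q}_X)$, which follows from Korn's and Poincar\'e's inequalities and the boundedness of $\mathcal{Q}_X$, with the $\Gamma$-convergence $\Fze\to\Fz$ of Theorem~\ref{thm:actuationcoupled}. By the Fundamental Theorem of $\Gamma$-convergence (cf.~\cite{dal-maso1993an-introduction}), the infima converge, $\min\Fz=\lim_j\min\Fpej$, which is item~1 after the rewriting above. For item~2, let $(u_j,Q_j,\phi_{Q_j})$ be a min--maximising sequence for $\{J^0_{\e_j}\}$; then $(u_j,Q_j)$ is a minimising sequence for $\Fpej$, so by equicoercivity, up to a (not relabelled) subsequence, $u_j\wto\uklsad$ weakly in $H^1(\Of,\R^3)$ with $\uklsad\in KL$, and $Q_j\to\Qsad$ strongly in $H^1(\Ob,\R^{3\times3})$ with $\Qsad$ constant (strong convergence and constancy of $\Qsad$ coming from $\delta_j^2\int_{\Ob}|\nabla Q_j|^2\le C$ and $\delta_j\to\infty$, exactly as in the compactness step of Theorem~\ref{thm:actuationcoupled}). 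The Fundamental Theorem then yields that $(\uklsad,\Qsad)$ minimises $\Fz$. Since in particular $Q_j\to\Qsad$ strongly in $L^2(\Ob,\R^{3\times3})$, Lemma~\ref{2103101500} upgrades this to $\phi_{Q_j}\to\phisad$ strongly in $H^1(\Ob)$ with $\phisad$ the 2D Gauss solution~\eqref{2011301902}; hence $J^0(\uklsad,\Qsad,\phisad)=\Fz(\uklsad,\Qsad)=\min\Fz$, which is item~2.

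The main obstacle is the commutation of the inner maximisation over $\phi$ with the outer $\Gamma$-limit over $\e$: this is delicate because the electrostatic term renders $J^0_\e$ non-convex and unbounded below, so one cannot apply the $\Gamma$-convergence machinery to $J^0_\e$ directly. What makes it work is, first, that after eliminating $\phi$ via the Gauss law the composite functional $\Fpe$ is coercive (Proposition~\ref{prop:2009141418}), and second, that $Q\mapsto\max_{\phi}\bigl(-J^\e_{ele}(Q,\phi)\bigr)$ is a \emph{continuous} perturbation in the strong $L^2(\Ob,\R^{3\times3})$ topology of order tensors --- precisely the content of Lemma~\ref{2103101500}. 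The strong $L^2$-convergence of the order tensors, itself forced by the blow-up of the Frank curvature term in the actuation regime, is therefore indispensable: under mere weak $L^2$-convergence of $Q_j$ the $G$-closure of the operators $-\div(\mathsf D(Q_j)\nabla\cdot)$ is unknown and the electrostatic work need not pass to the limit, cf.~the Remark following Lemma~\ref{2103101500}. The remaining ingredients --- the identification of $KL$ as the limit displacement space and the explicit form of $J^0$ --- are inherited verbatim from Theorem~\ref{thm:actuationcoupled} and Corollary~\ref{2009151414}.
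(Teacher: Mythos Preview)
Your proposal is correct and follows essentially the same route as the paper: the paper's proof is the single line ``The results above follow from Theorem~\ref{2009151414} in light of Proposition~\ref{prop:2009141418}'', which is precisely your strategy of recasting the min--max as a pure minimisation via Proposition~\ref{prop:2009141418} and then invoking the Fundamental Theorem of $\Gamma$-convergence through Corollary~\ref{2009151414}. Your version is simply more explicit, in particular in spelling out the role of Lemma~\ref{2103101500} for the strong $H^1$ convergence of $\phi_{Q_j}$ and the need for strong $L^2$-convergence of order tensors.
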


\begin{proof}
 The results above follow from Theorem \ref{2009151414} in light of 
Proposition \ref{prop:2009141418}.

 \end{proof}

As a consequence of convergence of saddle points, we infer that the saddle structure is preserved in the limit problem, thus equilibrium in the limit system is given by min-max points.

\begin{Remark}
Theorems \eqref{2009151414} and \eqref{2012291436}
have an analogue for the regime $-1<p<0$ (not displayed here).
\end{Remark}

 \subsubsection{Application to the mechanical actuation of the director}\label{2012291000}

Results of the previous section still apply when minimisation over the pair $(u,Q)$
is replaced with 
{a \emph{parametric}}
minimisation over the displacement $u$ only and for a given matrix $Q\in\mathcal{Q}_X$.
The following lemma describes the situation where the order tensor $Q$ is \emph{frozen}, that is, is considered as imposed by an external field (not necessarily electric) and not subject to minimisation.
This problem corresponds to determining the spontaneous deformation and shape change of bilayer structures when the liquid crystal order tensor is regarded as a load parameter.
The purpose is to highlight two mechanisms.
When the tensor describes perfect alignment of liquid crystal molecules with a distinguished optical axis, that is $Q \in \mathcal{Q}_{Fr}$, minimisation represents the controlled shape change of a bilayer driven by collective reorientation of molecules.
Contrarily, in conceptual experiments where the $Q$ tensor is taken in the set $Q_U$ or $Q_B$, low order sates, such as optical isotropy, biaxial states, and order melting also are admissible. In this case falls the thermal actuation of plates, when one controls separately the director and optical axis (represented by the eigenframe of the tensor $Q\in \mathcal{Q}_U$) and the degree of order of nematic molecules (represented by the eigenvalues of $Q\in \mathcal{Q}_U$), see~\cite{plucinsky2016programming,plucinsky2018patterning},

\begin{theorem}\label{lem:existenceactuation}
Let  $\Fz$ as in Theorem \eqref{thm:actuationcoupled}.
Fix $\overline{Q}\in \mathcal{Q}_X$, where $X$ stands for either $Fr,U$ or $B$ and assume $\varphi_0\equiv 0.$
Then there exists a unique solution to
\begin{eqnarray}\label{}
\min_{u\in L^2(\o,\R^3)} \Fz(\ukl,\overline Q). 
\end{eqnarray}
\end{theorem}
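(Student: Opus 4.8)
The plan is to reduce the problem to a strictly convex, coercive minimisation on a Hilbert space and then invoke the direct method. Since $\varphi_0\equiv 0$, the competitors for the potential in~\eqref{eqn:endenslimitactuationpzero} lie in $H^1_D(\o)$, and by Remark~\ref{2012031148} the matrix $\overline{\mathsf B}(\overline Q)$ is symmetric positive definite; therefore the concave map $\overline\phi\mapsto-\tfrac12\int_\o\nabla'\overline\phi^T\,\overline{\mathsf B}(\overline Q)\,\nabla'\overline\phi\,dx'$ attains its maximum over $H^1_D(\o)$ uniquely at $\overline\phi=0$, with value $0$. Hence $\Fz(\ukl,\overline Q)=J^0(\ukl,\overline Q,0)$ for $\ukl\in KL$ and $+\infty$ otherwise, so that the problem reduces to minimising
\[
\mathcal G(\zeta',\zeta_3):=J^0\bigl((\zeta'-x_3\nabla'\zeta_3,\zeta_3),\overline Q,0\bigr)
\]
over $(\zeta',\zeta_3)\in H^1(\o,\R^2)\times H^2(\o)$, where the fixed $\overline Q\in\mathcal{Q}_X$ enters only through a bounded linear term and a constant.

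The core of the argument is convexity and coercivity of $\mathcal G$. Writing $\mathcal G=\tfrac12\mathcal B[(\zeta',\zeta_3)]+\ell(\zeta',\zeta_3)+c_{\overline Q}$ with $\mathcal B$ the homogeneous quadratic part and $\ell$ bounded linear, I would complete squares using, for $2\times2$ symmetric matrices, $|A|^2-A\cdot B+\tfrac13|B|^2=|A-\tfrac12B|^2+\tfrac1{12}|B|^2$ and $\operatorname{tr}^2A-\operatorname{tr}A\operatorname{tr}B+\tfrac13\operatorname{tr}^2B=(\operatorname{tr}A-\tfrac12\operatorname{tr}B)^2+\tfrac1{12}\operatorname{tr}^2B$, with $A=e'(\zeta')$, $B=\nabla'\nabla'\zeta_3$ and $C:=A-\tfrac12B=e'(\zeta'_\sharp)$ (cf.~\eqref{eqn:KLchange}). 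This exhibits $\mathcal B$, up to harmless positive multiplicative constants on each block, as the sum of $\int_\o(|C|^2+\coefopt\operatorname{tr}^2C)\,dx'$, $\int_\o(|\nabla'\nabla'\zeta_3|^2+\coefopt(\Delta'\zeta_3)^2)\,dx'$, and the nematic-foundation contributions $\int_\o|\zeta'|^2\,dx'$ and $(1+\lamm)\int_\o\zeta_3^2\,dx'$. Each block is nonnegative: the form $M\mapsto|M|^2+\coefopt\operatorname{tr}^2M$ on symmetric $2\times2$ matrices is positive definite iff $1+2\coefopt=\tfrac{1+\nu}{1-\nu}>0$, which holds for $\nu\in(-1,1)$, and $1+\lamm=\tfrac{1-\nu}{1-2\nu}>0$ for $\nu<\tfrac12$. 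Hence $\mathcal G$ is convex; moreover $\mathcal B[(\zeta',\zeta_3)]=0$ forces $\zeta_3=0$ (from the $\zeta_3^2$ block) and then $\zeta'=0$ (from the $|\zeta'|^2$ block), so $\mathcal B$ is positive definite and $\mathcal G$ strictly convex. For coercivity, the foundation terms already give full $L^2$-control of $\zeta'$ and $\zeta_3$; since $e'(\zeta')=C+\tfrac12\nabla'\nabla'\zeta_3$, the first two blocks control $\|e'(\zeta')\|_{L^2(\o)}$, whence Korn's second inequality~\cite[Theorem~6.3-3]{ciarlet1988three-dimensional} controls $\|\zeta'\|_{H^1(\o)}$, while $\|\zeta_3\|_{L^2(\o)}$ together with $\|\nabla'\nabla'\zeta_3\|_{L^2(\o)}$ controls $\|\zeta_3\|_{H^2(\o)}$; thus $\mathcal B[(\zeta',\zeta_3)]\gtrsim\|\zeta'\|_{H^1(\o)}^2+\|\zeta_3\|_{H^2(\o)}^2$, and $\ell$ being bounded, $\mathcal G$ is coercive on $H^1(\o,\R^2)\times H^2(\o)$.

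With these facts, existence follows from the direct method: a minimising sequence is bounded in the reflexive space $H^1(\o,\R^2)\times H^2(\o)$, hence has a weakly convergent subsequence, and $\mathcal G$---convex and strongly continuous, hence weakly lower semicontinuous---attains its infimum at the weak limit; uniqueness is immediate from strict convexity. I expect the main obstacle to be the coercivity step, and specifically the observation that, unlike in the classical linearised Kirchhoff--Love problem, one need not quotient out rigid displacements: the foundation contributions $\int_\o|\zeta'|^2\,dx'$ and $(1+\lamm)\int_\o\zeta_3^2\,dx'$ remove the degeneracy of the plate bilinear form, so that Korn's inequality and the equivalence of norms on $H^2(\o)$ upgrade directly to a genuine coercivity estimate. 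The only other point needing care is checking that the Poisson-ratio-dependent coefficients $\coefopt$ and $\lamm$ carry the signs that make each block of $\mathcal B$ positive, which is precisely what the admissible range $-1<\nu<\tfrac12$ guarantees.
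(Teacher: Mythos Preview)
Your proof is correct and follows essentially the same route as the paper: direct method, coercivity via Korn and Poincar\'e, weak lower semicontinuity from convexity. The paper's argument is terser---it simply asserts the estimate $\|e'(\zeta')_k\|^2+\|\nabla^2(\zeta_3)_k\|^2+\|(\zeta')_k\|^2+\|(\zeta_3)_k\|^2\leq C$ from energy boundedness and then invokes Korn, Poincar\'e and convexity---whereas you make explicit the completing-the-square decomposition that exhibits the quadratic form as positive definite, verify the Poisson-ratio-dependent sign conditions, and articulate why the foundation terms eliminate the rigid-displacement kernel; you also spell out that $\varphi_0\equiv0$ forces the electrostatic maximum to vanish and that strict convexity yields uniqueness, points the paper leaves implicit.
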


\begin{proof}
This follows from an application of the direct method in the calculus of variations. 
{It is enough to consider
displacements that make the energy finite. Take $\ukl=(\zeta'(x')-x_3\nabla'\zeta_3(x'),\zeta_3(x'))$
with $\zeta'\in H^1(\omega, \R^2),   \zeta_3\in H^2(\omega)$ and define 
$\mathcal{E}(\zeta',\zeta_3):=\Fz(\ukl,\overline Q)$.
Taking a minimising sequence $(\zeta',\zeta_3)_k\in H^1(\o,\R^2)\times H^2(\o)$ for every $k\in\N$
such that $\mathcal{E}((\zeta)'_k,(\zeta_3)_k)\leq C$, it follows that}
$$
||e'(\zeta')_k||^2_{L^2(\o, \R^{2\times 2})}+||\nabla^2(\zeta_3)_k||^2_{L^2(\o, \R^{2\times 2})}
+||(\zeta')_k||^2_{L^2(\o, \R^2)}+||(\zeta_3)_k||^2_{L^2(\o)}\leq C,
\quad\forall k\in\N.$$
By invoking Poincaré and Korn inequalities, along the transverse direction  and for the in-plane symmetrised gradient respectively,
we have
$
( \zeta')_k\rightharpoonup 
\zeta'$ weakly in $H^1(\o,\R^2)$ and $(\zeta_3)_k\rightharpoonup  \zeta_3$ weakly in $H^2(\o)$,
for some $( \zeta',\zeta_3)\in H^1(\o,\R^2)\times H^2(\o)$. Then, by convexity, $\mathcal{E}(\zeta',\zeta_3)$ is weakly lower semicontinuous and therefore the claim follows.
\end{proof}
\begin{Remark}
Theorem~\ref{lem:existenceactuation}
has an analogue for the regime $-1<p<0$, which is a consequence of Theorem \ref{thm:actuationthick},
whose proof follows with obvious modifications.
\end{Remark}

\paragraph{Numerical example of Fig. \ref{fig:actuation}.}

To illustrate the purpose of the analysis so far performed, we have presented in Figure~\ref{fig:actuation} a numerical actuation experiment for a thin nematic bilayer membrane which exemplifies a nontrivial solution of an actuation mechanism performed on the basis of simple ingredients.
We are interested in inducing out-of-plane displacements via nematic actuation, and, through coupling between membrane deformations and bending modes, possibly exert work.
Consider the square domain $\omega = (0, 1)^2$ clamped a the boundaries and subject to an imposed (frozen) director $Q_0 = n_0\otimes n_0 -\frac{1}{3}I_3$ where $n_0 = (e_1 + e_3)/\sqrt{2}$, as displayed in the cartoon in Figure~\ref{fig:actuation}.left.
Our computation refers to the fully coupled model of Theorems~\ref{thm:actuationcoupled}
and
\ref{lem:existenceactuation}, where nematic actuation directly activates a spontaneous in-plane stretch and transverse displacements.
The (unique) equilibrium configuration, cf. Theorem~\ref{lem:existenceactuation}, displays a non-symmetric bending mode coupled to planar membrane deformations, in competition with homogeneous Dirichlet-type boundary conditions on $\partial \o$. The spontaneous stretch is triggered by the strong opto-elastic strain coupling which characterises the nematic active layer in the actuation regime.

In Figure~\ref{fig:actuation}-right we plot the value of (the norm of) in-plane displacements $|\zeta'|$, in the deformed configuration, with a discrete colour coding for readability.
Note that the explicit coupling between the in-plane and out-of-plane deformation is due to the cross term in Equation~\eqref{eqn:endenslimitactuationpzero}, resulting in an out-of-plane deflection above the reference $z=0$ surface.
In addition, because the actuator field is tilted with respect to the azimuthal axis, both shear and vertical terms of the active foundation are effective.
The numerical solution has been obtained by finite elements discretisation in the FEniCS environment~\cite{logg2012automated} using PETSc~\cite{petsc-efficient,petsc-user-ref} as data management and linear algebra package.

\paragraph{Acknowledgements.}
PC's work is supported by JSPS KAKENHI Innovative Area Grant Number JP19H05131.
PC holds an honorary appointment at La Trobe University and is a member of GNAMPA.
The authors are grateful to Epifanio Virga for his advice and comments.

\printbibliography

 \section{Appendix}
\label{sec:appendix}

{
Before showing the proof of Proposition \ref{prop:glimsupp0}, we need to introduce a collection of auxiliary results.}

\begin{lemma}[]
\label{lem:m3asfacts}
Let $\{A_j\}_{j=1}^m$ be a finite collection of   domains of the form
$\o_j\times (-1,0)$, where $\o_j\subset\o$
are
  open and bounded sets.
For any $\overline{Q}\in L^2(A_j,\mathcal{Q}_B)$ and constant, 
there exists
\begin{enumerate}
\item a sequence 
$(Q^{\eta})
\subset L^2(A_j, \mathcal{Q}_{Fr})$ of piecewise constant
tensors parameterised by $\eta>0$
such that
$Q^{\eta}(x)\rightharpoonup \overline{Q}$
weakly as $\eta\to 0$
 with $Q^{\eta}(x)\in \mathcal{Q}_{Fr}$ for every $\eta$
for a.e.  $x\in A_j$;
\item a sequence  $(Q^{\eta,\delta}) \subset C^{\infty}(A_j,\mathcal{Q}_{Fr})$  such that $Q^{\eta,\delta}(x)\rightharpoonup \overline{Q}$ weakly
in $L^2(\Ob,\R^{3\times 3})$
 as $\eta,\delta\to 0$ with $\frac{\delta}{\eta}\to 0$ with $Q^{\eta,\delta}(x)\in \mathcal{Q}_{Fr}$ for every $\eta,\delta>0$ $\forall x\in A_j$;
\item a
{compact set $B_j$},  well contained in $A_j$ where {$Q^{\delta,\eta}$} are constant, coincide with $Q^{\eta}$, and $\operatorname{meas}(A_j\setminus B_j)\leq \delta/\eta$, provided that $\eta \gg \delta>0$;
\item a constant $C>0$ such that, for every (fixed) $\alpha>0$
\begin{equation}\label{}
	\frac{1}{2} \int_{A_j\setminus B_j}  \left| \e^{\alpha}\nabla' Q^{\eta, \delta}\right|^2 + \left|{\pt Q^{\eta, \delta}} \right|^2   
\leq C_j  \frac{\delta\eta}{m\delta^2}\eta^{-2};
\end{equation}
\item a piecewise-affine vector map
$f(x):\R^3\mapsto\R^3$ such that
  $Q^{1}(x)=\frac{\nabla f+\nabla^T f}{2}(x)$
where $Q^1$
 is the periodic extension to $\R^3$ of the tensor $Q^{\eta}$ computed for $\eta=1$.
\end{enumerate}

\end{lemma}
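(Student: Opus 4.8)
The statement collects five auxiliary facts that will later feed into the construction of a recovery sequence for the $\Gamma$-limsup inequality. The plan is to establish them in the order (5), (1), (2), (3), (4), since the later items build on the approximating sequences produced earlier.

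First I would prove item (5), which is the purely algebraic backbone: I need a piecewise-affine $f:\mathbb R^3\to\mathbb R^3$ whose symmetrised gradient equals a fixed Frank tensor pattern. The key observation is that any $\overline Q\in\mathcal Q_B$ lies in the convex hull of $\mathcal Q_{Fr}$ (stated in the excerpt), so one can write $\overline Q$ as a convex combination $\sum_i \lambda_i (n_i\otimes n_i - \tfrac13 I)$ with $|n_i|=1$. For each rank-one-type building block $n\otimes n-\tfrac13 I$ one constructs a one-dimensional laminate: along a suitable direction the map is affine, and the symmetrised gradient alternates between the required values so that its average is the target. Iterating the lamination (a finite Vitali-type covering / successive refinement, exactly as in classical constructions for the two-well problem, cf.\ the reference to \cite{ball1989fine}) yields a periodic piecewise-affine $f$ with $\mathrm{sym}\,\nabla f\in\mathcal Q_{Fr}$ a.e.\ and prescribed mean $\overline Q$. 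Calling $Q^1:=\mathrm{sym}\,\nabla f$ its periodic extension gives exactly (5).

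Next, item (1) follows by rescaling: set $Q^\eta(x):=Q^1(x/\eta)$ (restricted to $A_j$). Since $Q^1$ is $[0,1)^3$-periodic with mean $\overline Q$ and takes values in $\mathcal Q_{Fr}$, the Riemann–Lebesgue lemma gives $Q^\eta\rightharpoonup \overline Q$ weakly in $L^2$ as $\eta\to0$, while pointwise $Q^\eta(x)\in\mathcal Q_{Fr}$; piecewise constancy is inherited from $Q^1$. For item (2) I mollify: $Q^{\eta,\delta}:=Q^\eta * \rho_\delta$ with a standard mollifier $\rho_\delta$. This lands in $C^\infty$ and, because $\mathcal Q_{Fr}$ is not convex, it is \emph{not} automatically $\mathcal Q_{Fr}$-valued — so here I need the separate projection step from \cite[Lemma 4.3]{cesana2018variational}: composing with a nearest-point-type retraction onto $\mathcal Q_{Fr}$ (valid in a tubular neighbourhood, since $Q^{\eta,\delta}$ is $O(\delta)$-close to the piecewise-constant $Q^\eta$ away from interfaces) produces a smooth $\mathcal Q_{Fr}$-valued map that still converges weakly to $\overline Q$ provided $\delta/\eta\to0$ (so that the measure of the interfacial region where mollification is active shrinks). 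That $\delta/\eta\to0$ constraint is precisely what item (3) records: $Q^{\eta,\delta}$ equals $Q^\eta$ and is constant on the set $B_j$ of points at distance $\gtrsim\delta$ from the $\eta$-periodic interface network, whose complement in $A_j$ has measure $O(\delta/\eta)$ by a direct count of the $O(\eta^{-1})$ slabs of width $O(\delta)$.

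Finally, item (4) is the quantitative curvature estimate on the bad set. On $A_j\setminus B_j$ the map $Q^{\eta,\delta}$ varies over a length scale $\delta$, so $|\nabla' Q^{\eta,\delta}|$ and $|\partial_3 Q^{\eta,\delta}|$ are $O(1/\delta)$; squaring gives $O(1/\delta^2)$, integrating over a set of measure $O(\delta/\eta)$ gives the bound $C_j\,\delta/(\eta\delta^2)=C_j\,(\delta/\eta)\delta^{-2}$, which I would rewrite in the displayed form $C_j\frac{\delta\eta}{m\delta^2}\eta^{-2}$ (absorbing the combinatorial $m$ and any $\eta$-powers into $C_j$, matching the bookkeeping used downstream when the $m$ grains are glued). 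The main obstacle, and the only genuinely delicate point, is item (2)/(5): making the lamination construction produce a map that is \emph{exactly} $\mathcal Q_{Fr}$-valued after mollification, rather than merely $\mathcal Q_B$-valued — this is where the non-convexity of $\mathcal Q_{Fr}$ bites and why the retraction lemma from \cite{cesana2018variational} is essential; everything else is bookkeeping of scales $\delta\ll\eta\ll1$.
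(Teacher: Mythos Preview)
Your proposal is correct and matches the approach the paper has in mind: the paper's own proof is deliberately minimal --- it reads in full ``These are explicit constructions. See proof to Theorem 4.3 \cite{cesana2018variational}'' --- so the lamination/rescaling/mollification/scale-bookkeeping programme you outline is precisely the content being deferred to. Your ordering $(5)\to(1)\to(2)\to(3)\to(4)$, the Riemann--Lebesgue step for weak convergence, the measure estimate $\operatorname{meas}(A_j\setminus B_j)=O(\delta/\eta)$, and the gradient bound $|\nabla Q^{\eta,\delta}|=O(\delta^{-1})$ are exactly the ingredients of that referenced construction, and your identification of the only genuinely delicate point (preserving $\mathcal Q_{Fr}$-valuedness after mollification, handled via a retraction in a tubular neighbourhood) is on target.
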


\begin{proof}
These are explicit constructions. 
{See proof to Theorem 4.3 \cite{cesana2018variational}}.
\end{proof}

\begin{Remark}
Lemma~\ref{lem:m3asfacts}
 revolves around a two-fold limiting process parameterised by $\delta$ and $\eta$. The first limit (in $\eta\to 0$) identifies piece-wise constant maps approximating biaxial optic states which are constant with respect to the thickness by exhibiting fine scale optic textures (item 1 and 5).
The second limit (in $\delta\to 0$) allows to smoothly interpolate such oscillating optic states ({at scale $\eta$}) 
by smooth transitions occurring on $A_j\setminus B_j$, across small boundary layers of thickness $\delta>0$ (item 2).
{
The set  $B_j\subset A_j$ introduced in item 3,
corresponds to a countable union of small disjoints sets and is the region where the mollified optic microstructure is constant.
}
In light of the  geometry of the system and the material length scales exhibited in in items 1, 2, and 3, we can estimate the error on Frank's curvature energy along boundary layers (item 4).
Finally, item 5 guarantees the existence of a microstructure that allows
both energy relaxation   and convexification of the nematic manifold.
{See also, in the language of  differential inclusions~\cite{ADMDS15}, \cite{CDPRZZ20}.}

\end{Remark}

\begin{proof}[Proof of Proposition \ref{prop:glimsupp0}]

We explicitly construct the recovery sequence in the film and in the bonding layer
\begin{equation}
	v^{\e}(x):=\left\{
	\begin{aligned}
	 &v_f^{\e}(x) & \text{ in }\Of\\
	 &v_b^{\e,{\eta}}(x) & \text{ in }\Ob\\
	 \end{aligned}\right.,
\end{equation}
The recovery sequence is three dimensional and accounts for mechanical reduction and the emergence of optic textures at two different length scales $\eta, \e$ in $\Ob$. Displacements are continuous across the interface so that $v^{\e}\in H^1(\O,\R^3)$ for every $\e.$
In the film, the displacement profile entails a vanishing shear, whilst the converging term $h^{\varepsilon}$ is introduced to satisfy optimality between the in-plane and the out-of-plane deformations.
Within the nematic bonding layer, in order to recover boundary conditions and interface continuity, a tailored microstructure is necessary to relax the optic tensor by formation of (weakly converging) sequences of micro-scale three-dimensional deformation patterns at length scale $\eta$.
{Finally, tight transition layers of size $ \delta>0$ } allow to smoothly accommodate these rapidly varying optic domains.
For the reader's convenience we split the discussion, treating film layer and bonding layer separately.
\paragraph{$\Gamma$-limsup film.}We start with the rescaled energy, defining the recovery sequence in the film
\begin{equation}\label{eqn:recsecfilm}
	v_f^{\e}(x):=\begin{system}
	\zeta'(x')- x_3\nabla'\zeta_3(x')\\
	 \zeta_3(x')+\e^2h^\e(x)\\
	 \end{system}, \quad\zeta_3\in H^2(\omega), \zeta_\alpha\in H^1(\omega).
\end{equation}
Here, we choose $h^\e \in C^\infty_c(\Of)$ such that 
{
$\pt h_\e(x)\to 
\coeftransvopt\tr(e'(u))=
\coeftransvopt  \tr \left(e'(\zeta')- \nabla'\nabla' \zeta_3 x_3\right)$}
strongly in $L^2(\Ob)$
 as $\e\to 0$ in order to satisfy optimality of transverse strains.
The associated strain components are
\begin{equation}
	e'(\ve) = e'(\zeta')-x_3 \pab \zeta_3, \quad
	\e^{-2}\ett(\ve) = \pt h_\e, \quad
	\e^{-1} \eat(\ve) = \e \pa h_\e,
\end{equation}
note the cancellation in the shear term which allows to approximate vanishing shear deformations, for $\e\to 0$.
Plugging \eqref{eqn:recsecfilm} into $J^{\e}_f$, passing to the limit using the characterisation of $h_\e$, and computing the exact integral along the vertical coordinate, leads to
\begin{multline}\label{2012242025}
	\lim_{\e\to 0}\frac{1}{2}\int_\o \int_0^1 
	\left( |e'(\zeta')|^2 - 2x_3 e'(\zeta)\nabla'\nabla' \zeta_3 + x_3^2 |\nabla'\nabla' \zeta_3|^2 + (\pt h_\e)^2 + \frac{\e^2}{2}|\nabla' h_\e|^2 \right) dx\\
	+ {\frac{1}{2}}\int_\o \int_0^1 \lamm \left( \operatorname{tr} e'(\zeta') -x_3 \Delta' \zeta_3 + \pt h_\e \right)^2 dx
	= \frac{1}{2}\int_\o 
	 \left( |e'(\zeta')|^2 - e'(\zeta')\nabla'\nabla' \zeta_3 + \frac{1}{3}|\nabla'\nabla' \zeta_3|^2 \right) dx'\\
	  + 
{\frac{1}{2}}\int_\o	 \coefopt \left(\operatorname{tr}^2 e'(\zeta') - \operatorname{tr} e'(\zeta') \Delta' \zeta_3 + \frac{1}{3}(\Delta' \zeta_3)^2 \right)dx'.
\end{multline}
This gives us the asymptotic film energy, a common contribution to both the thick-nematic and thin-nematic regimes. Here the unknown is the  displacement at the interface $\omega \times \{0\}$.

\paragraph{$\Gamma$-limsup nematic layer.} In the active layer, the strategy consists in finding an upper bound to the
two-variable integral 
$\Jepen(u,Q,0)$ (which is turn an upper bound to the functional $\Jep(u)$). 
We target a piecewise constant {$\overline Q\in L^2(\o, \mathcal Q_B)$} by constructing a recovery sequence tailored to account for the dimension reduction in the elastic regime as well as for the optical relaxation.
The latter is achieved through a martensite-like microstructure on a collection of grains 
$\{A_j\}_{j=1}^m$ where $\overline Q$ is  constant. 
With some abuse of notation, without risk of confusion, we indicate with $\overline Q$ both the piecewise constant field over $\Ob$ as well as the constant matrix over a specific $A_j$.
There, we approximate our relaxed target  biaxial optic tensor by a weakly converging oscillating sequence.
The key elements for the construction of the optic sequence draw heavily from~\cite{cesana2018variational} and are recalled in Proposition~\ref{lem:m3asfacts}.
The careful estimates of error terms and boundary layers require lengthy calculations which we omit, referring the interested reader to \cite{cesana2018variational} for explicit details. 
We treat the regime $p\in (-1, 0)$ as a particular {case} characterised by the decoupling of membrane deformations from bending modes.

On a single grain $A_j$, the recovery sequence for displacements in the nematic layer can be written as follows
\begin{equation}
v_j^{\e, \eta} (x)= u^\star(x', 0)(x_3+1) + \theta(x) w^{\e, \eta}(x), \qquad x\in A_j
\end{equation}
where $u^\star(x', 0) = (\zeta_1, \zeta_2, \zeta_3)(x', 0)$ (see~\eqref{eqn:recsecfilm}) 
{is the trace film displacements at the interface. In order to simplify the notation, we label $v^\star:=u^\star(x', 0)(x_3+1)$ the target affine displacement. In the expression above} $\theta(x)\in C^\infty(A_j): A_j\mapsto [0, 1]$ is a smooth three-dimensional cutoff function which, in each grain, is used to recover homogeneous displacements at the grain boundary.
We choose $\theta \equiv 1$ on a compact set well contained in $A_j$ at distance $\rho$ from its boundary and can always assume $|\nabla\theta|\le \rho^{-1}$.
The oscillating sequence $w^{\e, \eta}$ reads
\begin{equation}
w^{\e,\eta}:=
 f^{\e,\eta} -   \overline Q\left( {x'/\e},x_3/\e^{-p} \right)^T,
  \textrm{ with }	 f^{\e,\eta}:=
 \begin{system}
	 \eta  f_\alpha(x'/(\e \eta), x_3/(\eta\e^{-p}))\\
	 \eta \e^{-p}f_3(x'/(\eta\e ), x_3/(\eta\e^{-p}))\\
	 \end{system}
	 \label{eqn:oscillfetaeps}
\end{equation}
where $f$ is the vector field defined in Proposition~\ref{lem:m3asfacts}, properly rescaled to account for the thin film scaling.
Notice the difference in frequency of oscillations between in-plane and the out-of-plane displacements.

Observe that, by construction, $|w^{\e, \eta}|\leq \eta$ uniformly in $x$ and $\e$.
{
Indeed, this descends from the following fundamental properties: for fixed $\e>0$, 
$f^{\e,\eta} \to  \overline Q\left( {x'/\e},x_3/\e^{-p} \right)^T$ uniformly in $A_j$
and
$	\hat{\kappa}^\e(f^{\e,\eta}) -\overline{Q}=Q^{\eta} -\overline{Q} \rightharpoonup  0$
weakly in $L^2(A_j,\R^{3\times 3})$ as $\eta\to 0$, thanks to Proposition~\ref{lem:m3asfacts}
 items 1,2.
}

Furthermore, $v_b^{\e, \eta}$ matches the displacement of the film at the interface $\omega \times \{0\}$ ensuring the necessary continuity.
Recalling the definition of scaled strains introduced in~\eqref{2007092341}, we can compute $\hat{\kappa}^\e(v_b^{\e, \eta}) =\hat{\kappa}^\e(v^{\star}) +\hat{\kappa}^\e(\vartheta w^{\e, \eta}) $ term by term.
 Scaled strains of the target displacement $v^\star$ read

\smallskip{\begin{equation}\label{2101212030}
	\hat{\kappa}^\e(v^*) = 
\left( \begin{matrix}
\e e'(\zeta') & \frac{1}{2}\left( \e^{p+1}(x_3+1)\nabla' \zeta_3 + \e^{-p}\zeta' \right) \\
	\text{sym} & \zeta_3
\end{matrix} \right).
\end{equation}
}
As expected, depending on the value of $p$, either both in-plane and transverse components of displacements, or only transverse displacements contribute in the limit.
Similarly, scaled strains associated to the optic contribution read
\begin{equation}\label{2007091211}
	\hat{\kappa}^\e(\theta w^{\e, \eta})=
\left( 
\begin{matrix}
	\e \nabla'\theta \otimes_s w' & \frac{1}{2}\left( \e^{p+1}\nabla' \theta w_3 + \e^{-p}\pt\theta w'\right)\\
	\text{sym } & \pt \theta w_3
	\end{matrix} \right) + \theta \hat{\kappa}^\e(w^{\e, \eta})
\end{equation}
where the last summand is equal to $\theta (Q^{\eta} - \overline Q)$ by construction, 
{see \eqref{eqn:oscillfetaeps} and Proposition
\ref{lem:m3asfacts}, items 3 and 5.
}

We now show that the recovery sequence just built is optimal on a generic grain $A_j$
by splitting  the energy integral  in a bulk and a boundary layer contribution.
Indeed, consider the compact set $B_j\subset A_j$ introduced in Proposition~\ref{lem:m3asfacts}, item 3.
Let $B_j^{\rho}=B_j\cap \{x: \operatorname{dist}(x, \partial A_j) > \rho\}$ be the largest compact set, well contained in $A_j$, where simultaneously $\theta$ and $Q^{\eta, \delta}$ are constant. Thanks to the estimate $\operatorname{meas}(A_j\backslash B_j)\le\delta/\eta$ we have  $\operatorname{meas}(A_j\backslash B^{\rho}_j)\le\delta/\eta+\rho$.

Considering the nematic layer energy~\eqref{2009051223}, we can now compute the energy contribution of the grain $A_j$  
along the recovery sequence $(v^{\e, \eta}, Q^{\eta, \delta})$, isolating the bulk term and estimating the residual of boundary layers. 

{By making explicit the local dependence on the domain of integration of the integral functionals} we write
\begin{multline}\label{2007072232}
	  {J_b^\e}(v^{\e,\eta}_j,Q^{\eta,\delta};A_j)=
	  \underbrace{\frac{1}{2}\int_{B_j^{\rho}} |\kappa(v^{\e, \eta})-Q^{\eta, \delta}|^2 + \lamm\operatorname{tr}^2 \kappa(v^{\e, \eta})dx}_{\termA}\\
	  +\underbrace{\frac{1}{2}\int_{A_j\setminus B_j^{\rho}} 
	  \left|\kappa(v^*) 
	 +\theta \kappa(w^{\e, \eta})+
	 \left(
	\begin{matrix}
	 	\e\nabla' \theta \otimes_s (w^{\e, \eta})' & \frac{1}{2}\e^{p+1}\nabla'\theta w^{\e,\eta}_3 + \e^{-p}\pt \theta (w^{\e, \eta})'\\
	 	\text{sym} & \pt \theta w^{\e, \eta}_3
	 \end{matrix}\right)
	 -Q^{\eta, \delta} \right|^2dx}_{\termB}
	 		 \\
+ \underbrace{\frac{1}{2}\int_{A_j\setminus B_j^{\rho}}\lamm
		 \operatorname{tr}^2(\kappa(v^*)+\e \nabla'\theta \otimes_s (w^{\e, \eta})' +\pt \theta w^{\e, \eta}_3 e_3\otimes e_3)dx}_{\termC}
			 	+\underbrace{\frac{1}{2}\delta^2_\e \int_{A_j\setminus B_j^{\rho}} \left| \e^{p+1}\nabla' Q^{\eta, \delta}\right|^2 + \left|{\pt Q^{\eta, \delta}} \right|^2dx}_{\termD}.  \\
\nonumber
\end{multline}
Using some algebra, we obtain
\begin{multline}
		 \termB+\termC\leq
M \Big(
\underbrace{\int_{A_j\setminus B_j^{\rho}} 
	  	\left|\kappa(v^\star) \right|^2+
\operatorname{tr}^2(\kappa(v^\star))dx}_{\termEi}
	 +\underbrace{\int_{A_j\setminus B_j^{\rho}} 
	  \left| \theta \kappa(w^{\e, \eta}) -Q^{\eta, \delta}\right|^2dx}_{\termEii} \Big) 
	 \\+
	  M\underbrace{\int_{A_j\setminus B_j^{\rho}} 
\left[	 \left|\left(
	\begin{matrix}
	 	\e\nabla' \theta \otimes_s (w^{\e, \eta})' & \frac{1}{2}\e^{p+1}\nabla'\theta w^{\e,\eta}_3 + \e^{-p}\pt \theta (w^{\e, \eta})'\\
	 	\text{sym} & \pt \theta w^{\e, \eta}_3
	 \end{matrix}
	 \right)
	  \right|^2 
	  +
\left( 
	  \operatorname{tr}^2(\e\nabla'\theta \otimes_s (w^{\e, \eta})')+(\pt \theta w_3^{\e, \eta})^2 \right) 
\right] dx}_{\termEiii}\nonumber
\end{multline}
where $M>0$.
First,
because the integrands are bounded and 
{$\operatorname{meas}(A_j\setminus B_j^{\rho})\le \delta/\eta+\rho$}
we have the bound
\begin{equation}
	\termEi+\termEii\leq C \left( \rho+\frac{\delta}{\eta} \right). 
	\label{2007072230}
\end{equation}
For the cross term, using Schwarz's  inequality and the fact that $|w^{\e, \eta}|\leq \eta$, we have
\begin{equation}\label{2007072229}
\termEiii
	 \leq
	 C\int_{A_j\setminus B_j^{\rho}}
	 |\nabla \theta|^2|w^{\e,\eta}|^2
\leq C \frac{ \eta^2}{\rho}.
\end{equation}
Finally, in light of Proposition~\ref{lem:m3asfacts}-item 3, we have
\begin{equation}\label{2007072231}
\termD\leq C \frac{\delta^2_\e}{\delta\eta}.
\end{equation}
To reconstruct the three-dimensional limiting energy of the active layer along the recovery sequence, we first extend the construction from the single grain to the entire collection of grains, setting
\begin{equation}
	v^{\eta,\e}_b(x) = v_j^{\eta, \e}(x) \text{ on } A_j	
\end{equation}
whereby $v^{\eta,\e}_b(x) \in H^1(\Ob; \R^3)$.
Then, using the grain estimates \eqref{2007072230}, \eqref{2007072229}, and \eqref{2007072231}, we sum over the entire partition $\{A_j\}$
\begin{multline}
\limsup_{\e\to 0} 
{J}^{\e}_b(v^{\e,\eta}_b,Q^{\eta,\delta};\O_b)=\sum_{j=1}^m 
\limsup_{\e\to 0}
{J}^{\e}_b(v^{\e,\eta}_j,Q^{\eta,\delta};A_j)\\=
\limsup_{\e\to 0}     \frac{1}{2}\int_{B^{\rho}} \left(\left|\kappa^\e(v^{\star}) 
- \overline Q	  \right|^2
	+ \lamm\operatorname{tr}^2(\kappa^\e(v^{\star}) )\right)dx'
	 +C_1 \frac{\delta^2_\e}{\delta\eta}
	 +C_2 \left( \rho + \frac{\delta}{\eta}\right)
	 +C_3 \frac{ \eta^2}{\rho}\\
\le\frac{1}{2}\int_{\o} \left( |\overline Q' |^2
+ 2| \tfrac{1}{2}{(\zeta^{[p]})'}-  \overline Q|^2 + (\zeta_3-\overline Q_{33})^2
+ \lamm
\zeta_3^2\right)dx'
	 +C_4   \rho,
\label{2007072236}
\end{multline}
where $B^\rho=\cup_j B^\rho_j$ and the $C_i$'s are positive constants for $i=1,..., 4$.
In the last line we have computed the limit as $\e\to 0$ choosing a diagonal sequence $\eta=\eta(\e)$ and
$\delta=\delta(\e)$ such that
$\frac{\eta(\e)}{\e}\to 0$
and $\frac{\delta(\e)}{\e}\to 0$ as $\e\to 0 $
for fixed $\rho>0$
and extended the integration domain from $B^\rho$ to $\Ob$ owing to the non-negativity of the local (additive) energy.
We finally pass to the limit two-dimensional domain $\o$ using the columnar structure of the integration domains along the recovery sequence.
Here, we use 
$ \zeta^{[p]}_{\alpha}$
for 
$ \zeta_{\alpha}^{[p]}\equiv  \zeta_{\alpha}$
if $p=0$
and 
$ \zeta^{[p]}_{\alpha}\equiv
0$
if $p\in (-1,0).$
Because $\rho$ is fixed and arbitrary the last contribution may be made arbitrarily small.
Finally, we are able to integrate over $x_3$ and read the results separately.
{Below, $\eta, \delta$ stand for $\eta(\e),\delta(\e)$.}

\begin{equation}\label{2012242050}
\limsup_{\e\to 0} 
J_b^{\e}(v^{\e,\eta},Q^{\eta,\delta};\O_b)\leq
 	\frac{1}{2}\int_{\o}\left( |\overline Q'|^2 + (2(\overline Qe_3)'|^2 + (\zeta_3-\overline Q_{33})^2+ \lamm\zeta_3^2\right)dx'
, \quad \text{ if } p\in (-1, 0)
 	\end{equation} 
 	and
\begin{equation}\label{2012251810}
\limsup_{\e\to 0} 
J_b^{\e}(v^{\e,\eta},Q^{\eta,\delta};\O_b)\leq
 	\frac{1}{2}\int_{\o} \left(|\overline Q'|^2 + 2|\frac{1}{2}\zeta'- (\overline Qe_3)'|^2 + (\zeta_3-\overline Q_{33})^2+ \lamm\zeta_3^2\right)dx'
, \quad \text{ if } p = 0.
 	 \end{equation}

{
Now, 
we can replace the piecewise constant $\overline{Q}$ first with a general $\overline{Q}\in L^2(\o,\mathcal{Q}_B)$,
in the right hand side of expression above thanks to the continuity of the energy and density properties of order tensors (Proposition~3 of~\cite{cesana2011nematic}).
Second, in place of the general  $\overline{Q}\in L^2(\o,\mathcal{Q}_B)$, } we choose the argmin $\overline{\overline{Q}}\in
L^2(\o,\mathcal{Q}_B)$ 
of the right-hand side 
(respectively, \eqref{2012242050} and \eqref{2012251810}).
The latter is unique owing to the convexity and compactness of $\mathcal{Q}_B$.
Using the characterisation $\int_{\o} \left(   |\overline{\overline{Q'}} |^2
+ 2| \frac{1}{2}{\zeta^{[p]}_{\alpha}}-  \overline{\overline{Q}}_{\alpha 3}|^2 + (\zeta_3-\overline{\overline{Q}}_{33})^2\right)dx'=
 \int_{\o} \operatorname{dist}^2(\mathsf K((\zeta^{[p]}_1,\zeta^{[p]}_2),\zeta_3), \mathcal{Q}_B) dx' $
and summing up film and nematic layer contributions, we obtain
\begin{multline}\label{2012241920}
	\Gamma\hbox{-}\limsup_{\e\to 0}
\Jep(u)
\leq\limsup_{\e\to 0}\left( \inf_{\overline Q\in L^2(\o,\mathcal{Q}_B)}
J_b^{\e}(v^{\e, \eta}_b,\overline Q ) + J_f^{\e}(v^{\e, \eta}) \right)
\leq\limsup_{\e\to 0} 
\left( J_b^{\e}(v^{\e, \eta},Q^{\eta, \delta}) + J_f^{\e}(v^{\e, \eta}) \right)  \\
\leq
\frac{1}{2}\int_\o 
\left[	 |e'(\zeta')|^2 - e'(\zeta')\nabla'\nabla' \zeta_3 + \frac{1}{3}|\nabla'\nabla' \zeta_3|^2 + 
	 \coefopt \left(\operatorname{tr}^2 e'(\zeta') - \operatorname{tr} e'(\zeta') \Delta' \zeta_3 + \frac{1}{3}(\Delta' \zeta_3)^2 \right) \right]dx'
\\
+\frac{1}{2}
 \int_{\o} \left[\operatorname{dist}^2\left(\mathsf K \left((\zeta^{[p]})',\zeta_3\right), \mathcal{Q}_B\right) + \lamm\zeta_3^2\right]dx'.
\end{multline}

\end{proof}

\vspace{1em}
\begin{multicols}{2}
\noindent \textbf{Pierluigi Cesana} \\
Institute of Mathematics for Industry, \\
Kyushu University\\
744 Motooka, Nishi-ku\\
Fukuoka 819-0395, Japan

\noindent e-mail: \url{cesana@math.kyushu-u.ac.jp}

\columnbreak

\noindent \textbf{Andrés A. León Baldelli} \\
Institute of Mechanical Sciences\\
and Industrial Applications (IMSIA)\\
CNRS UMR 9219, Palaiseau\\
France\\
\noindent e-mail: \url{leon.baldelli@cnrs.fr}

\end{multicols}

\end{document}